\newcommand{\beq}{\begin{equation}}
\newcommand{\eeq}{\end{equation}}
\newcommand{\ben}{\begin{eqnarray}}
\newcommand{\een}{\end{eqnarray}}
\newcommand{\beno}{\begin{eqnarray*}}
\newcommand{\eeno}{\end{eqnarray*}}
\newtheorem{thm}{Theorem}[section]
\newtheorem{lem}[thm]{Lemma}
\newtheorem{prop}[thm]{Proposition}
\newtheorem{rmk}[thm]{Remark}
\newcommand{\dif}{\mathrm{d}}
\newcommand{\me}{\mathrm{e}}
\newcommand{\mi}{\mathrm{i}}
\newcommand{\lag}{\langle}
\newcommand{\rag}{\rangle}
\begin{document}
\topmargin -9mm  \oddsidemargin -1mm
\title{\bf Periodic solutions to Klein-Gordon systems with linear couplings
\author{Jianyi Chen $^{a,}$\thanks{Corresponding author.
\newline\indent The reaserch was supported by National Natural Science Foundation of China (11701310, 11771428,12031015,12026217), Natural Science Foundation of Shandong Province (ZR2016AQ04), and the Research Foundation for Advanced Talents of Qingdao Agricultural University (6631114328).
\newline\indent
\textit{E-mail addresses}: chenjy@amss.ac.cn (J.Y. Chen); zzt@math.ac.cn (Z.T. Zhang); lucycgj@163.com (G.J. Chang); zhaojinglzbzj@163.com (J. Zhao).}\ ,\  \  Zhitao Zhang $^{b,c}$,
\ \ Guijuan Chang $^{a}$,\ \ Jing Zhao $^{a}$\\
{\small $^{a}$ Science and Information College, Qingdao Agricultural University, }\\
{\small Qingdao 266109, P. R. China
 }\\
{\small $^{b}$ Academy of Mathematics and Systems Science, Chinese Academy }\\
{\small  of Sciences, Beijing 100190, P. R. China;}\\
{\small   $^c$ School of Mathematical Sciences, University of Chinese  Academy  }\\
{\small of Sciences, Beijing 100049, P. R. China}
} }
\date{}
\maketitle

\renewcommand{\theequation}{\thesection.\arabic{equation}}
\setcounter{equation}{0}

\begin{abstract}
 In this paper, we study the  nonlinear Klein-Gordon systems arising from relativistic physics and quantum field theories
  $$\left\{\begin{array}{lll}
  u_{tt}- u_{xx} +bu + \varepsilon v + f(t,x,u) =0,\;\\
  v_{tt}- v_{xx} +bv + \varepsilon u + g(t,x,v) =0
  \end{array}\right.
  $$
 where $u,v$ satisfy  the Dirichlet boundary conditions on spatial interval $[0, \pi]$,  $b>0$ and $f$, $g$ are $2\pi$-periodic in $t$. We are concerned with the existence, regularity and asymptotic behavior of time-periodic solutions to the linearly coupled problem as $\varepsilon$ goes to 0. Firstly, under some superlinear growth and monotonicity assumptions on $f$ and $g$, we obtain the solutions $(u_\varepsilon, v_\varepsilon)$ with time-period $2\pi$ for the problem as the linear coupling constant $\varepsilon$ is sufficiently small, by constructing critical points of an indefinite functional via variational methods. Secondly, we give precise characterization for the asymptotic behavior of these solutions, and show that as $\varepsilon\rightarrow 0$, $(u_\varepsilon, v_\varepsilon)$ converge to the solutions of the wave equations without the coupling terms. Finally, by careful analysis which are quite different from the elliptic regularity theory, we obtain some interesting results concerning the higher regularity of the periodic solutions.
  \par\bigskip
  {\bf Keywords}:  Wave equations; Variational method; Klein-Gordon system; Periodic solutions.
  \par\medskip
 {\bf AMS Subject Classification (2010):} 35B10; 35L51; 58E30.
\end{abstract}

\renewcommand{\theequation}{\thesection.\arabic{equation}}
\setcounter{equation}{0}

\section{ Introduction}
\label{} \ \ \ \ \
    In this paper, we consider the important nonlinear Klein-Gordon system
\begin{equation*}
  \qquad\qquad\quad\ \left\{\begin{array}{l}
     u_{tt}- u_{xx} +bu + \varepsilon v + f(t,x,u) =0,\;\  t\in {\bf{\mathbb{R}}},\ x\in [0, \pi]
    \\[1.2ex]
     v_{tt}- v_{xx} +bv + \varepsilon u + g(t,x,v) =0,\;\  t\in {\bf{\mathbb{R}}},\ x\in [0, \pi]
 \end{array}\right.  \qquad\qquad\qquad  (1.1)_{a}
  \end{equation*}
  satisfying the Dirichlet boundary value conditions on the $x$-axis
  $$u(t,0)=u(t, \pi)=0, \;\ \ v(t,0)=v(t, \pi)=0, \;\ \  t\in \mathbb{R},\qquad \eqno (1.1)_{b}
  $$
 and periodic conditions with respect to the time variable $t$
 $$u(t+2\pi,x)=u(t,x),\;\ \ v(t+2\pi,x)=v(t,x),\;\ \  t\in \mathbb{R},\ x\in [0, \pi],  \eqno (1.1)_{c}
 $$
 where $u(t,x)$ and $v(t,x)$ are the relativistic wave functions generated by the interaction of two mass fields, $b>0$ and $\sqrt b$ stands for the mass; $\varepsilon$ denotes the strength of the fields coupling, and $\varepsilon$ is assumed to be sufficiently small. The nonlinear forced terms $f(t,x,u)$, $g(t,x,v)$ are $2\pi$-periodic in $t$. We study the existence, regularity and asymptotic behavior of time-periodic solutions to the linearly coupled problem (1.1)$_ {a, b, c}$ as $\varepsilon$ goes to 0. \par

   The coupled Klein-Gordon system in the general form
$$ u_{tt} - u_{xx} = H_{u}(u, v),\ \ \  v_{tt} - v_{xx} = H_{v}(u, v), \eqno {\rm (KG)}
$$
is deeply connected with many branches of mathematical physics, such as relativistic physics and quantum field theories. For instance, with the proper choice of the potential function $H(u, v)$, the system (KG) was used to describe the long-wave dynamics
of two coupled one-dimensional periodic chains in the bi-layer materials or the spinless relativisitic composite particles (see \cite{AkBaKh, KhPe}). Moreover, variations of such systems were also proposed in the work of Klainerman and Tataru \cite{KlTa} as important models to investigate the Yang-Mills equations under the Coulomb gauge condition. The solvability of (KG) depends upon the nature of the nonlinearities and the type of the boundary conditions. Many interesting theoretical and numerical results can be found in \cite{BerMus, FaXu, Kim, NiWa, Suna, WaIsWu, WuWaSh} and the monograph of Shatah-Struwe \cite{ShSt} which contains more extensive references.    \par
     It is an important work to study the existence and regularity of time-periodic solutions for the Dirichlet problem of (KG) with the gradient of the potential function $H(u, v)$ having the interesting coupling form
  $$ \nabla H(u,v) = \left(-bu - \varepsilon v - f(t,x,u),\ \ -bv - \varepsilon u - g(t,x,v) \right).
  $$
 When $\varepsilon =0$, Eq. (1.1)$_a$ are two copies of nonlinear wave equations
   $$ u_{tt} - u_{xx} + bu + f(t,x,u) =0,\;\ \ \ t\in {\bf{\mathbb{R}}},\ \ 0<x<\pi, \eqno \rm{(W1)}
   $$
   $$v_{tt} - v_{xx} + bv + g(t,x,v) =0,\;\ \ \ t\in {\bf{\mathbb{R}}},\ \ 0<x<\pi.     \eqno  \rm{(W2)}
   $$
\par
It is well known that even the existence of time-periodic solutions for single wave equation is difficult to study. Since the  seminal work \cite{Ra} of P. H. Rabinowitz, several tools in nonlinear analysis are developed by H. Br\'{e}zis, L. Nirenberg, J. M. Coron, K. C. Chang, J. Mawhin, M. Schechter, S. J. Li et al. to obtain the existence and multiplicity results of the periodic solutions for the scalar wave equations with various type of nonlinearities. We refer to [5-8, 14, 16, 17, 21, 25, 28, 29, 35]  for one dimensional problem, and [9-11, 26, 30, 31] for higher dimensional cases.
 The existence of solutions with time-period $T$ to such kinds of wave equations depends upon the nature of the parameter $b$, period $T$ and the nonlinearities.  All the above results require the crucial condition that $T/\pi$ is rational and sometimes take $T=2\pi$ for simplicity. When $T$ is an irrational multiple of $\pi$, we are led to the problems of small divisors which are difficult to deal with (see \cite{BeBo, Bou, Way} for examples).

 As $\varepsilon \neq 0$, the solvability of the system (1.1)$_ {a, b, c}$ is more complicated because of the presence of the linear coupling terms. We need a more delicate analysis to study the behavior of the interaction between the two linear coupling terms. Berkovits and Mustonen \cite{BerMus} used the topological degree theory and continuation principle to obtain at least one weak solution $(u, v)$ with time-period $2\pi$ for the system
  \begin{equation*}
 \qquad\qquad\ \  \left\{\begin{array}{l}
     u_{tt}- u_{xx}  + \lambda v + f(t,x,u) =h_1 (t,x),\;\  t\in {\bf{\mathbb{R}}},\ x\in [0, \pi]
    \\[1.2ex]
     v_{tt}- v_{xx} + \mu u + g(t,x,v) =h_2 (t,x),\;\  t\in {\bf{\mathbb{R}}},\ x\in [0, \pi]
 \end{array} \right.\qquad\qquad (KG)_{\lambda\mu}
  \end{equation*}
 where $\lambda\mu <0$, $h_1$, $h_2 \in L^2([0,2\pi]\times [0,\pi])$ and $f$, $g$ satisfy some linear growth
conditions. The assumption $\lambda\mu <0$ required in \cite{BerMus} plays a crucial role in calculating the degree and getting a priori
bounds of solutions for the corresponding homotopy equations. \par
 Recently, Yan, Ji and Sun \cite{YJS} used the change of degree argument to prove the existence of time-periodic weak solutions for some coupled Klein-Gordon systems with variable coefficients when the forced terms satisfy some sublinear conditions.

 To the best of our knowledge, little further progress has been made on the study of the existence and regularity of periodic solutions for (KG)$_{\lambda\mu}$ with superlinear forced terms.
  In the present work, we study such a superlinear problem and consider the situation $\lambda\mu >0$ by variational methods. We focus on the  case of $\lambda = \mu$ because the variational structure is required. Our results are in three aspects:\par
   $\bullet$ existence of the time-periodic weak solutions for (1.1)$_ {a, b, c}$,\par
    $\bullet$ asymptotic behavior of the weak solutions as $\varepsilon \rightarrow 0$,\par
    $\bullet$ higher regularity of the solutions.
\par\bigskip
Let $\Omega = [0, 2\pi]\times [0, \pi]$, we say that $(u, v)\in L^2(\Omega)\times L^2(\Omega)$ is a weak solution to the system (1.1)$_ {a, b, c}$ provided that
$$\iint_{\Omega}\big[u\big(\varphi_{tt}-\varphi_{xx}\big)+ bu\varphi + \varepsilon v\varphi +
 f(t,x,u)\varphi\big]~\dif t\dif x=0
 $$
and
  $$\iint_{\Omega}\big[v\big(\psi_{tt}-\psi_{xx}\big)+ bv\psi + \varepsilon u\psi +
 g(t,x,v)\psi\big]~\dif t\dif x=0
 $$
for all functions $\varphi$ and $\psi$ satisfying the conditions (1.1)$_ {b, c}$ and belonging to the space $H$ which is defined by (2.1) in Sect. 2.\par
  Let $\sigma(L)$ be the set of eigenvalues of the d'Alembert operator $L= \partial_{t}^2 - \partial_{x}^2$  subject to the conditions
(1.1)$_ {b, c}$, and denote by $\ker L$ the kernel of the operator $L$. It is well known that $\sigma(L)$ consists of the isolate numbers $\lambda_{jk}=j^2 -k^2$, for  $j \in \mathbb{Z}_{+}$ and $k \in \mathbb{Z}$. We see: \par
 (i) $0\in \sigma(L)$, and the multiplicity of eigenvalue $\lambda_{j_0 k_0}=0$ is infinite since there exists an infinite number of $j_0$ and $k_0$ such that $j_0^2 - k_0^2 =0$; that is, $\ker L$ is an infinite dimensional space; \par
 (ii) all the nonzero eigenvalues of $L$ are of finite multiplicity, and they tend to $+\infty$ or $-\infty$;\par
 (iii) for any number $b$ satisfying $-b\not\in \sigma(L)$, there exists a constant $\eta >0$ such that
   $$|\lambda_{jk} + b|\geq \eta, \ \ \forall~ j \in \mathbb{Z}_{+},\ k \in \mathbb{Z},  \eqno(1.2)
   $$
noting that the eigenvalues $\lambda_{jk}$ of the operator $L$ are isolated.\par\medskip
   {\bf (a) Existence and the asymptotic behavior of the solutions for (1.1)$_ {a, b, c}$}\par\medskip
 The first result of this paper is the following theorem concerning the existence of the time-periodic weak solutions for (1.1)$_ {a, b, c}$.

\begin{thm}\label{thmground}
Let $b>0$ and $-b\not\in \sigma(L)$, $f$, $g\in C (\Omega \times \mathbb{R},~\mathbb{R})$ are assumed to
be $2\pi$-periodic in $t$ and satisfy the following superlinear growth and monotonicity conditions {\rm(h1)}-{\rm(h4):}\par
{\rm(h1)} there exist $p>1$, $q>1$, and $c_0 >0$, such that for all $t$, $x$, $\xi$,
  $$|f(t,x,\xi)|\leq c_0 (1+|\xi|^p), \ {\rm{and}}\  |g(t,x,\xi)|\leq c_0 (1+|\xi|^q){\rm ;}
  $$
\indent {\rm(h2)}
 $f(t,x,\xi)=o(|\xi|)\ {\rm and}\ g(t,x,\xi)=o(|\xi|),\ \ {\rm as}\ \xi \rightarrow 0\ {\rm uniformly\ in}\ (t,x){\rm ;}
 $
 \\
\indent {\rm(h3) (Ambrosetti-Rabinowitz condition)}
$$(p+1) F(t,x,\xi) \leq f(t,x,\xi)\xi,\ {\rm and} \ (q+1) G(t,x,\xi) \leq g(t,x,\xi)\xi,
$$
for all $t$, $x$ and $\xi$, where $F(t,x,\xi)= \int_{0}^{\xi}f(t,x,s)\dif s$ and $G(t,x,\xi)= \int_{0}^{\xi}g(t,x,s)\dif s${\rm ;}\par
{\rm(h4) (Monotonicity)} $f(t,x,\xi)$ and $g(t,x,\xi)$ are nondecreasing in $\xi$.\par
  Then there exists $\varepsilon_0 >0$ such that for $|\varepsilon| < \varepsilon_0$, the system {\rm(1.1)}$_ {a, b, c}$ has at least one nontrivial weak solution $(u,v)\in L^2 (\Omega) \times L^2 (\Omega)$ with time period $2\pi$.
\end{thm}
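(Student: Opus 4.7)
The variational approach is to construct critical points of the indefinite functional
$$I_\varepsilon(u,v) = \iint_\Omega \Bigl[\tfrac{1}{2}\bigl(u\,Lu + v\,Lv\bigr) + \tfrac{b}{2}(u^2+v^2) + \varepsilon\,uv + F(t,x,u) + G(t,x,v)\Bigr]\,\mathrm{d}t\,\mathrm{d}x,$$
whose Euler--Lagrange equations are exactly (1.1)$_a$. The plan has three ingredients: a splitting of the ambient Hilbert space according to the spectral sign of $L+b$, a linking geometry forced by (h2)--(h3), and a compactness argument that neutralises the infinite-dimensional kernel of $L$ by means of the monotonicity (h4).

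First I would introduce the Hilbert space $E = D(|L+b|^{1/2})$, built from the eigenbasis $\{\phi_{jk}\}$ of $L$, with norm $\|u\|_E^2 = \sum_{j,k} |\lambda_{jk}+b|\,|u_{jk}|^2$; the spectral gap (1.2) guarantees that this is a genuine Hilbert norm. I would split $E = E^+ \oplus E^-$ by the sign of $\lambda_{jk}+b$; both summands are infinite-dimensional since $\sigma(L)$ accumulates at $\pm\infty$. On $\mathbb{E} := E \times E$ the bilinear coupling $\iint \varepsilon uv$ is a bounded symmetric perturbation, so for $|\varepsilon|$ small the quadratic part of $I_\varepsilon$ is still strongly indefinite with a slightly rotated splitting $\mathbb{E} = \mathbb{E}^+_\varepsilon \oplus \mathbb{E}^-_\varepsilon$. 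The linking geometry then follows in the usual way: (h2) gives $F,G = o(\xi^2)$ near $0$, so $I_\varepsilon \geq \alpha > 0$ on a small sphere inside $\mathbb{E}^+_\varepsilon$, while (h3) upgrades to $F \geq c|\xi|^{p+1} - c'$ and $G \geq c|\xi|^{q+1} - c'$, which drives $I_\varepsilon \to -\infty$ along every finite-dimensional direction of $\mathbb{E}^+_\varepsilon$. Combined with $I_\varepsilon \leq 0$ on $\mathbb{E}^-_\varepsilon$, this yields the Benci--Rabinowitz / Kryszewski--Szulkin linking configuration for strongly indefinite functionals, producing a candidate min-max value $c_\varepsilon > 0$.

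The delicate step is compactness. A Palais--Smale sequence $(u_n,v_n)$ at level $c_\varepsilon$ is bounded in $\mathbb{E}$ through the standard combination $I_\varepsilon'(u_n,v_n)(u_n,v_n) - (p+1)\,I_\varepsilon(u_n,v_n)$ together with (h3). However, because $\ker L$ lies inside $\mathbb{E}^+_\varepsilon$ (there $L+b = b > 0$) and is infinite-dimensional, the embedding $\mathbb{E} \hookrightarrow L^{p+1}(\Omega)^2$ fails to be compact and weak limits alone cannot identify the nonlinear terms. To overcome this I would run a Galerkin truncation onto the eigenspaces with $|\lambda_{jk}+b| \leq N$, apply a finite-dimensional linking theorem at each level $N$, and then invoke (h4) in the spirit of Coron: the monotonicity of $f$ and $g$ implies that weak convergence combined with convergence of the duality pairings $\iint f(u_n)u_n$ and $\iint g(v_n)v_n$ --- which are provided by the PS condition and the Ambrosetti--Rabinowitz identity --- forces strong convergence of $u_n$ in $L^{p+1}(\Omega)$ and of $v_n$ in $L^{q+1}(\Omega)$, and hence strong convergence in $\mathbb{E}$ after inverting $L+b$.

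The hard part is precisely this compactness step: the infinite-dimensional eigenspace of $L+b$ at the isolated positive value $b$ destroys the compact embedding one takes for granted in an elliptic problem, and one cannot sidestep it by the naive diagonalisation $(u+v,u-v)/\sqrt{2}$ since the nonlinearities $f(t,x,u)$ and $g(t,x,v)$ do not decouple under that rotation. The smallness assumption $|\varepsilon| < \varepsilon_0$ is used only to preserve the indefinite splitting $\mathbb{E}^\pm_\varepsilon$ and to keep the min-max level $c_\varepsilon$ uniformly bounded away from zero; the real work lies in the monotonicity-based compactness argument, which is exactly what hypothesis (h4) is designed to supply.
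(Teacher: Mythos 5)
Your general framework (spectral splitting of $L+b$, Galerkin approximation, monotonicity (h4) to compensate the non-compactness coming from the infinite-dimensional kernel) is in the same spirit as the paper, but your linking geometry contains a genuine sign error that breaks the argument. The nonlinearity enters $(1.1)_a$ as $+f(t,x,u)$, so in your functional $I_\varepsilon$ the superquadratic terms appear as $+\int F+\int G$ with $F,G\geq 0$ (a consequence of (h2)--(h4)). On $\mathbb{E}^+_\varepsilon$ (which contains $\ker L\times\ker L$, where $L+b=b>0$) the quadratic part of $I_\varepsilon$ is nonnegative up to a coupling term of size $|\varepsilon|$, so for $|\varepsilon|$ small one has $I_\varepsilon\geq 0$ on all of $\mathbb{E}^+_\varepsilon$; hence $I_\varepsilon$ cannot tend to $-\infty$ along any finite-dimensional direction of $\mathbb{E}^+_\varepsilon$, contrary to what you claim. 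Similarly, $I_\varepsilon\leq 0$ on $\mathbb{E}^-_\varepsilon$ holds only near the origin, and on the boundary of any Benci--Rabinowitz cylinder $\{u^-+te:\ \|u^-\|\leq R,\ 0\leq t\leq R\}$ with $e\in\mathbb{E}^+_\varepsilon$ the top satisfies $I_\varepsilon\geq ct^2-\tfrac12\|u^-\|^2$ and is strictly positive at $u^-=0$, since the terms $\int F,\int G$ push the functional up rather than down. So the Benci--Rabinowitz/Kryszewski--Szulkin linking configuration you invoke is simply not available for this sign structure; this problem is the mirror image of the classical superlinear case $Lu=f(u)$. The paper instead applies the Li--Liu/Li--Willem local linking theorem to $\Phi=-I_\varepsilon$ with the decomposition $E^1=E_b^-$, $E^2=E_b^+\oplus E^0$: one checks $\Phi\geq 0$ on a small ball of $E^1$, $\Phi\leq 0$ on a small ball of $E^2$, $\Phi$ maps bounded sets to bounded sets, $\Phi\to-\infty$ on $E^1_m\oplus E^2$, and a (PS)$^*$ condition along Galerkin subspaces; this is the geometry your hypotheses actually produce.

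Two further points. First, boundedness of (PS) sequences does not follow from the Ambrosetti--Rabinowitz identity alone: that identity bounds $\|u_n\|_{L^{p+1}}$, $\|v_n\|_{L^{q+1}}$ (hence the kernel components in $L^2$), but because the quadratic form is indefinite it gives no control of the $H$-norms of the components in $H_b^\pm$; these must be recovered by testing the derivative against $(u_n^+,v_n^+)$ and $(u_n^-,v_n^-)$ and using (h1) with the embedding $H_b^+\oplus H_b^-\hookrightarrow L^r(\Omega)$, as in the paper's Lemma 3.1 (also, with $p\neq q$ one should use the smaller exponent in the AR combination). Second, your remark that $|\varepsilon|<\varepsilon_0$ is needed ``only'' to preserve the splitting and the min-max level is inaccurate: smallness of $\varepsilon$ is essential in the compactness step itself, since the kernel components are controlled through the coefficient $b-|\varepsilon|>0$ after the monotonicity argument discards the nonlinear terms (cf. (3.18)), and it also enters the local linking inequalities through $|\varepsilon|<\min\{\eta,b\}$.
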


\indent
 From {\rm(h2)} we infer $f(t,x,0)=0$ and $g(t,x,0)=0$, which implies that $(u,v)=(0,0)$ is a solution for the system $(1.1)_ {a, b, c}$. We should point out that if $(u,v)$ satisfies $(1.1)_ {a}$ and $u\not\equiv 0$, then we also have $v\not\equiv 0$ due to the structure of the system $(1.1)_ {a}$. In other words, the problem $(1.1)_{a,b,c}$ possesses no semi-trivial solution of type $(u,0)$ or $(0,v)$.\par

\begin{rmk}\label{rmk1.3}
   By virtue of {\rm{(h1)-(h4)}}, an explicit computation shows some useful facts for proving {\rm Theorem 1.1}{\rm :} \par
  {\rm(i)} $F(t,x,\xi)\geq 0$, and $G(t,x,\xi)\geq 0$ for all $(t,x,\xi)${\rm .}\par
  {\rm(ii)} There are positive numbers $c_1$ and $c_2$, such that
   $$F(t,x,\xi)\geq c_1 |\xi|^{p+1}-c_2,\ \ G(t,x,\xi)\geq c_1 |\xi|^{q+1}-c_2{\rm ,}  \eqno (1.3)
   $$
   and there are constants $\bar{r}$, $c_3 >0$, such that
   $$F(t,x,\xi)\geq c_3 |\xi|^{p+1},\ \ G(t,x,\xi)\geq c_3 |\xi|^{q+1}{\rm ,} \ \  {\rm for}\ |\xi|\geq \bar{r}{\rm ;}
   $$
   \label{} \ \ \ \ furthermore, it follows from {\rm(h3)} and {\rm(h4)} that
   $$\lim\limits_{\xi\rightarrow +\infty} f(t,x,\xi)=\lim\limits_{\xi\rightarrow +\infty} g(t,x,\xi)= +\infty.
   $$
   \label{} \ \ \ \ {\rm(iii)} $F(t,x,\xi)/\xi^2 \rightarrow 0$, and $G(t,x,\xi)/\xi^2 \rightarrow 0$ uniformly in $(t,x)$ as $\xi\rightarrow 0${\rm ;} \qquad\quad\ {\rm(1.4)} \par
   moreover, for each $\nu >0$, there exists a positive number $C_\nu$ such that
   $$|F(t,x,\xi)| \leq \nu \xi^2 + C_\nu |\xi|^{p+1},\ \ {\rm and}\ |G(t,x,\xi)| \leq \nu \xi^2 + C_\nu |\xi|^{q+1}. \eqno (1.5)
   $$
\end{rmk}
   It is well known that such assumptions as (h1)-(h3) are also of great use in solving the nonlinear
elliptic equations and the linearly coupled elliptic systems
\begin{equation*}
  \left\{\begin{array}{l}
     -\Delta u  + u = f(u)+\lambda v ,\;\  x\in {\bf{\mathbb{R}}^N},
    \\[1.2ex]
     -\Delta v  + v = g(v)+\lambda u ,\;\  x\in {\bf{\mathbb{R}}^N},
 \end{array} \right.
  \end{equation*}
  (see \cite{ChZou, ChZou2, Wi} and the references therein). We should point out that, in contrast to the elliptic equations and systems, we may face the following difficulties in the problem of finding periodic solutions for the Klein-Gordon system (1.1)$_{a, b, c}$:\par
  1) The d'Alembert operator $L= \partial_{t}^2 - \partial_{x}^2$ possesses infinitely many eigenvalues  going from $-\infty$ to $+\infty$, so that the positive part and the negative part of the spectrum of $L$ are all {\bf infinite} dimensional spaces, and the functional $\Phi$ corresponding to (1.1)$_ {a, b, c}$ stated in Sect. 2 is neither bounded from above nor
from below. Furthermore, because of the kernel of $L$ is infinite dimensional, the operator $L$ and its inverse are not compact. This fact gives rise to considerable difficulties in solving the strong indefinite problem (1.1)$_ {a, b, c}$. As the lack of compactness properties, the embedding estimates and methods used in \cite{ChZou, ChZou2, Wi} are invalid here.\par
   2) Due to the linear coupling effects, it is hard  to obtain the energy estimate and convexity properties of the
corresponding functional $\Phi$ for the system (1.1)$_ {a, b, c}$. Many  troubles stem from the coupling interplay between the two scalar functions $u$ and $v$. The main challenges in constructing the time-periodic solutions for (1.1)$_ {a, b, c}$ are to control the energy of $\Phi$ in some proper working spaces, and to estimate the asymptotic behavior of the components in the kernel of $L$.\par\medskip
  Since the solutions for (1.1)$_ {a, b, c}$ obtained in Theorem 1.1 are dependent of $\varepsilon$, we are interested in considering the asymptotic behavior of these solutions as $\varepsilon\rightarrow 0$. In the next theorem, we prove:
 \begin{thm}
  Under the conditions of {\rm Theorem 1.1}, and assume $(u_\varepsilon, v_\varepsilon)$ is a solution of ${\rm(1.1)}_{a, b, c}$ obtained in {\rm Theorem 1.1} for $|\varepsilon|< \varepsilon_0$. Let $\varepsilon_n \in (-\varepsilon_0, \varepsilon_0)$ be any sequence with $\varepsilon_n \rightarrow 0$ as $n\rightarrow \infty$. Then, passing to a subsequence, $(u_{\varepsilon_n}, v_{\varepsilon_n})$ converge strongly to $(U_0, V_0)$ in $L^2 (\Omega) \times L^2 (\Omega)$ as $n\rightarrow \infty$, where $U_0$ is a weak solution of {\rm(W1)}, and $V_0$ is a weak solution of {\rm(W2)} respectively.
\end{thm}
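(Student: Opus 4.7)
The plan is to derive uniform bounds for $(u_\ep,v_\ep)$ as $\ep\to 0$, extract weak $L^2$ limits, upgrade to strong $L^2$ convergence by exploiting the spectral structure of $L$ together with the monotonicity (h4), and finally pass to the limit in the weak equation.

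First I would obtain uniform-in-$\ep$ bounds. The minimax construction that yields $(u_\ep,v_\ep)$ in Theorem 1.1 depends continuously on the small parameter $\ep$, so the critical values $c_\ep=\Phi_\ep(u_\ep,v_\ep)$ remain bounded on $(-\ep_0,\ep_0)$. Combining $c_\ep$ with the Euler--Lagrange identity $\Phi_\ep'(u_\ep,v_\ep)[(u_\ep,v_\ep)]=0$ and invoking (h3) controls the integrals $\iint_\Om F(t,x,u_\ep)\,\dif t\,\dif x$ and $\iint_\Om G(t,x,v_\ep)\,\dif t\,\dif x$, which by (1.3) bounds $u_\ep$ in $L^{p+1}(\Om)$ and $v_\ep$ in $L^{q+1}(\Om)$ uniformly; by (h1), $f(t,x,u_\ep)$ and $g(t,x,v_\ep)$ are then bounded in $L^{(p+1)/p}(\Om)$ and $L^{(q+1)/q}(\Om)$. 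Along a subsequence, still indexed by $n$, one obtains weak limits $u_{\ep_n}\rightharpoonup U_0$ and $v_{\ep_n}\rightharpoonup V_0$ in $L^2(\Om)$, and weak limits $\chi,\widetilde\chi$ for the corresponding nonlinear terms.

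The heart of the proof is upgrading to strong $L^2$ convergence. Decompose $u_{\ep_n}=u_{\ep_n}^\perp+u_{\ep_n}^0$ relative to $L^2(\Om)=(\ker L)^\perp\oplus\ker L$. Factoring $j^2-k^2=(j-k)(j+k)$ shows that each nonzero eigenvalue of $L$ has finite multiplicity and that $|\lam_{jk}|\to\infty$ off the kernel; together with the gap (1.2) this makes $(L+bI)^{-1}$ compact on $(\ker L)^\perp$. Applying it to
\[
(L+bI)u_{\ep_n}^\perp=-P^\perp\bigl[\ep_n v_{\ep_n}+f(t,x,u_{\ep_n})\bigr]
\]
and using weak convergence of the right-hand side yields strong $L^2$ convergence $u_{\ep_n}^\perp\to U_0^\perp$, and (along a further subsequence) pointwise a.e.\ convergence. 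The kernel projection of the equation collapses to the algebraic identity $b\,u_{\ep_n}^0=-P^0[\ep_n v_{\ep_n}+f(t,x,u_{\ep_n})]$, so strong convergence of $u_{\ep_n}^0$ is equivalent to strong convergence of $P^0 f(t,x,u_{\ep_n})$.

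The hard part will be precisely this kernel step, since $\ker L$ is infinite-dimensional and no Sobolev-type compactness is available there---this is exactly the difficulty emphasized in the introduction. To force strong convergence I would exploit the monotonicity (h4) through the Minty-type inequality
\[
\iint_\Om\bigl(f(t,x,u_{\ep_n})-f(t,x,w)\bigr)(u_{\ep_n}-w)\,\dif t\,\dif x\ \ge\ 0
\]
valid for any admissible test function $w$. The critical cross term $\iint_\Om f(t,x,u_{\ep_n})u_{\ep_n}\,\dif t\,\dif x$ would be re-expressed via the equation so that the indefinite operator $L$ contributes only through $u_{\ep_n}^\perp$, whose strong convergence is already in hand; letting $n\to\infty$ and then setting $w=U_0+\lam\va$, $\lam\to 0$, identifies $\chi=f(t,x,U_0)$. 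This forces $P^0 f(t,x,u_{\ep_n})\to P^0 f(t,x,U_0)$ in $L^2$, hence $u_{\ep_n}\to U_0$ strongly in $L^2$, and the same scheme gives $v_{\ep_n}\to V_0$. Since $\ep_n v_{\ep_n}\to 0$ in $L^2$ and $f(t,x,u_{\ep_n})\to f(t,x,U_0)$ in $L^{(p+1)/p}$ (via a.e.\ convergence, (1.5), and dominated convergence), sending $n\to\infty$ in the weak formulation of (1.1)$_a$ shows that $U_0$ is a weak solution of (W1) and, symmetrically, $V_0$ is a weak solution of (W2).
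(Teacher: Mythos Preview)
Your overall strategy---uniform bounds, spectral decomposition, compactness on $(\ker L)^\perp$, and monotonicity for the kernel component---matches the paper's, but the order and execution of the kernel step contain a real gap.

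The problematic sentence is ``This forces $P^0 f(t,x,u_{\ep_n})\to P^0 f(t,x,U_0)$ in $L^2$.'' Identifying the weak limit $\chi=f(t,x,U_0)$ via Minty does \emph{not} by itself upgrade weak convergence of $f(\cdot,u_{\ep_n})$ (or of its kernel projection) to strong convergence; you give no argument for this, and it is exactly the hard point. Moreover, in the Minty step itself the cross term $\iint f(t,x,u_{\ep_n})u_{\ep_n}$, rewritten via the equation, does not reduce to $u_{\ep_n}^\perp$ alone: the $b$-part of $L+b$ contributes $b\|u_{\ep_n}^0\|_{L^2}^2$, whose limit you do not control without already knowing strong convergence of the kernel part (this can be salvaged by weak lower semicontinuity and the correct inequality direction, but you do not address it). Finally, your last sentence---deducing that $U_0$ solves (W1) from strong convergence of $f(\cdot,u_{\ep_n})$---is redundant/circular, since a successful Minty argument already yields that conclusion.

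The paper avoids this circularity by reversing your order. It first proves strong convergence of the kernel components $y_{\ep_n}=P^0 u_{\ep_n}$ \emph{directly} (Lemma~4.1(iii)), by testing the equation against $(y_{\ep_n}-U_0^0,\,z_{\ep_n}-V_0^0)$ to obtain, after orthogonality and weak convergence,
\[
(b-|\ep_n|)\bigl(\|y_{\ep_n}-U_0^0\|_{L^2}^2+\|z_{\ep_n}-V_0^0\|_{L^2}^2\bigr)\le -\int f(t,x,u_{\ep_n})(y_{\ep_n}-U_0^0)-\int g(t,x,v_{\ep_n})(z_{\ep_n}-V_0^0)+o(1),
\]
and then splits each nonlinear integral into three pieces as in (3.19): one is nonnegative by monotonicity (h4), one vanishes by strong $L^{p+1}$-convergence of $u_{\ep_n}^\perp$ via the compact embedding (2.9), and one vanishes by weak $L^{p+1}$-convergence of $y_{\ep_n}$. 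Only \emph{after} strong convergence in $E$ is established does the paper run the Minty argument (Lemma~4.2 and (4.19)--(4.20)) to pass to the limit in $\lag (L+b)u_{\ep_n},\,u_{\ep_n}-\va\rag$ and conclude that $U_0$ solves (W1). That direct three-term splitting on the kernel is the missing idea in your proposal.
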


   {\bf (b) Regularity results}\par\medskip
   Another problem that we study is the higher regularity of the solutions to the system (1.1)$_ {a, b, c}$.
   We obtain the $L^\infty$ bound of the periodic solutions for (1.1)$_ {a, b, c}$ basing on some precise descriptions of the energy estimates.

\begin{thm}
  Suppose that the conditions of {\rm Theorem 1.1} are satisfied, then there exists $\varepsilon_0 >0$ such that for $|\varepsilon| < \varepsilon_0$, the solution of {\rm(1.1)}$_ {a, b, c}$ lies in $L^\infty (\Omega) \times L^\infty (\Omega)$, where $L^\infty (\Omega)$ is the Lebesgue space equipped with the norm
    $$\|w\|_{L^\infty} = \inf\{C\geq 0:\ |w(t,x)|\leq C\ {\rm for\ almost\ every}\ (t, x)\in \Omega\}<\infty.
    $$
\end{thm}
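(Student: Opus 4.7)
The plan is to rewrite the system as a fixed-point equation using $K_b:=(L+b)^{-1}$, which exists as a bounded operator on $L^2(\Omega)$ with $\|K_b\|\leq 1/\eta$ by (1.2), and then bootstrap the integrability of $(u_\varepsilon,v_\varepsilon)$. Concretely,
\[
u_\varepsilon = -K_b\bigl(\varepsilon v_\varepsilon + f(t,x,u_\varepsilon)\bigr),\qquad v_\varepsilon = -K_b\bigl(\varepsilon u_\varepsilon + g(t,x,v_\varepsilon)\bigr),
\]
so all the work lies in showing that the right-hand sides, viewed through $K_b$, yield $L^\infty$ output. The starting regularity comes from the variational construction in Theorem~1.1: since $(u_\varepsilon,v_\varepsilon)$ is a critical point of the indefinite functional $\Phi_\varepsilon$ at a level $c_\varepsilon$ that is uniformly bounded in $\varepsilon$, the standard Ambrosetti--Rabinowitz computation, combining $\Phi_\varepsilon-\tfrac{1}{2}\langle\Phi'_\varepsilon,(u_\varepsilon,v_\varepsilon)\rangle$ with (h3), furnishes a uniform bound $\iint[f(t,x,u_\varepsilon)u_\varepsilon+g(t,x,v_\varepsilon)v_\varepsilon]\,\dif t\dif x\leq C$. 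Via the lower bound (1.3) in Remark~1.2 this converts into $\|u_\varepsilon\|_{L^{p+1}(\Omega)}+\|v_\varepsilon\|_{L^{q+1}(\Omega)}\leq C$, and hence by (h1), $\varepsilon v_\varepsilon+f(\cdot,\cdot,u_\varepsilon)\in L^{(p+1)/p}(\Omega)$ uniformly in $\varepsilon$, with an analogous bound for the $v$-equation.

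The second and decisive step is to analyse $K_b$ on the orthogonal decomposition $L^2(\Omega)=\ker L\oplus(\ker L)^\perp$, since the infinite dimensionality of $\ker L$ is precisely what obstructs a direct elliptic bootstrap. On $(\ker L)^\perp$ the spectral multiplier $(\lambda_{jk}+b)^{-1}$ is bounded by $1/\eta$ and decays as $|\lambda_{jk}|\to\infty$; grouping eigenvalues by the integer value $m=\lambda_{jk}\neq 0$ and using the mild divisor-type bound on eigenvalue multiplicities, one checks that the restricted Green function lies in $L^2(\Omega\times\Omega)$. Schur's test then gives $K_b|_{(\ker L)^\perp}\colon L^2(\Omega)\to L^\infty(\Omega)$, and interpolation with the trivial $L^2\to L^2$ bound yields an effective smoothing $L^r\to L^{r^*}$ with $r^*>r$ that iterates to push $(I-P)u_\varepsilon$ and $(I-P)v_\varepsilon$ into $L^\infty(\Omega)$ in finitely many steps, where $P$ denotes the orthogonal projection onto $\ker L$. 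On $\ker L$ the operator $K_b$ is simply multiplication by $1/b$; projecting the equations there gives
\[
bPu_\varepsilon+\varepsilon Pv_\varepsilon+Pf(\cdot,\cdot,u_\varepsilon)=0,\qquad bPv_\varepsilon+\varepsilon Pu_\varepsilon+Pg(\cdot,\cdot,v_\varepsilon)=0,
\]
and, since elements of $\ker L$ are $2\pi$-periodic travelling waves of the form $\phi(t+x)-\phi(t-x)$ depending on a single characteristic variable, a Moser-type test-function iteration using the monotonicity (h4), together with the $L^\infty$ estimate already obtained on the complementary part, promotes $Pu_\varepsilon$ and $Pv_\varepsilon$ to $L^\infty(\Omega)$.

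Combining the two components yields $(u_\varepsilon,v_\varepsilon)\in L^\infty(\Omega)\times L^\infty(\Omega)$ for every $|\varepsilon|<\varepsilon_0$. The principal obstacle, flagged already in the introduction, is the infinite-dimensional, non-compact nature of $L$ and $K_b$: the d'Alembertian does not smooth on its kernel, so elliptic bootstrap techniques fail outright. The wave-specific replacements that power the argument are (i) the genuine smoothing of $K_b$ on $(\ker L)^\perp$ supplied by the spectral gap (1.2), (ii) the reduction of the kernel component to a one-characteristic-variable problem, and (iii) the monotonicity (h4), which substitutes here for the maximum principles used in elliptic theory.
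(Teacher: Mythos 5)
Your proposal has two genuine gaps, and both occur exactly where the paper has to work hardest. First, the bootstrap for the component in $(\ker L)^\perp$ does not close. Your spectral/Schur argument can indeed give $K_b|_{(\ker L)^\perp}\colon L^2(\Omega)\to L^\infty(\Omega)$ (and by duality $L^1\to L^2$; note that interpolating $L^2\to L^\infty$ against $L^2\to L^2$, as you wrote, gives nothing for exponents below $2$), but it cannot give $L^1\to L^\infty$, since $\sum_{j\neq|k|}|j^2-k^2+b|^{-1}$ diverges. Now the data you feed into $K_b$ is $\varepsilon v_\varepsilon+f(t,x,u_\varepsilon)$, whose integrability is capped at $L^{(p+1)/p}$ by the \emph{kernel} component $Pu_\varepsilon$, which your scheme only treats afterwards; one application of the $L^1\to L^2$/$L^2\to L^\infty$ scale then lands $(I-P)u_\varepsilon$ in some $L^s$ with $s<\infty$, and no further iteration improves matters because $u_\varepsilon=(I-P)u_\varepsilon+Pu_\varepsilon$ is not improved. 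So the claim that finitely many steps push $(I-P)u_\varepsilon$ into $L^\infty$ before the kernel part is controlled is circular. The paper avoids this entirely: Proposition 5.1 (the Br\'ezis--Coron--Nirenberg representation formula) gives the sharp estimate $\|w_1\|_{L^\infty}\leq c\|h\|_{L^1}$ for the range component, so $\|u_1\|_{L^\infty},\|v_1\|_{L^\infty}$ are bounded in one shot from $f(\cdot,\cdot,u_\varepsilon),\,g(\cdot,\cdot,v_\varepsilon)\in L^1$ (Lemma 5.2); this $L^1\to L^\infty$ bound is a consequence of the explicit integral kernel and is not recoverable from square-summability of the multiplier.

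Second, the treatment of the kernel components is not an argument. A ``Moser-type test-function iteration'' has no traction on $\ker L$: there is no ellipticity and no Sobolev gain there (this is precisely the obstruction), and elements of $\ker L$ are not functions of a single characteristic variable but differences $p(t+x)-p(t-x)$. What the paper actually does (Lemmas 5.3--5.4 and the proof of Theorem 1.4) is: write $y=p(t+x)-p(t-x)$, $z=q(t+x)-q(t-x)$, evaluate the range-compatibility condition (5.4) at points $s_1,s_2$ where $p,q$ are nearly extremal, use (h4) together with $f,g\to+\infty$ (Remark 1.2) and a measure estimate ${\rm meas}\,\Lambda_i\geq\pi/3$ to force a contradiction if $\|p\|_{L^\infty}$ or $\|q\|_{L^\infty}$ were infinite, and --- the genuinely new difficulty here --- absorb the coupled terms $\varepsilon q(s_1)$ and $\varepsilon p(s_2)$ by adding the two inequalities (5.14)--(5.15) under the explicit restriction $|\varepsilon|<b/2$. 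Your proposal invokes smallness of $\varepsilon$ but gives no mechanism for controlling this interaction between the two kernel profiles $p$ and $q$, which is the step that distinguishes the system from the scalar results of Rabinowitz and Br\'ezis--Nirenberg. The projected equations you wrote down are the right starting point, but as it stands both halves of your plan need the paper's machinery (the representation theorem and the pointwise/measure argument on $\ker L$) to be completed.
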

 Under some more
restrictive assumptions on $f$ and $g$, we have
\begin{thm}
   Under the conditions of {\rm Theorem 1.1}, we assume in addition that $f$,
  $g\in C^1 (\Omega \times \mathbb{R},~\mathbb{R})$,
   and $f(t,x,\xi)$, $g(t,x,\xi)$ are strictly increasing in $\xi$ for all $(t,x)\in \Omega$.\par
   Then for sufficiently small  $\varepsilon$, the solution of ${\rm(1.1)}_{a, b, c}$ is continuous on $\Omega$.
\end{thm}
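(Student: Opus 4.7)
The starting point is Theorem 1.3, which gives $u_\varepsilon, v_\varepsilon \in L^\infty(\Omega)$. Since $f,g \in C^1$, the composite maps $f(t,x,u_\varepsilon), g(t,x,v_\varepsilon)$ and their pointwise $\xi$-derivatives $f_\xi(t,x,u_\varepsilon(t,x)), g_\xi(t,x,v_\varepsilon(t,x))$ all lie in $L^\infty(\Omega)$, and the strengthened strict monotonicity forces $f_\xi, g_\xi \geq 0$ along the solution graphs. Write $(u,v)=(u_\varepsilon,v_\varepsilon)$. The plan is to promote the $L^\infty$ bound to Lipschitz continuity in each of the variables $t$ and $x$ separately, which forces continuity on $\Omega$.

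The main tool is a finite-difference argument. For $|h|$ small, set $w_h(t,x)=u(t+h,x)-u(t,x)$ and $z_h(t,x)=v(t+h,x)-v(t,x)$. Using $f \in C^1$, write
\begin{align*}
f(t+h,x,u(t+h,x))-f(t,x,u(t,x)) = a_h(t,x)\,w_h(t,x) + r_h(t,x),
\end{align*}
with $a_h(t,x)=\int_0^1 f_\xi(t+h,x,u(t,x)+\theta w_h(t,x))\,\dif\theta \geq 0$ uniformly bounded and $\|r_h\|_\infty \leq C|h|$ obtained from the continuity of $f_t$ on the compact graph of $u$; analogously define $b_h\geq 0,s_h$ from $g$. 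Then $(w_h,z_h)$ solves the linear system
\begin{align*}
(L+b+a_h)w_h + \varepsilon z_h = -r_h, \qquad (L+b+b_h)z_h + \varepsilon w_h = -s_h.
\end{align*}
Decompose along $L^2(\Omega)=\ker L \oplus (\ker L)^\perp$ via the orthogonal projector $P_0$. On $(\ker L)^\perp$ the spectral gap (1.2) makes $L+b$ boundedly invertible, so a Neumann-type inversion of the $P^\perp$-projected pair (valid for $|\varepsilon|<\varepsilon_0$ chosen small against $\|a_h\|_\infty,\|b_h\|_\infty$) gives $\|w_h^\perp\|_{L^2}+\|z_h^\perp\|_{L^2} \leq C|h|$. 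Since $P_0 L=0$, the $P_0$-projected equations reduce to the nonlocal pair
\begin{align*}
b(w_h)_0 + \varepsilon (z_h)_0 + P_0\bigl(a_h w_h + r_h\bigr) = 0, \qquad b(z_h)_0 + \varepsilon (w_h)_0 + P_0\bigl(b_h z_h + s_h\bigr) = 0.
\end{align*}
The nonnegativity of the quadratic forms $\langle P_0(a_h\,\cdot\,),\cdot\rangle,\langle P_0(b_h\,\cdot\,),\cdot\rangle$ combined with $b-|\varepsilon|>0$ makes the $2\times 2$ block coercive, and the explicit d'Alembertian parametrization of $\ker L$ by traveling waves $p(t+x)-p(t-x)$ upgrades the resulting $L^2$ bound to an $L^\infty$ bound. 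Combining, $\|w_h\|_\infty+\|z_h\|_\infty \leq C|h|$, so $u,v$ are Lipschitz in $t$; the parallel argument with differences in $x$ gives Lipschitz in $x$, hence $u,v \in C(\Omega)$.

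The main obstacle is upgrading from an $L^2$ to an $L^\infty$ estimate on the kernel components $(w_h)_0,(z_h)_0$. As emphasized in the introduction, $\ker L$ is infinite-dimensional and $L^{-1}$ is not compact, so elliptic-style bootstrap fails; the strict monotonicity of $f,g$ is precisely the tool that converts the projected equation on $\ker L$ from underdetermined to uniquely solvable, and it is for this reason that (h4) must be strengthened here. The threshold $\varepsilon_0$ must be re-chosen small enough (in terms of $\|f_\xi\|_\infty,\|g_\xi\|_\infty$ and the norm of $P_0$) to preserve the coercivity of the kernel block against the $\varepsilon$-coupling between $u$ and $v$.
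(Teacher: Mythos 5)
Your skeleton (start from the $L^\infty$ bound, split along $\ker L \oplus (\ker L)^\perp$, handle the regular part by the representation formula and the kernel part by estimating translates $p(t+h)-p(t)$ using monotonicity) is the right one, and is in spirit the paper's. But the two steps you assert for the kernel component are exactly where the real work lies, and as stated they do not go through. First, the ``Neumann-type inversion'' of $(L+b+a_h)$ on $(\ker L)^\perp$ is not available: $L+b$ is strongly indefinite there with spectral gap only $\eta$ from (1.2), and $\|a_h\|_{L^\infty}\sim \sup f_\xi$ on the relevant compact set is in no way small compared to $\eta$; nonnegativity of $a_h$ does not help for an indefinite operator, and smallness of $\varepsilon$ is irrelevant to this obstruction. (The paper sidesteps this entirely: since $bu+\varepsilon v+f(t,x,u)\in L^\infty$, estimate (5.2) for the representation formula gives $u_1,v_1\in C^{0,1}$ directly, which is both simpler and stronger than your $L^2$ bound.) Second, and more seriously, the claim that ``the explicit d'Alembertian parametrization of $\ker L$ by traveling waves upgrades the resulting $L^2$ bound to an $L^\infty$ bound'' has no content: $\ker L$ is infinite dimensional, an $L^2$ bound on $p$ gives no $L^\infty$ information, and a pointwise bootstrap through $b(w_h)_0=-\varepsilon(z_h)_0-P_0(a_hw_h+r_h)$ fails because $\|a_h\|_\infty$ cannot be absorbed by $b$. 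The paper's actual mechanism is quite different: it evaluates the range condition (5.4) at a near-maximizer $s_1$ of $\hat p_h$, introduces the monotone minorants $\phi,\psi$, proves the integral estimates (6.3)--(6.4) and the measure lower bounds $\mathrm{meas}\,\Sigma_i>\pi/5$, and concludes $\phi(M_1/2)+\psi(M_2/2)\le C|h|$. Nothing in your proposal substitutes for this pointwise/measure-theoretic argument, which is the heart of the proof.

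A further sign that the argument cannot be repaired as written is that your conclusion overshoots the theorem: you claim $\|w_h\|_\infty\le C|h|$, i.e.\ Lipschitz regularity, but the hypotheses give only strict monotonicity of $f,g$, with no positive lower bound on $f_\xi,g_\xi$ (these may vanish, e.g.\ at $\xi=0$ by (h2)). The paper's final inequality $\phi(M_1/2)\le C|h|$ therefore yields only $M_1\to 0$, i.e.\ a modulus of continuity governed by $\phi^{-1}$, not a Lipschitz bound; with $f\sim\xi^3$ near $0$ the method gives roughly $M_1\lesssim |h|^{1/3}$. Also, your ``parallel argument with differences in $x$'' is delicate because $x$-translation is incompatible with the Dirichlet conditions; it is in fact unnecessary, since continuity of the single-variable profiles $p,q$ plus $u_1,v_1\in C^{0,1}$ already gives joint continuity via $y=p(t+x)-p(t-x)$, $z=q(t+x)-q(t-x)$. (Minor point: the $L^\infty$ bound you invoke is Theorem 1.4, not 1.3.)
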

\indent  Our idea for this theorem is motivated by the works of Rabinowitz \cite{Ra, Ra1} and Br\'{e}zis-Nirenberg \cite{Bre-Nir} which are devoted to the regularity of the solutions for scalar wave equations $w_{tt}-w_{xx}=h(t,x,w)$. The above authors decompose $w$ into a regular term and a null term via the representation theorem (see Lemma 2.13 in \cite{Ra1}). The regular term can be formulated as an integral expression which is continuous in $t$ and $x$. On the other hand, the null term can be controlled by some delicate integral and pointwise estimates, and then the $C^0$-regularity of the solution is guaranteed by a continuity argument.\par
  However, due to the presence of the
linear coupling terms, the previous estimates developed in \cite{Bre-Nir, Ra, Ra1} can not be applied directly to study the higher regularity of the solutions for system (1.1)$_ {a, b, c}$. Especially, the interplay among the linear coupling {\emph {null terms}} is hard to control. To get around this difficulty, a careful calculation and some more precise energy estimates such as (6.3) and (6.4) are required to analyze the interaction between the linear coupling {\emph {null terms}}, provided that $\varepsilon$ is sufficiently small.\par\bigskip

  We conclude this section by illustrating our strategies to tackle the above problems and listing the sketch of the proof of the main results. \par
   First, we prove the existence of the weak solutions for (1.1)$_ {a, b, c}$ by constructing the critical points of the functional $\Phi$ defined by (2.2) restricted in some suitable function spaces, via the local linking method introduced by Shujie Li-Jiaquan Liu \cite{LiLiu, LiuLi}(see also \cite{DL, LiWi, Zhang}). The estimates of the components $(u,v)$ in different parts of the function space $E$ are playing crucial roles in solving this problem.\par
  $\bullet$ For $(u,v)$ that belongs to the orthogonal complement of $\ker  L\times \ker  L$ in $E$, we can control these components by
some compact embedding estimate (2.9); \par
  $\bullet$ For $(u,v)\in \ker L\times \ker L$, the compact properties will be lost. To overcome this difficulty, we apply the
monotonicity technique to analyze the behavior of the nonlinear terms more precisely, as the linear coupling constant $\varepsilon$ is sufficiently small. We require the condition (h4) of Theorem 1.1 to obtain this goal. \par
  Subsequently, we study the asymptotic properties of the solutions constructed in Theorem 1.1 as $\varepsilon\rightarrow 0$. We establish Lemma 4.1 to obtain the uniform bound of the solutions $(u_\varepsilon, v_\varepsilon)\in E$ to (1.1)$_ {a, b, c}$ for any $|\varepsilon|<\varepsilon_0$, which leads to the strong convergence of $(u_\varepsilon, v_\varepsilon) \rightarrow (U_0, V_0)$ in $L^2(\Omega) \times L^2(\Omega)$ as $\varepsilon\rightarrow 0$. Then, we verify that $U_0$, $V_0$ are weak solutions of (W1) and (W2) respectively, by a limiting argument with the aid of some precise energy estimates $(4.9)$-$(4.13)$ in Sect. 4 to approach it. \par
   Finally,  we will improve the regularity of the weak solutions. We carry out the proof by two steps:\par
  S{\scriptsize{TEP}} 1: With the help of the presentation theorem for periodic solutions to the scalar wave equations (\cite{Bre-Cor-Nir, Ra1}), we can use some a-priori estimates and comparison methods to achieve the $L^\infty$-estimate of the solutions $(u,v)$ for (1.1)$_ {a, b, c}$ constructed by Theorem 1.1. The proof depends on the linear coupling structure of the system. See Sect. 5 for details.\par
  S{\scriptsize{TEP}} 2: Relying on the nature of the nonlinearities and the above $L^\infty$-estimate, we can generalize the continuity
method \cite{Bre-Nir, Ra1} used in the scalar wave equations to the case of system (1.1)$_ {a, b, c}$ taking account of the linear coupling effect. For  sufficiently small $\varepsilon$, the integral estimates in \cite{Bre-Nir} are improved  to prove the higher regularity of the solutions (see Sect. 6).  \par\medskip
   We organize the paper as follows. In Sect. 2, we give the functional scheme and define a
suitable function space $E$ to work in it, with the aid of
Fourier expansion formulations for the functions that satisfy (1.1)$_ {b, c}$. Then we introduce a decomposition of $E$ and prepare some basic embedding properties, which enable us to solve (1.1)$_ {a, b, c}$ conveniently. Sect. 3 is devoted to proving Theorem 1.1  via the local linking method. One of the major ingredients in the proof is to verify the (PS)$^*$ condition by a compact argument together with a monotonicity technique (see Lemma 3.1 and Lemma 3.2). Then, in Sect. 4, we investigate the limit behavior of the solutions for (1.1)$_ {a, b, c}$ and prove Theorem 1.3. At last, we turn to study the  further regularity properties of the time-periodic solution for (1.1)$_ {a, b, c}$  and prove Theorem 1.4, Theorem 1.5  in Sect. 5 and Sect. 6 respectively.


\section{The variational framework}
\setcounter{equation}{0}
\label{} \ \ \ \ \ In this section, we present the variational framework which will be used to solve the system (1.1)$_ {a, b, c}$. First, we define the energy functional and its working space as following.

    \subsection{Functional Setting}
  \label{} \ \ \ \   Using the Fourier series, the solutions to the linear equation
  $$\phi_{tt}-\phi_{xx}=h(t,x),\ \ 0<t<2\pi,\ 0<x<\pi,
  $$
   with conditions of $\phi(t,0)=\phi(t,\pi)=0$ and $\phi(t+2\pi,x)=\phi(t,x)$ have a expansion of the form
  $$\phi(t,x)=\sum_{j\in \mathbb{Z_+},~k\in \mathbb{Z}} a_{jk}\sin(jx)\me^{\mi kt},\ \ \ {\rm where}\ \ \overline{a_{jk}}=a_{j, -k}.
  $$
 Then, for $u(t,x)=\sum\limits_{j\in \mathbb{Z_+},~k\in \mathbb{Z}} u_{jk}\sin(jx)\me^{\mi kt}$ and $v=\sum\limits_{j\in \mathbb{Z_+},~k\in \mathbb{Z}}v_{jk}\sin(jx)\me^{\mi kt}$, the inner product in $L^2 (\Omega)$ can be formulated by
   \begin{eqnarray*}
   \lag u, v\rag = \iint_{\Omega}u(t,x)\overline{v(t,x)}\dif t\dif x = \pi ^2 \sum_{j\in \mathbb{Z_+},~k\in \mathbb{Z}} u_{jk}~\overline{v_{jk}},
   \end{eqnarray*}
 and we can write the following quadratic form as
  $$\lag Lu, v\rag = \iint_{\Omega}(\frac{\partial^2 u}{\partial t^2}- \frac{\partial^2 u}{\partial x^2})~\overline{v}~\dif t\dif x
  = \pi ^2 \sum_{j\in \mathbb{Z_+},~k\in \mathbb{Z}} (j^2 - k^2)u_{jk}~\overline{v_{jk}}.
  $$
  \label{} \ \ \ \ \   Motivated by \cite{DL}, it is natural to introduce the Hilbert spaces
  $$\  H=\Big\{u\in L^{2}(\Omega):~\|u\|_{H}^{2}= \pi^2 \sum_{\substack{j\in \mathbb{Z_+},k\in \mathbb{Z}\\j\neq |k|}}
|j^2 -k^2 + b||u_{jk}|^{2} + \pi^2 \sum_{\substack{j\in \mathbb{Z_+},k\in \mathbb{Z}\\j= |k|}}
|u_{jk}|^{2} < \infty  \Big\}, \  \eqno (2.1)
  $$
 and $E=H\times H$ as our working spaces, where $E$ is equipped with the norm $\|(u,v)\|_{E} = (\|u\|_{H}^{2} + \|v\|_{H}^{2})^{1/2}$.\par
   For $(u,v)\in E=H\times H$, let
  \begin{eqnarray*}
 \ \ \ \ \ \ \ \ \ \ \Phi(u,v)&=& -\frac{1}{2}\lag Lu, u\rag - \frac{b}{2}\iint_{\Omega} u^2 \dif t\dif x -\frac{1}{2}\lag Lv, v\rag - \frac{b}{2}\iint_{\Omega} v^2 \dif t\dif x\\
     && -\varepsilon \iint_{\Omega} uv \dif t\dif x -\iint_{\Omega} F(t,x,u) \dif t\dif x -\iint_{\Omega} G(t,x,v) \dif t\dif x. \ \ \ \ \ \ \ (2.2)
  \end{eqnarray*}
  {\bf In the rest of this paper, we denote by $\int \cdot = \iint_{\Omega}\ \cdot\ \dif t\dif x$ for convenience}.\par
   Thus $\Phi$ is a $C^{1}$ functional on $E$, and the  Gateaux derivative of $\Phi$ is
\begin{eqnarray*}\label{3.6}
\ \ \ \lag \Phi'(u,v),~(\varphi, \psi)\rag &=& -\lag Lu,~\varphi\rag - b\int u\varphi -\varepsilon \int  v\varphi -~\lag Lv,~\psi\rag - b\int v\psi -\varepsilon \int  u\psi \\
 && -\int f(t,x,u)\varphi - \int~g(t,x,v)\psi,~\ \ \ \ \forall~(u,v), (\varphi,\psi)\in E. \ \ \ \ (2.3)
 \end{eqnarray*}
 Then $(u,v)$ is a weak solution of system $(1.1)_{a,b,c}$ if and only if $\Phi'(u,v)=0$.

   \subsection{Local linking structure}
  \label{} \ \ \ \   The notion of local linking introduced by S.J. Li and M. Willem \cite{LiWi} is a powerful tool to study the existence of critical points for strongly indefinite
 functionals.\\[0.3em]
   \textit{{\bf Definition 2.1.}}
     Let $E$ be a Banach space, and $E=E^1 \oplus E^2$ is a direct sum decomposition of $E$ (noting that both of $E^1$ and $E^2$ may
 be infinite dimensional spaces). Then $\Phi \in C^1 (E, {\mathbb R})$ is said
 to have a local linking at 0 if for some $r>0$,
 \begin{eqnarray*}
   \Phi(u)\geq 0, && {\rm for}~u\in E^1 ,\ \|u\| \leq r,  \\
   \Phi(u)\leq 0, && {\rm for}~u\in E^2 ,\ \|u\| \leq r.
 \end{eqnarray*}
 \label{} \ \ \   In the case of $\dim E^1 = \dim E^2 =\infty$, it is necessary to explore the Galerkin approximation method and some compactness argument to construct the critical points of the functional $\Phi$ in $E$. To this end, we need the following compactness condition which generalize the (PS) condition.\par\medskip
    Suppose that $E_{1}^{1}\subset E_{2}^{1}\subset \cdots \subset E^1$, $E_{1}^{2}\subset E_{2}^{2}\subset \cdots \subset E^2$ are
 two sequences of finite dimensional subspaces such that
 $$E^j = \overline{\bigcup\limits_{n\in {\mathbb{N}}} E_{n}^j},\ \ \ \ j=1,2.
 $$
  \label{} \ \ \ \ \  For two multi-index $\theta = (\theta^1 , \theta^2)$ and $\beta=(\beta^1, \beta^2)\in {\mathbb {N}}^2$, we denote by $\theta \leq \beta$ if $\theta^1 \leq \beta^1$ and $\theta^2 \leq \beta^2$. A sequence $(\theta_n)\subset {\mathbb {N}}^2$ is said to be admissible if for each $\theta \in {\mathbb {N}}^2$, there is an $m \in {\mathbb {N}}$ such that $\theta_n \geq \theta$ for all $n\geq m$. For $\theta=(\theta^1 , \theta^2)$, let $E_{\theta}= E_{\theta^1}^1 \oplus E_{\theta^2}^2$ and $\Phi_{\theta}=\Phi|_{E_{\theta}}$.\\[0.3em]
   \textit{{\bf Definition 2.2.}}
     The functional $\Phi \in C^1 (E, {\mathbb R})$ is said
 to satisfy the condition (PS)$^*$ if every sequence $(u_{\theta_n}^{\ })$ with $(\theta_n)\subset {\mathbb {N}}^2$ being admissible such that
  $$u_{\theta_n}^{\ } \in E_{\theta_n},\ \ \ \sup\limits_{n}\Phi(u_{\theta_n}^{\ })< \infty\ \ \ {\rm and}\ \ \ \Phi'_{\theta_n}(u_{\theta_n}^{\ })\rightarrow 0 \ \ \ {\rm as}\ \ n\rightarrow \infty
  $$
  contains a subsequence converging to a critical point of $\Phi$.

     We will use the following abstract proposition to solve the system (1.1)$_{a,b,c}$.\\[0.3em]
    \textit{{\bf Proposition A}} (\cite{LiWi, Zhang}). \emph{Suppose that $\Phi\in C^1 (E, {\mathbb {R}})$ and\\
     \indent $(A1)$ $\Phi$ satisfies {\rm(PS)}$^*$ condition{\rm ;}\\
     \indent $(A2)$ $\Phi$ has a local linking at $0;$\\
     \indent $(A3)$ $\Phi$ maps bounded sets into bounded sets$;$\\
     \indent $(A4)$ For every $m\in {\mathbb {N}}$, $\Phi(u)\rightarrow -\infty$ as $\|u\|\rightarrow \infty$, $u\in E_{m}^1 \oplus E^2$.\\
     Then $\Phi$ has a nontrivial critical point $u_0$ in $E$.
    }

    \begin{rmk}
     In {\rm\cite{DL, LiWi}}, it is also pointed out that the critical value corresponding to the critical point $u_0$ obtained in {\rm Proposition A} satisfying
     $$\Phi(u_0)\leq c,\ \ \ where\ c=\sup\limits_{u\in E^1_{m_1 +1} \oplus E^2}\Phi(u), \ and\ m_1\ is\ a\ positive\ integer.
     $$
    \end{rmk}

    In order to apply Proposition A to the functional defined by (2.2), we shall introduce the direct sum decomposition of the
 Banach space $E=H\times H$, where $H$ occurs in (2.1). Some notations are defined as follows: \par
   $\bullet$ $H_b^+$ is the subspace which is spanned by the functions $\sin(jx)\me^{\mi kt}$, where $j\in {\mathbb Z}_+$, $k \in {\mathbb Z}$ satisfying $j^2 -k^2 >-b$ and $j\neq |k|$;\par
   $\bullet$ $H_b^-$ is the subspace which is spanned by the functions $\sin(jx)\me^{\mi kt}$, where $j^2 -k^2 <-b$;\par
   $\bullet$ $H^0 \equiv \ker L$ is the subspace which is spanned by the functions $\sin(jx)\me^{\mi kt}$, for $j= |k|$;\par
   $\bullet$ $E_b^+ = H_b^+ \times H_b^+$, $E_b^- = H_b^- \times H_b^-$, and $E^0 = H^0 \times H^0$;\par
   $\bullet$ $E^1 = E_b^-$, $E^2 = E_b^+ \oplus E^0$, we see $\dim E^j = \infty$, for $j=1$, $2$;\par
   $\bullet$ $E^j_m = span\{e_1^j, \cdots, e_m^j\}$, where $(e_{n}^j)_{n=1}^{\infty}$ is a basis for $E^j$, $j=1$, $2$.\par
   We have $E=E^1 \oplus E^2$. Moreover, $E^1_m$, $E^2_m$ are finite dimensional spaces for every $m\in {\mathbb Z}$, and
 $E_{1}^{1}\subset E_{2}^{1}\subset \cdots \subset E^1$, $E_{1}^{2}\subset E_{2}^{2}\subset \cdots \subset E^2$.\par
  \subsection{Basic estimates}
   \label{} \ \ \ \  At the end of this section,  we list some basic formulas and properties of the Banach spaces on which we
 will work.\par
     For $r\geq 1$, we denote by $L^r (\Omega)$ the space of functions $u(t,x)$ with the norm
     $$\|u\|_{L^r} = \big(\iint_{\Omega}|u(t,x)|^r \dif t
 \dif x \big)^{1/r}.
 $$

    $\bullet$ {\bf Formulas of the norms $\|\cdot\|_{L^2}$, $\|\cdot\|_{H}$ and functional $\Phi$}\par\medskip
    For $u,\ v\in H$, and
    $$u = \sum\limits_{j\in \mathbb{Z_+},~k\in \mathbb{Z}} u_{jk}\sin (jx) \me^{\mi kt},\ \ v = \sum\limits_{j\in \mathbb{Z_+},~k\in \mathbb{Z}} v_{jk}\sin (jx) \me^{\mi kt},
    $$
     let $u = u^+ + u^- + y$, $v = v^+ + v^- + z$, where $u^+,\ v^+ \in H_b^+$, $u^-,\ v^- \in H_b^-$, and
 $y,\ z \in H^0$. By the orthogonality of the subspaces $H_b^+$, $H_b^-$ and $H^0$, we can write the inner product in $L^2(\Omega)$ as $\lag u, v\rag = \lag u^+, v^+\rag + \lag u^-, v^-\rag + \lag y, z\rag$, and $\|u\|_{L^2}^2 = \|u^+\|_{L^2}^2 + \|u^-\|_{L^2}^2 +\|y\|_{L^2}^2 = \pi^2 \sum\limits_{j\in \mathbb{Z_+},~k\in \mathbb{Z}} |u_{jk}|^2$.\par
    Noting that $-b \not\in \sigma(L)$ and $Ly=0$ for $y\in H^0$, we have $\lag Lu, v\rag = \lag Lu^+, v^+\rag + \lag Lu^-, v^-\rag$, and
    $$\lag (L+b)u, u\rag = \pi^2 \sum\limits_{j\in \mathbb{Z_+},~k\in \mathbb{Z}} (j^2 - k^2 + b) |u_{jk}|^2 = \|u^+\|_{H}^2 - \|u^-\|_{H}^2 + b\|y\|_{L^2}^2. \eqno(2.4)
    $$
  \label{}\ \ \ \ \  With the aid of (2.4), we can formulate the energy functional (2.2) as
  \begin{eqnarray*}
   \qquad\qquad \Phi(u,v) &=& -\frac{1}{2}\|u^+\|_{H}^2 + \frac{1}{2} \|u^-\|_{H}^2 - \frac{b}{2} \|y\|_{L^2}^2
      -\frac{1}{2}\|v^+\|_{H}^2 + \frac{1}{2} \|v^-\|_{H}^2 - \frac{b}{2} \|z\|_{L^2}^2\\
      &&-\ \varepsilon \int uv - \int F(t,x,u) - \int G(t,x,v). \qquad\qquad\qquad\qquad\quad\ \ (2.5)
  \end{eqnarray*}
 \label{}\ \ \ \ \   $\bullet$ {\bf Some embedding estimates}\par\medskip
   In view of (1.2), there is $\eta >0$, such that
   $$\|u^+\|_H ^2 = \pi^2 \sum\limits_{\substack{j^2 - k^2 > -b\\j\neq |k|}}
(j^2 -k^2 + b)|u_{jk}|^{2} \geq \eta \|u^+\|_{L^2} ^2,\ {\rm and}\ \|u^-\|_H ^2 \geq \eta \|u^-\|_{L^2} ^2. \eqno (2.6)
$$
 Thus, by (2.1) and (2.6) we have
$$\|u\|_{L^2} ^2 \leq  \kappa\|u\|_{H} ^2,\ {\rm where}\ \kappa= \max\{ 1/\eta,\ 1\}.  \eqno (2.7)
$$
 \label{}\ \ \ \ \   The following properties are well known (see \cite{DL} for instance):
   $$\|w\|_{L^r} \leq C \|w\|_{H},\ \ \ {\rm for}\ w\in H_b^+ \oplus H_b^- \ {\rm and}\ r\geq 1, \eqno (2.8)
   $$
    where $C>0$ is a constant which only depends on $r$.
Furthermore, the embedding
$$ H_b^+ \oplus H_b^- \hookrightarrow L^r (\Omega)\ {\rm is\ compact,\ \ for}\ r\geq 1.  \eqno (2.9)
$$
\begin{rmk}
 Let us point out that we lose the compact embedding from $H^0$ to $L^r (\Omega)$ for $r>2$, because of $0\in \sigma(L)$ and $\dim \ker L =\infty$. Moreover, according to the definition of $H$ (see {\rm(2.1)}), we do not obtain that $\|w\|_{L^r} \leq C \|w\|_H$, for any $w\in H$, and $r>2$. Thus, we need careful computations to study the behavior of the null components and nonlinear terms appearing in the functional $\Phi$.
\end{rmk}

\section{Existence of weak solutions}
\setcounter{equation}{0}
\label{}\ \ \ \
  In this section, we prove Theorem 1.1. To achieve this goal, we will check that the conditions of Proposition A hold for the functional $\Phi$ defined by (2.2). We use $C$ and $c_{*}$, $d_*$ with quantity subscripts to stand for different constants in the rest of this article.\par
   {\bf {Verification of \ {\rm (A1)}}:}\\
\label{}\ \ \ \ Let $(u_{\theta_n}^{\ }, v_{\theta_n}^{\ }) \in E_{\theta_n}:= E^1_{\theta^1 _n} \oplus E^2_{\theta^2 _n}$,
 where $\theta_n = (\theta_n^1 , \theta_n^2) \in {\mathbb N}^2$, the sequence $\{\theta_n\}_{n=1}^\infty$ is admissible, and $E^1_{\theta^1 _n}$, $E^2_{\theta^2 _n}$ are
 finite dimensional spaces defined in subsection 2.2.

   We suppose $\{(u^{\ }_{\theta_n}, v^{\ }_{\theta_n})\}_{n=1}^\infty$ is a (PS)$^*$ sequence of $\Phi$, that is $d:= \sup\limits_{n}\Phi(u^{\ }_{\theta_n}, v^{\ }_{\theta_n})< \infty$, and $\Phi'_{\theta_n}(u^{\ }_{\theta_n}, v^{\ }_{\theta_n})\rightarrow 0$ as $n \rightarrow \infty$, where $\Phi'_{\theta_n}= \big(\Phi|_{E_{\theta_n}}\big)'$. First, under the assumptions of Theorem 1.1, we show: \\[0.3em]
 \textit{{\bf Lemma 3.1.}} \emph{Any} {\rm (PS)}$^{*}$ \emph{sequence is bounded.}
\begin{proof}
  For simplicity, we denote by $(u, v) = (u_{\theta_n}^{\ }, v_{\theta_n}^{\ })$.\par
\emph{Step 1. Estimates of $\|u\|_{L^{p+1}}$ and $\|v\|_{L^{q+1}}$}\par
   By assumption (h3) and (1.3), a direct calculation shows
   \begin{eqnarray*}
     &&\Phi(u,v)-\frac{1}{2}\lag \Phi'_{\theta_n}(u,v),\ (u,v)\rag\\
      &=& \frac{1}{2}\int f(t,x,u)u - \int F(t,x,u) + \frac{1}{2}\int g(t,x,v)v - \int G(t,x,v)\\
      &\geq& \frac{p-1}{2} \int F(t,x,u) + \frac{q-1}{2} \int G(t,x,v) \ \geq \ c_1 \|u\|_{L^{p+1}}^{p+1} + c_1 \|v\|_{L^{q+1}}^{q+1} -c_2.
   \end{eqnarray*}
   Since $\{(u,v)\} = \{(u^{\ }_{\theta_n}, v^{\ }_{\theta_n})\}$ is a (PS)$^*$ sequence of $\Phi$, for $n$ is large enough, we have
  $$c_1 \|u\|_{L^{p+1}}^{p+1} + c_1 \|v\|_{L^{q+1}}^{q+1} -c_2 \leq d + \|(u,v)\|_E.
  $$
  Hence, there is $c_3 >0$, such that
  $$\|u\|_{L^{p+1}} \leq c_3 + c_3 \|(u,v)\|_{E}^{\frac{1}{p+1}},\ \ {\rm and}\ \|v\|_{L^{q+1}} \leq c_3 + c_3 \|(u,v)\|_{E}^{\frac{1}{q+1}}. \eqno (3.1)
  $$
\label{}\ \ \ \ \emph{Step 2. Estimates of $\|u^{\pm}\|_{H}$, $\|v^{\pm}\|_{H}$, $\|y\|_{L^2}$ and $\|z\|_{L^2}$}\par
   We decompose $u= u^+ +u^- +y$, $v=v^+ +v^- +z$, where $u^+$, $v^+ \in H_{b}^+$, $u^-$, $v^- \in H_{b}^-$, and $y$, $z\in H^0$.
Noting that $\|u\|_{L^2}^2 = \|u^+\|_{L^2}^2 + \|u^-\|_{L^2}^2 + \|y\|_{L^2}^2$, then by virtue of $p$, $q>1$ and (3.1), we estimate
 $$\|y\|_{L^2}^2 \leq \|u\|_{L^2}^2 \leq c\|u\|_{L^{p+1}}^2 \leq c_4 + c_4 \|(u,v)\|_{E}^{\frac{2}{p+1}}, \eqno (3.2)
 $$
 and
 $$\|z\|_{L^2}^2 \leq \|v\|_{L^2}^2 \leq c\|v\|_{L^{q+1}}^2 \leq c_4 + c_4 \|(u,v)\|_{E}^{\frac{2}{q+1}}. \eqno (3.3)
 $$
 \label{}\ \ \ \ Taking $(\varphi, \psi) = (u^+, v^+)$ in (2.3), from the orthogonality of the subspaces $H_b^+$, $H_b^-$ and $H^0$, we get
 \begin{eqnarray*}
 \lag \Phi_{\theta_n}'(u,v),~(u^+, v^+)\rag &=& - \lag (L+b)u^+, u^+\rag - \lag (L+b)v^+, v^+\rag - 2\varepsilon \int u^+ v^+ \\
 &&- \int f(t,x,u)u^+ - \int g(t,x,v)v^+.
 \end{eqnarray*}
 In fact of $\|u^+\|_H^2 = \lag (L+b)u^+, u^+\rag$, and $\|v^+\|_H^2 = \lag (L+b)v^+, v^+\rag$ for $u^+$, $v^+ \in H_b ^+$, when $n$ is large enough we obtain
 $$\|u^+\|_H ^2 + \|v^+\|_H ^2 \leq o(1) - 2\varepsilon \int u^+ v^+ - \int f(t,x,u)u^+ - \int g(t,x,v)v^+.  \eqno (3.4)
 $$
 \label{}\ \ \ \ A similar argument as in (3.2) provides that
 \begin{eqnarray*}
\ \ \ \ \ \qquad - 2\varepsilon \int u^+ v^+ &\leq& 2\;|\varepsilon|\;\|u^+\|_{L^2}\|v^+\|_{L^2} \leq c \|u\|_{L^{p+1}}\|v\|_{L^{q+1}}\\
 &\leq& c_5 + c_5 \|(u,v)\|_E ^{\frac{1}{p+1}} + c_5 \|(u,v)\|_E ^{\frac{1}{q+1}} + c_5 \|(u,v)\|_E ^{\frac{1}{p+1}+\frac{1}{q+1}}.\ \ \ \ (3.5)
 \end{eqnarray*}
\label{}\ \ \ \ By assumption (h1), (3.1) and the H\"{o}lder inequality, we get
  \begin{eqnarray*}
  \qquad\qquad - \int f(t,x,u)u^+ &\leq& c_0 \int \big(|u^+| + |u|^p |u^+|\big)\ \leq \ c \|u^+\|_{L^2} + c_0 \|u\|_{L^{p+1}}^{p} \|u^+\|_{L^{p+1}}\\
    &\leq& c_6 \big(1+ \|(u,v)\|_{E}^{\frac{p}{p+1}}\big)\|u^+\|_H, \quad\qquad\qquad\qquad\qquad\qquad (3.6)
  \end{eqnarray*}
  where the last inequality is deduced by (2.7), (2.8) and (3.1). Similarly, we have
    $$- \int g(t,x,v)v^+ \leq c \|v^+\|_{L^2} + c_0 \|v\|_{L^{q+1}}^{q} \|v^+\|_{L^{q+1}} \leq c_6 \big(1+ \|(u,v)\|_{E}^{\frac{q}{q+1}}\big)\|v^+\|_H.    \eqno (3.7)
    $$
  \label{} \ \ \ \ Inserting (3.5)-(3.7) into the right hand side of (3.4), and by the inequalities that $\|u^+\|_H \leq \|(u,v)\|_E$ and $\|v^+\|_H \leq \|(u,v)\|_E$, we know
  \begin{eqnarray*}
  \qquad \|u^+\|_H ^2 + \|v^+\|_H ^2 &\leq& o(1)+ c_5 + c_5 \|(u,v)\|_E ^{\frac{1}{p+1}} + c_5 \|(u,v)\|_E ^{\frac{1}{q+1}} + c_5 \|(u,v)\|_E ^{\frac{1}{p+1}+\frac{1}{q+1}}\\
  && +\ c_6 \big(1+ \|(u,v)\|_{E}^{\frac{p}{p+1}}+\|(u,v)\|_{E}^{\frac{q}{q+1}}\big)\|(u,v)\|_E.\qquad \qquad\qquad (3.8)
 \end{eqnarray*}
 \label{}\ \ \ \ For $u^-$, $v^- \in H_b ^-$, analogue to (3.8) we also derive
 \begin{eqnarray*}
  \quad \|u^-\|_H ^2 + \|v^-\|_H ^2 &=& \lag \Phi_{\theta_n}'(u,v),~(u^-, v^-)\rag +2\varepsilon \int u^- v^- + \int f(t,x,u)u^- + \int g(t,x,v)v^- \\
  &\leq& c'_5 + c'_5 \|(u,v)\|_E ^{\frac{1}{p+1}} + c'_5 \|(u,v)\|_E ^{\frac{1}{q+1}} + c'_5 \|(u,v)\|_E ^{\frac{1}{p+1}+\frac{1}{q+1}}\\[0.4em]
  && +\ c'_6 \big(1+ \|(u,v)\|_{E}^{\frac{p}{p+1}}+\|(u,v)\|_{E}^{\frac{q}{q+1}}\big)\|(u,v)\|_E. \quad\qquad\qquad\qquad (3.9)
 \end{eqnarray*}
\label{}\ \ \ \ \emph{Step 3. Bound of} $\|(u,v)\|_{E}$\par
  Observing that
  $$\|(u,v)\|_{E}^2 = \|u^+\|_H ^2 + \|v^+\|_H ^2 + \|u^-\|_H ^2 + \|v^-\|_H ^2 + \|y\|_{L^2} ^2 + \|z\|_{L^2} ^2,
  $$
and using the estimates of (3.2), (3.3), (3.8), (3.9), we arrive at
  \begin{eqnarray*}
    \|(u,v)\|_{E}^2 &\leq& c_7 + c_7 \|(u,v)\|_E ^{\frac{1}{p+1}}+ c_7 \|(u,v)\|_E ^{\frac{1}{q+1}} + c_7 \|(u,v)\|_E ^{\frac{1}{p+1}+\frac{1}{q+1}} + c_7 \|(u,v)\|_E ^{\frac{2}{p+1}}\\
    && +\ c_7 \|(u,v)\|_E ^{\frac{2}{q+1}} + c_7 \|(u,v)\|_E
   + c_7 \|(u,v)\|_E ^{\frac{p}{p+1}+1} + c_7 \|(u,v)\|_E ^{\frac{q}{q+1}+1}.
  \end{eqnarray*}
 \label{}\ \ \ \ In view of $p$, $q>1$, we find all the powers of $\|(u,v)\|_E$ in the right hand side of the preceding inequality are less than 2. Hence, there exists $M>0$ which is independent of $n$, such that $\|(u,v)\|_E \leq M$, and we conclude that any (PS)$^*$ sequence $\{(u^{\ }_{\theta_n}, v^{\ }_{\theta_n})\}$ of $\Phi$ is bounded.
\end{proof}
  In what follows, under the assumptions of Theorem 1.1, we assert that the functional $\Phi$ satisfies the (PS)$^*$ condition. \\[0.3em]
 \textit{{\bf Lemma 3.2.}} \emph{Let $\{(u_{\theta_n}^{\ }, v_{\theta_n}^{\ })\}$ be a {\rm (PS)}$^{*}$ sequence of $\Phi$, then $\{(u_{\theta_n}^{\ }, v_{\theta_n}^{\ })\}$ contains a subsequence which converges to a critical point of $\Phi$.}
\begin{proof}
Since $E$ is a Hilbert space, then Lemma 3.1 guarantees that $\{(u_{\theta_n}^{\ }, v_{\theta_n}^{\ })\}$ converge weakly to some $(u,v)\in E$ along with a subsequence. Decomposing $u_{\theta_n}^{\ } = u^+_{\theta_n} + u^-_{\theta_n} + y^{\ }_{\theta_n}$, $v_{\theta_n}^{\ } = v^+_{\theta_n} + v^-_{\theta_n} + z^{\ }_{\theta_n}$, where $(u^+_{\theta_n}, v^+_{\theta_n}) \in E^+_b$, $(u^-_{\theta_n}, v^-_{\theta_n}) \in E^-_b$, and $(y^{\ }_{\theta_n}, z^{\ }_{\theta_n}) \in E^0$. Let $(u^+, v^+) \in E^+_b$, $(u^-, v^-) \in E^-_b$ and $(y, z) \in E^0$ are the weak limits of $\{(u^+_{\theta_n}, v^+_{\theta_n})\}$, $\{(u^-_{\theta_n}, v^-_{\theta_n})\}$ and $\{(y_{\theta_n}^{\ }, z_{\theta_n}^{\ })\}$ respectively.\par
  We will conclude that $\{(u_{\theta_n}^{\ }, v_{\theta_n}^{\ })\}$ converge {\emph{strongly}} to $(u, v) = (u^+ + u^- +y,\ v^+ + v^- +z)$, by extracting a subsequence if necessary.\par\medskip
  \emph{Strong convergence of $\{(u^+_{\theta_n}, v^+_{\theta_n})\}$ and $\{(u^-_{\theta_n}, v^-_{\theta_n})\}$ in $E$}\par\medskip
  For $(u^+_{\theta_n}, v^+_{\theta_n})\in E_b^+$ and $(u^+, v^+)\in E_b^+$, by virtue of the weak convergence of $u^+_{\theta_n} \rightharpoonup u^+$ and $v^+_{\theta_n} \rightharpoonup v^+$ in $H$, we have $\lag (L+b)u^+, u^+_{\theta_n} - u^+ \rag \rightarrow 0$ and $\lag (L+b)v^+, v^+_{\theta_n} - v^+ \rag  \rightarrow 0$ as $n\rightarrow \infty$.
  Hence it follows that, for large $n$,
  \begin{eqnarray*}
    \qquad\quad\qquad &&\|u^+_{\theta_n} - u^+\|_H^2 + \|v^+_{\theta_n} - v^+\|_H^2\\[0.3em]
     &=& \lag (L+b)(u^+_{\theta_n} - u^+), u^+_{\theta_n} - u^+ \rag +
     \lag (L+b)(v^+_{\theta_n} - v^+), v^+_{\theta_n} - v^+ \rag \\[0.3em]
     &=& \lag (L+b)u^+_{\theta_n}, u^+_{\theta_n} - u^+ \rag +\lag (L+b)v^+_{\theta_n}, v^+_{\theta_n} - v^+\rag +o(1).   \quad\qquad\qquad   (3.10)
  \end{eqnarray*}
  From (2.3), the first two terms in the right hand side of (3.10) can be expressed by
  \begin{eqnarray*}
    \qquad &&\lag (L+b)u^+_{\theta_n}, u^+_{\theta_n} - u^+ \rag +\lag (L+b)v^+_{\theta_n}, v^+_{\theta_n} - v^+\rag\\[0.4em]
   &=& -\ \lag \Phi'(u_{\theta_n}^{\ }, v_{\theta_n}^{\ }), (u^+_{\theta_n} - u^+, v^+_{\theta_n} - v^+)\rag   - \int f(t,x,u_{\theta_n}^{\ })(u^+_{\theta_n} - u^+) \\
   && -\ \varepsilon \int v_{\theta_n}^{\ }(u^+_{\theta_n} - u^+) - \varepsilon \int u_{\theta_n}^{\ }(v^+_{\theta_n} - v^+) - \int g(t,x,v_{\theta_n}^{\ })(v^+_{\theta_n} - v^+).\qquad (3.11)
  \end{eqnarray*}
\label{}\ \ \ \    To control (3.11), we denote by $P_{\theta_n}$ the projection operator from $E$ to its subspace $E_{\theta_n}$, and  we represent the first term in the right hand side of (3.11) as
\begin{eqnarray*}
  \lag \Phi'(u_{\theta_n}^{\ }, v_{\theta_n}^{\ }), (u^+_{\theta_n} - u^+, v^+_{\theta_n} - v^+)\rag
  &=& \lag \Phi_{\theta_n}'(u_{\theta_n}^{\ }, v_{\theta_n}^{\ }), (u^+_{\theta_n} - P_{\theta_n}u^+, v^+_{\theta_n} - P_{\theta_n}v^+)\rag\\[0.4em]
  && \ -\ \lag \Phi'(u_{\theta_n}^{\ }, v_{\theta_n}^{\ }), (I-P_{\theta_n}) (u^+, v^+)\rag.
 \end{eqnarray*}
 Noting that $(u^+_{\theta_n} - P_{\theta_n}u^+,\ v^+_{\theta_n} - P_{\theta_n}v^+) \in E_{\theta_n}$, then the facts of $\Phi_{\theta_n}'(u_{\theta_n}^{\ }, v_{\theta_n}^{\ }) \rightarrow 0$ and $(I-P_{\theta_n}) (u^+, v^+) \rightarrow 0$ assure that
 $$\lag \Phi'(u_{\theta_n}^{\ }, v_{\theta_n}^{\ }), (u^+_{\theta_n} - u^+, v^+_{\theta_n} - v^+)\rag \rightarrow 0,\ \ {\rm as}\ n\rightarrow \infty.     \eqno (3.12)
 $$
\label{}\ \ \ \ Observe that $H_b^+ \oplus H_b^-$ embeds compactly into $L^r (\Omega)$ for $r\geq 1$, then $u^+_{\theta_n} \rightharpoonup u^+$, $v^+_{\theta_n} \rightharpoonup v^+$ in $H$ imply that $u^+_{\theta_n} \rightarrow u^+$, $v^+_{\theta_n} \rightarrow v^+$ strongly in $L^r (\Omega)$ for every $r\geq 1$. Moreover, since $\{u^{\ }_{\theta_n}\}$, $\{v^{\ }_{\theta_n}\}$ are bounded in $H$, and from (2.7) we obtain
\begin{eqnarray*}
  \qquad\qquad\left |\varepsilon \int v^{\ }_{\theta_n} (u^+_{\theta_n} - u^+)  \right |
  &\leq& |\varepsilon|\; \|v^{\ }_{\theta_n}\|_{L^2} \|u^+_{\theta_n} - u^+\|_{L^2} \rightarrow 0, \qquad\qquad\qquad\qquad\  (3.13)\\
  \left |\varepsilon \int u^{\ }_{\theta_n} (v^+_{\theta_n} - v^+)  \right |
  &\leq& |\varepsilon|\; \|u^{\ }_{\theta_n}\|_{L^2} \|v^+_{\theta_n} - v^+\|_{L^2} \rightarrow 0,\ \ {\rm as}\ n\to \infty.\quad\qquad (3.14)
\end{eqnarray*}
By the assumption (h1) together with H\"{o}lder inequality, it follows from (3.1) and the boundedness of $\|(u^{\ }_{\theta_n}, v^{\ }_{\theta_n})\|_E$ that
\begin{eqnarray*}
  \left |\int f(t,x,u^{\ }_{\theta_n}) (u^+_{\theta_n} - u^+)  \right |
  &\leq& c_8\|u^+_{\theta_n} - u^+\|_{L^2} + c_8 \|u^{\ }_{\theta_n}\|^{p}_{L^{p+1}} \|u^+_{\theta_n} - u^+\|_{L^{p+1}}\rightarrow 0,\ \ (3.15)\\
  \left |\int g(t,x,v^{\ }_{\theta_n}) (v^+_{\theta_n} - v^+)  \right |
  &\leq& c_8\|v^+_{\theta_n} - v^+\|_{L^2} + c_8 \|v^{\ }_{\theta_n}\|^{q}_{L^{q+1}} \|v^+_{\theta_n} - v^+\|_{L^{q+1}}\rightarrow 0,\ \ \ (3.16)
\end{eqnarray*}
as $n\to \infty$.\par
   Collecting (3.10)-(3.16), we infer
$$\|u^+_{\theta_n} - u^+\|_H^2 + \|v^+_{\theta_n} - v^+\|_H^2 \rightarrow 0, \ \ {\rm as}\ n\rightarrow \infty,
$$
 which means $\{(u^+_{\theta_n}, v^+_{\theta_n})\}$ converge to $(u^+, v^+)$ strongly in $E$. Furthermore, for $(u^-_{\theta_n}, v^-_{\theta_n})\in E_b^-$ and $(u^-, v^-)\in E_b^-$, a similar argument allows us to obtain $(u^-_{\theta_n}, v^-_{\theta_n}) \rightarrow (u^-, v^-)$ strongly in $E$.\par
   To finish the proof of this lemma, it remains to derive the strong convergence of $(y^{\ }_{\theta_n}, z^{\ }_{\theta_n}) \rightarrow (y,
 z)$ in $E$, where $(y^{\ }_{\theta_n}, z^{\ }_{\theta_n})$, $(y, z) \in E^0$. Keeping in mind that the embedding from $E^0$ to $L^r (\Omega)$ is not compact, thus the above procedure is invalid to control the components $(y,z) \in E^0$. In order to deal with the difficulties stem from the lack of compactness, we shall employ the monotonicity method to study the asymptotic behavior of $\{(y^{\ }_{\theta_n}, z^{\ }_{\theta_n})\}$.\par\medskip
  \emph{Strong convergence of $\{(y^{\ }_{\theta_n}, z^{\ }_{\theta_n})\}$ in $E$ for small $\varepsilon$}\par\medskip
  Recall (2.1), it suffices to show that $\|y^{\ }_{\theta_n} - y\|_{L^2} \rightarrow 0$,  $\|z^{\ }_{\theta_n} - z\|_{L^2} \rightarrow 0$ as
$n\rightarrow \infty$, where $(y, z)$ is the weak limit of $\{(y^{\ }_{\theta_n}, z^{\ }_{\theta_n})\}$ in $E^0$. \par
  From (2.3) and set $(\varphi, \psi)= (y^{\ }_{\theta_n}-y, z^{\ }_{\theta_n}-z)$, then
  $$\lag Lu^{\ }_{\theta_n},  y^{\ }_{\theta_n}-y\rag = \lag Ly^{\ }_{\theta_n},  y^{\ }_{\theta_n}-y\rag =0,\ \ \lag Lv^{\ }_{\theta_n},  z^{\ }_{\theta_n}-z\rag = \lag Lz^{\ }_{\theta_n},  z^{\ }_{\theta_n}-z\rag =0
  $$
  imply that
  \begin{eqnarray*}
   &&-\lag \Phi'(u^{\ }_{\theta_n}, v^{\ }_{\theta_n}), (y^{\ }_{\theta_n} - y, z^{\ }_{\theta_n} - z)\rag = \int f(t,x,u^{\ }_{\theta_n})(y^{\ }_{\theta_n} - y) + \int g(t,x,v^{\ }_{\theta_n})(z^{\ }_{\theta_n} - z)  \\
    &+& b\int u^{\ }_{\theta_n} (y^{\ }_{\theta_n}-y)
    + \varepsilon \int v^{\ }_{\theta_n} (y^{\ }_{\theta_n}-y) + b\int v^{\ }_{\theta_n} (z^{\ }_{\theta_n}-z) + \varepsilon \int u^{\ }_{\theta_n} (z^{\ }_{\theta_n}-z). \qquad (3.17)
  \end{eqnarray*}
\label{}\ \ \ \ Since $y^{\ }_{\theta_n} \rightharpoonup y$, $z^{\ }_{\theta_n}\rightharpoonup z$ in $L^2(\Omega)$, and by the orthogonality of the subspaces $H_b^+$, $H_b^-$ and $H^0$, we conclude that
 \begin{eqnarray*}
   b\int u^{\ }_{\theta_n} (y^{\ }_{\theta_n}-y) + b\int v^{\ }_{\theta_n} (z^{\ }_{\theta_n}-z) &=& b\int y^{\ }_{\theta_n} (y^{\ }_{\theta_n}-y) + b\int z^{\ }_{\theta_n} (z^{\ }_{\theta_n}-z)\\
   &=& b\int (y^{\ }_{\theta_n}-y)^2 + b\int (z^{\ }_{\theta_n}-z)^2 + o(1),
 \end{eqnarray*}
 and
  \begin{eqnarray*}
    \varepsilon \int v^{\ }_{\theta_n} (y^{\ }_{\theta_n}-y) + \varepsilon \int u^{\ }_{\theta_n} (z^{\ }_{\theta_n}-z) &=& \varepsilon \int z^{\ }_{\theta_n} (y^{\ }_{\theta_n}-y) + \varepsilon \int y^{\ }_{\theta_n} (z^{\ }_{\theta_n}-z)\\
    &=& 2 \varepsilon \int (y^{\ }_{\theta_n}-y) (z^{\ }_{\theta_n}-z) +o(1),\ \ \ {\rm as}\ n\rightarrow \infty.
  \end{eqnarray*}
 Moreover, an analogue of argument in (3.12) gives $\lag \Phi'(u^{\ }_{\theta_n}, v^{\ }_{\theta_n}), (y^{\ }_{\theta_n} - y, z^{\ }_{\theta_n} - z)\rag \rightarrow 0$, as $n\to \infty$.\par
   Therefore, the previous three estimates and (3.17) allow us to deduce that
   \begin{eqnarray*}
  \qquad  && (b-|\varepsilon|)\|y^{\ }_{\theta_n}-y\|^2_{L^2} + (b-|\varepsilon|)\|z^{\ }_{\theta_n}-z\|^2_{L^2} \\[0.4em]
   &\leq& b\|y^{\ }_{\theta_n}-y\|^2_{L^2} + b\|z^{\ }_{\theta_n}-z\|^2_{L^2} + 2 \varepsilon \int (y^{\ }_{\theta_n}-y) (z^{\ }_{\theta_n}-z) \\
   &=& -\int f(t,x,u^{\ }_{\theta_n})(y^{\ }_{\theta_n} - y) - \int g(t,x,v^{\ }_{\theta_n})(z^{\ }_{\theta_n} - z) +o(1),\ {\rm as}\ n\to \infty.\qquad (3.18)
  \end{eqnarray*}
 \label{}\ \ \ \ To control the right hand side of (3.18), we rewrite
  \begin{eqnarray*}
   \quad \int f(t,x,u^{\ }_{\theta_n})(y^{\ }_{\theta_n} - y) &=& \int [f(t,x,u^+_{\theta_n}+u^-_{\theta_n}+y^{\ }_{\theta_n})
   - f(t,x,u^+_{\theta_n}+u^-_{\theta_n}+y)](y^{\ }_{\theta_n} - y)\\
   &+& \int [f(t,x,u^+_{\theta_n}+u^-_{\theta_n}+y)
   - f(t,x,u^+ + u^- +y)](y^{\ }_{\theta_n} - y)\\
    &+& \int f(t,x,u^+ + u^- +y)(y^{\ }_{\theta_n} - y) := I_1 + I_2 + I_3.  \qquad\ \  (3.19)
  \end{eqnarray*}
  \label{}\ \ \ \ We estimate $I_1$, $I_2$ and $I_3$ in the sequel.\par
   Noting that $f(t,x,\xi)$ is nondecreasing in $\xi$ by the condition (h4) of Theorem 1.1, we have $I_1 \geq 0$ immediately.\par
  To estimate $I_2$ and $I_3$, firstly we check $y^{\ }_{\theta_n} \rightharpoonup y$ weakly in $L^{p+1}(\Omega)$. For $u^{\ }_{\theta_n} = u^+_{\theta_n} + u^-_{\theta_n} + y^{\ }_{\theta_n}$, in view of (3.1) and the embedding $H^{\pm}_b \hookrightarrow L^{p+1}(\Omega)$, we get
   \begin{eqnarray*}
    \qquad \|y^{\ }_{\theta_n}\|_{L^{p+1}} &\leq& \|u^{\ }_{\theta_n}\|_{L^{p+1}} + \|u^+_{\theta_n}\|_{L^{p+1}} + \|u^-_{\theta_n}\|_{L^{p+1}}
     \leq \|u^{\ }_{\theta_n}\|_{L^{p+1}} + c\|u^+_{\theta_n}\|_H + c\|u^-_{\theta_n}\|_H\\
      &\leq& c_3 + c_3 \|(u^{\ }_{\theta_n}, v^{\ }_{\theta_n})\|_{E}^{\frac{1}{p+1}} + 2c \|(u^{\ }_{\theta_n}, v^{\ }_{\theta_n})\|_{E}.  \qquad\qquad\qquad\qquad\qquad\  (3.20)
   \end{eqnarray*}
 Then the boundedness of the (PS)$^*$ sequence $\{(u^{\ }_{\theta_n}, v^{\ }_{\theta_n})\}$ ensures that $\{y^{\ }_{\theta_n}\}$ is bounded in $L^{p+1}(\Omega)$, and $\{y^{\ }_{\theta_n}\}$ possesses a subsequence which converge weakly in $L^{p+1}(\Omega)$. Recording that $L^{p+1}(\Omega) \hookrightarrow L^2(\Omega)$ for $p>1$ and $y$ is the weak limit of $\{y^{\ }_{\theta_n}\}$ in $L^2(\Omega)$, hence by the uniqueness of weak limit, we have $y^{\ }_{\theta_n} \rightharpoonup y$ weakly in $L^{p+1}(\Omega)$, with a subsequence still renamed by $\{y^{\ }_{\theta_n}\}$.\par
    By the condition (h1) of Theorem 1.1, we know the operator $f: \xi \mapsto f(t,x,\xi)$ is continuous from $L^{p+1}(\Omega)$ to
 $L^{\frac{p+1}{p}}(\Omega)$. Since $u^+_{\theta_n}\rightarrow u^+$ and $u^-_{\theta_n}\rightarrow u^-$ in $H$, we have  $u^+_{\theta_n} + u^-_{\theta_n} \rightarrow u^+ + u^-$ strongly in $L^{p+1}(\Omega)$ via the embedding $H^+_b\oplus H^-_b \hookrightarrow L^{p+1}(\Omega)$. Therefore,
  $$f(t,x,u^+_{\theta_n}+u^-_{\theta_n}+y)
   - f(t,x,u^+ + u^- +y) \rightarrow 0\ {\rm in}\ L^{\frac{p+1}{p}}_{\ },\ \ {\rm as}\ n\rightarrow \infty,
  $$
 and $f(t,x,u^+ + u^- +y)$ is in $L^{\frac{p+1}{p}}(\Omega)$. By virtue of $y^{\ }_{\theta_n}-y \in L^{p+1}$ and $L^{\frac{p+1}{p}}(\Omega)=(L^{p+1}(\Omega))^*$, we obtain that $I_2 \rightarrow 0$ and $I_3 \rightarrow 0$, as $n\rightarrow \infty$.\par
   With the estimates of $I_1$, $I_2$, $I_3$ in hand, then passing the limit in (3.19) yields that
   $$\int f(t,x,u^{\ }_{\theta_n})(y^{\ }_{\theta_n} - y) \geq 0,\ \ {\rm as}\ n\rightarrow \infty. \eqno(3.21)
   $$
    Moreover, with a similar computation, we find
 $$\int g(t,x,v^{\ }_{\theta_n})(z^{\ }_{\theta_n} - z) \geq 0,\ \  {\rm as}\ n\rightarrow \infty.
  $$
  Hence, turning back to (3.18) and choosing $|\varepsilon| < b$, we have $\|y^{\ }_{\theta_n}-y\|^2_{L^2} + \|z^{\ }_{\theta_n}-z\|^2_{L^2} \leq o(1)$ as $n\rightarrow \infty$. Thereby the proof of Lemma 3.2 is completed.
\end{proof}
   {\bf {Proof of \ {\rm (A2)}}:}\\
\label{}\ \ \ \ The following lemma shows the functional $\Phi$ satisfying the local linking structure for small $\varepsilon$.\\[0.3em]
 \textit{{\bf Lemma 3.3.}} \emph{Assume that} (h1)$-$(h4) \emph{of Theorem} 1.1 \emph{are satisfied, then for $\varepsilon$ is sufficiently small, there exists $\rho >0$, such that}\par
  (i) $\Phi(u,v)\geq 0, \ for\ (u,v)\in E^1\cap B_{\rho}$, \par
   (ii) $\Phi(u,v)\leq 0, \ for\ (u,v)\in E^2\cap B_{\rho}$,\\
\emph{where} $E^1 = E^-_{b}$, $E^2 = E^+_{b}\oplus E^0$, and $B_{\rho} = \left\{(u,v)\big| \|(u,v)\|_{E} \leq \rho\right\}$.
\begin{proof}
  (i) For $(u,v)\in E^-_b = H^-_b \times H^-_b$, by (2.4) we have
  $$\|(u,v)\|_E^2 = \|u\|_H^2 + \|v\|_H^2 = - \lag(L+b)u, u\rag - \lag(L+b)v, v\rag.
   $$
   Thus the energy functional (2.5) can be represented by
   $$ \Phi(u,v)= \frac{1}{2}\|(u,v)\|_{E}^2 -\varepsilon \int uv  -\int F(t,x,u) -\int G(t,x,v).
   $$
   \label{}\ \ \ \ \ By (2.6), we get
   $$\varepsilon \int uv  \leq \frac{|\varepsilon|}{2}\|u\|^2_{L^2} + \frac{|\varepsilon|}{2}\|v\|^2_{L^2}
    \leq \frac{|\varepsilon|}{2\eta}\|u\|^2_{H} + \frac{|\varepsilon|}{2\eta}\|v\|^2_{H} = \frac{|\varepsilon|}{2\eta}\|(u,v)\|^2_{E}.
   $$
 Using (1.5) and the embedding $H^-_b \hookrightarrow L^{r}(\Omega)$ for $r\geq 1$, then for each $\nu >0$, we deduce that
 \begin{eqnarray*}
 \int F(t,x,u) +\int G(t,x,v) &\leq& \nu \|u\|^2_{L^2} + C_\nu \|u\|^{p+1}_{L^{p+1}} + \nu \|v\|^2_{L^2} + C_\nu \|v\|^{q+1}_{L^{q+1}}\\
  &\leq& \frac{\nu}{\eta} \|(u, v)\|^2_{H} + c_1 \|u\|^{p+1}_{H} + c_2 \|v\|^{q+1}_{H}.
 \end{eqnarray*}
 Putting $\nu = \varepsilon/2$, it holds that
  $$\Phi(u,v)\geq \frac{1}{2}\big(1-\frac{2|\varepsilon|}{\eta}\big)\|(u,v)\|_{E}^2 - c_1 \|u\|^{p+1}_{H} - c_2 \|v\|^{q+1}_{H}.
  $$
  \label{}\ \ \ \ Choosing $|\varepsilon| <\eta /2$ and letting
  $$\rho = \min \left\{\Big(\frac{\eta - 2|\varepsilon|}{2c_1 \eta}\Big)^{\frac{1}{p-1}},\ \Big(\frac{\eta - 2|\varepsilon|}{2c_2 \eta}\Big)^{\frac{1}{q-1}}\right\},
  $$
 then for $(u,v)\in E^1$ and $\|(u,v)\|_{E} \leq \rho$, we arrive at
 $$\Phi(u,v)\geq \|u\|^{2}_{H}\Big[\frac{1}{2}\big(1-\frac{2|\varepsilon|}{\eta}\big) - c_1 \|u\|^{p-1}_{H}\Big]
   + \|v\|^{2}_{H}\Big[\frac{1}{2}\big(1-\frac{2|\varepsilon|}{\eta}\big) - c_2 \|v\|^{q-1}_{H}\Big] \geq 0.
 $$
 \label{}\ \ \ \ \ (ii) For $(u,v)\in E^+_b \oplus E^0$, we split $u=u^+ + y$, $v=v^+ + z$, where $u^+$, $v^+ \in H_b^+$, and $y$, $z\in H^0$. Now,
 \begin{eqnarray*}
  \Phi(u,v) &=& -\frac{1}{2}\lag (L+b)u^+, u^+\rag -\frac{1}{2}\lag (L+b)v^+, v^+\rag - \frac{b}{2}\int y^2 - \frac{b}{2}\int z^2 \\
  &-& \varepsilon \int uv - \int F(t,x,u) - \int G(t,x,v).
 \end{eqnarray*}
From (2.6), we get
$$- \int uv \leq \frac{1}{2} \big(\|u^+\|^2_{L^2} + \|y\|^2_{L^2}\big) + \frac{1}{2}\big(\|v^+\|^2_{L^2} + \|z\|^2_{L^2}\big)
\leq \frac{1}{2\eta} \|u^+\|^2_{H} + \frac{1}{2} \|y\|^2_{L^2} + \frac{1}{2\eta} \|v^+\|^2_{H} + \frac{1}{2} \|z\|^2_{L^2}.
$$

 Hence, $F(t,x,u)\geq 0$ and $G(t,x,v)\geq 0$ lead to
  $$\Phi(u,v)\leq -\frac{1}{2}\big(1-\frac{|\varepsilon|}{\eta}\big) \|u^+\|^2_{H} - \frac{1}{2}\big(1-\frac{|\varepsilon|}{\eta}\big) \|v^+\|^2_{H} - \frac{1}{2}\big(b- |\varepsilon|\big) \|y\|^2_{L^2} - \frac{1}{2}\big(b- |\varepsilon|\big) \|z\|^2_{L^2}.
  $$
 Then, for $(u,v)\in E^2$, we have $\Phi(u,v)\leq 0$ by selecting $|\varepsilon| <\min\{\eta, b\}$.
\end{proof}
   {\bf {Proof of \ {\rm (A3)}}:}\\
\label{}\ \ \ \ Concerning with the bound of $\Phi$ in a bounded set, we have:\\[0.3em]
 \textit{{\bf Lemma 3.4.}} \emph{$\Phi$ maps bounded sets into bounded sets.}
\begin{proof}
 Let $R_0>0$ and $D=\{(u,v)\in E~\big|~\|(u,v)\|_E \leq R_0\}$, we claim that there exists a constant $M_0 >0$, such that $\Phi(u,v)\leq M_0$ for each $(u,v)\in D$.\par
   In fact, we decompose $u=u^+ + u^- + y$, $v=v^+ +v^- +z$, where $u^+$, $v^+\in H^+_b$, $u^-$, $v^-\in H^-_b$, and $y$, $z\in H^0$. Then
 combining (2.5), (2.7) with the fact $F(t,x,u)\geq 0$ and $G(t,x,v)\geq 0$, we obtain
 \begin{eqnarray*}
   \Phi(u,v)&\leq& \frac{1}{2}\|u^-\|_{H}^2 + \frac{1}{2}\|v^-\|_{H}^2 - \varepsilon \int uv
  \ \leq \ \frac{1}{2}\|u\|_{H}^2 + \frac{1}{2}\|v\|_{H}^2 + \frac{|\varepsilon|}{2}\|u\|_{L^2}^2 + \frac{|\varepsilon|}{2}\|v\|_{L^2}^2\\
   &\leq& \frac{1}{2}\|(u,v)\|_{E}^2 + \frac{\kappa|\varepsilon|}{2} \|u\|^2_H + \frac{\kappa|\varepsilon|}{2} \|v\|^2_H
   \ \leq\ \frac{1+\kappa|\varepsilon|}{2}R_0^2\ :=\  M_0,
 \end{eqnarray*}
 for each $(u,v)\in D$. That is what we desire.
\end{proof}
   {\bf  {Proof of \ {\rm (A4)}}:}\\
\label{}\ \ \ \ We move to verify that $\Phi$ holds the last condition of Proposition A. \\[0.3em]
 \textit{{\bf Lemma 3.5.}} \emph{For every $m\in {\mathbb {N}}$, and $(u,v)\in E_{m}^1 \oplus E^2$, we have $\Phi(u,v)\rightarrow -\infty$, as $\|(u,v)\|_E \rightarrow \infty$ and $\varepsilon$ is sufficiently small.}
\begin{proof}
  Let $u=u^+ +u^- +y$, $v=v^+ +v^- +z$, for $(u^+, v^+)\in E^+_b$, $(y, z)\in E^0$, and $(u^-, v^-)\in E^1_m = span\{e_1^1, \cdots, e_m^1\}$, where $(e_{n}^1)_{n=1}^{\infty}$ is a basis for $E^1 = E^-_b$.\par
    With the aid of (2.6), the coupled term in (2.5) can be controlled by
    $$-\varepsilon \int uv \leq \frac{|\varepsilon|}{2}\big(\|u\|_{L^2}^2 +\|v\|_{L^2}^2\big) \leq \frac{|\varepsilon|}{2\eta}\big(\|u^+\|_H^2 + \|u^-\|_H^2 + \|v^+\|_H^2 + \|v^-\|_H^2\big)
     + \frac{|\varepsilon|}{2}\big(\|y\|_{L^2}^2 + \|z\|_{L^2}^2\big).
    $$
 \label{}\ \ \ \    To deal with the nonlinear forced terms in (2.5), we utilize (1.3) and the embeddings $L^{p+1}(\Omega)\hookrightarrow L^2(\Omega)$, $L^{q+1}(\Omega)\hookrightarrow L^2(\Omega)$ for $p$, $q>1$, to get
    \begin{eqnarray*}
     -\int F(t,x,u) - \int G(t,x,v) \leq -c_1 \|u\|^{p+1}_{L^{p+1}} - c_1 \|v\|^{q+1}_{L^{q+1}} +c_2
      \leq -c_3 \|u\|^{p+1}_{L^{2}} - c_3 \|v\|^{q+1}_{L^{2}} +c_2.
    \end{eqnarray*}
    Noting the dimension of $E^1_m$ is finite, and the norms in the function space $E_m^1$ are equivalent, then for $(u^- , v^-)$ in $E^1_m$,
    $\|u^-\|_H^{p+1} \leq c \|u^-\|_{L^2}^{p+1} \leq c \|u\|_{L^2}^{p+1}$, and $\|v^-\|_H^{q+1} \leq c \|v\|_{L^2}^{q+1}$. Inserting
 the preceding estimates into (2.5), we have
  \begin{eqnarray*}
   \Phi(u,v) &\leq& -\frac{1}{2}\big(1-\frac{|\varepsilon|}{\eta}\big)\big(\|u^+\|_H^2 + \|v^+\|_H^2 \big)
     - \frac{1}{2}(b-|\varepsilon|)\big(\|y\|_{L^2}^2 + \|z\|_{L^2}^2\big)\\
     &&+\ \frac{1}{2}\big(1+\frac{|\varepsilon|}{\eta}\big)\|u^-\|_H^2 - c_4 \|u^-\|_H^{p+1}
     +\frac{1}{2}\big(1+\frac{|\varepsilon|}{\eta}\big)\|v^-\|_H^2 - c_4 \|v^-\|_H^{q+1} +c_2.
  \end{eqnarray*}
\label{}\ \ \ \  As $\|(u,v)\|_E = \big(\|u\|_H^2 + \|v\|_H^2\big)^{1/2} \rightarrow \infty$, then:
(i) $\|u^+\|_H^2 + \|v^+\|_H^2 + \|y\|_{L^2}^2 + \|z\|_{L^2}^2 \rightarrow \infty$, or (ii) $\|u^-\|_H^2 + \|v^-\|_H^2 \rightarrow \infty$ holds.\par
  If (i) satisfies, then there exists $C>0$, such that
$$\frac{1}{2}\big(1+\frac{|\varepsilon|}{\eta}\big)\|u^-\|_H^2 - c_4 \|u^-\|_H^{p+1}
     +\frac{1}{2}\big(1+\frac{|\varepsilon|}{\eta}\big)\|v^-\|_H^2 - c_4 \|v^-\|_H^{q+1} \leq C,
$$
for $p$, $q>1$. Hence, it follows that $\Phi(u,v)\rightarrow -\infty$ by selecting $|\varepsilon| <\min\{\eta, b\}$, as $\|u^+\|_H^2 + \|v^+\|_H^2 + \|y\|_{L^2}^2 + \|z\|_{L^2}^2 \rightarrow \infty$.\par
   If (ii) holds, then
   $$\frac{1}{2}\big(1+\frac{|\varepsilon|}{\eta}\big)\|u^-\|_H^2 - c_4 \|u^-\|_H^{p+1}
     +\frac{1}{2}\big(1+\frac{|\varepsilon|}{\eta}\big)\|v^-\|_H^2 - c_4 \|v^-\|_H^{q+1} \rightarrow -\infty
$$
 by virtue of $p$, $q>1$. We derive that $\Phi(u,v)\rightarrow -\infty$ as $\|u^-\|_H^2 + \|v^-\|_H^2 \rightarrow \infty$, for  $|\varepsilon| <\min\{\eta, b\}$. The conclusion of Lemma 3.5 is thereby obtained.
\end{proof}
    Now, we have proved that the functional $\Phi \in C^1 (E, {\mathbb{R}})$ satisfy the conditions (A1)$-$(A4) of the Proposition A, which ensure
us to construct a nontrivial critical point $(u, v)$ of $\Phi$ in $E$. Hence, we finish the proof of Theorem 1.1.\hfill $\Box$

\section{Asymptotic behavior of the solutions as $\varepsilon\rightarrow 0$}
\setcounter{equation}{0}
\label{} \ \ \ \ \ In the following, we use $c_i$ and $C$ to denote positive constants which are independent of $\varepsilon$, and whose value may differ from line to line.\par
   Let $(u_\varepsilon, v_\varepsilon)$ be the solution of ${\rm(1.1)}_{a, b, c}$ obtained in Theorem 1.1 for $|\varepsilon|< \varepsilon_0$, we know
 \begin{eqnarray*}
   &&\lag(L+b)u_{\varepsilon}, u_\varepsilon\rag + \varepsilon \int u_\varepsilon v_\varepsilon + \int f(t,x,u_\varepsilon)u_\varepsilon =0,\\
   &&\lag(L+b)v_{\varepsilon}, v_\varepsilon\rag + \varepsilon \int u_\varepsilon v_\varepsilon + \int g(t,x,v_\varepsilon)v_\varepsilon =0.
 \end{eqnarray*}
 Then, by (2.2), (1.3) and (h3) of Theorem 1.1, we have
   \begin{eqnarray}
     \Phi(u_\varepsilon, v_\varepsilon)&=& \frac{1}{2}\int\big[f(t,x,u_\varepsilon)u_\varepsilon-F(t,x,u_\varepsilon)\big]+ \frac{1}{2}\int\big[g(t,x,v_\varepsilon)v_\varepsilon-G(t,x,v_\varepsilon)\big]  \nonumber\\
     &\geq& \frac{p-1}{2}\int F(t,x,u_\varepsilon) + \frac{q-1}{2}\int G(t,x,v_\varepsilon)  \nonumber\\
     &\geq& \frac{(p-1)c_1}{2}\int |u_\varepsilon|^{p+1} + \frac{(q-1)c_1}{2}\int |v_\varepsilon|^{q+1} - 2 c_2 \pi^2.
   \end{eqnarray}
   \indent On the other hand, we deduce from Remark 2.1 together with Lemma 3.4 and Lemma 3.5 that, there exists a positive number $c_3$ independent of $\varepsilon$, such that
   \begin{eqnarray}
     \Phi(u_\varepsilon, v_\varepsilon) \leq c_3,\ \ \ \ \ \ \ {\rm for\ any}\ \varepsilon \in (-\varepsilon_0, \varepsilon_0).
   \end{eqnarray}
 \indent Combining with (4.1), (4.2) and by virtue of $p$, $q>1$, we get
 \begin{eqnarray}
     \|u_\varepsilon\|_{L^{p+1}} + \|v_\varepsilon\|_{L^{q+1}} \leq C,\ \ \ \ \ {\rm for\ any}\ \varepsilon \in (-\varepsilon_0, \varepsilon_0).
   \end{eqnarray}

 Let $\varepsilon_n \in (-\varepsilon_0, \varepsilon_0)$ be any sequence with $\varepsilon_n \rightarrow 0$ as $n\rightarrow \infty$. Subsequently, we will prove that $(u_{\varepsilon_n}, v_{\varepsilon_n})$ converge strongly to some $(U_0, V_0)$ in $L^2 (\Omega) \times L^2 (\Omega)$ as $n\rightarrow \infty$, by passing to a subsequence, and justify that $U_0$, $V_0$ are weak solutions of the scalar wave equations (W1) and (W2) respectively.\par
    At first, we decompose $u_{\varepsilon_n}^{\ } = u^+_{\varepsilon_n} + u^-_{\varepsilon_n} + y^{\ }_{\varepsilon_n}$, $v_{\varepsilon_n}^{\ } = v^+_{\varepsilon_n} + v^-_{\varepsilon_n} + z^{\ }_{\varepsilon_n}$, where $(u^+_{\varepsilon_n}, v^+_{\varepsilon_n}) \in E^+_b$, $(u^-_{\varepsilon_n}, v^-_{\varepsilon_n}) \in E^-_b$, and $(y^{\ }_{\varepsilon_n}, z^{\ }_{\varepsilon_n}) \in E^0$, and show the next lemma concerning with the asymptotic behavior of $(u_{\varepsilon_n}, v_{\varepsilon_n})$.\\[0.3em]
 \textit{{\bf Lemma 4.1.}} \emph{Passing to a subsequence of $\varepsilon_n \rightarrow 0$ as $n\rightarrow \infty$, we have}\par
 (i) $(u^+_{\varepsilon_n}, v^+_{\varepsilon_n})$ \emph{converge strongly to some} $(U^+_0,V^+_0)$ \emph{in} $E^+_b$;\par
 (ii) $(u^-_{\varepsilon_n}, v^-_{\varepsilon_n})$ \emph{converge strongly to some} $(U^-_0,V^-_0)$ \emph{in} $E^-_b$;\par
 (iii) $(y^{\ }_{\varepsilon_n}, z^{\ }_{\varepsilon_n})$ \emph{converge strongly to some} $(U^0_0,V^0_0)$ \emph{in} $E^0$;\par
  (iv) $u^{\ }_{\varepsilon_n}\rightharpoonup U_0$  \emph{weakly in} $L^{p+1}(\Omega)$, \emph{and} $v^{\ }_{\varepsilon_n}\rightharpoonup V_0$  \emph{weakly in $L^{q+1}(\Omega)$, where $U_0 = U_0^+ +U^-_0 + U^0_0$, $V_0 = V_0^+ +V^-_0 + V^0_0$.}
\begin{proof} First, we establish the uniform bound for $\{(u_{\varepsilon_n}, v_{\varepsilon_n})\}$ in $E$.\par
   Recording (4.3) and the embedding properties of $L^{r}(\Omega)\hookrightarrow L^2(\Omega)$ for $r\geq 2$, we have
  \begin{eqnarray}
    \|y^{\ }_{\varepsilon_n}\|^2_{L^{2}} + \|z^{\ }_{\varepsilon_n}\|^2_{L^{2}} \leq \|u_{\varepsilon_n}\|^2_{L^{2}} + \|v_{\varepsilon_n}\|^2_{L^{2}} \leq c_4 \|u_{\varepsilon_n}\|^2_{L^{p+1}} + c_4 \|v_{\varepsilon_n}\|^2_{L^{q+1}} \leq C.
  \end{eqnarray}

  For $(u^+_{\varepsilon_n}, v^+_{\varepsilon_n}) \in E^+_b$, by (h1), (2.8), (4.3) and the orthogonality of $H_b^+$, $H_b^-$, $H^0$,
   \begin{eqnarray}
  && \|u^+_{\varepsilon_n}\|_H ^2 + \|v^+_{\varepsilon_n}\|_H ^2\ =\ \lag(L+b)u_{\varepsilon_n}, u^+_{\varepsilon_n}\rag + \lag(L+b)v_{\varepsilon_n}, v^+_{\varepsilon_n}\rag \nonumber\\
  &=&  - 2\varepsilon_n \int u^+_{\varepsilon_n} v^+_{\varepsilon_n} - \int f(t,x,u_{\varepsilon_n})u^+_{\varepsilon_n} - \int g(t,x,v_{\varepsilon_n})v^+_{\varepsilon_n}\nonumber \\
  &\leq& c_5\big( \|u_{\varepsilon_n}\|_{L^{2}} \|v_{\varepsilon_n}\|_{L^{2}} +  \|u_{\varepsilon_n}\|_{L^2} +  \|u_{\varepsilon_n}\|_{L^{p+1}}^{p} \|u^+_{\varepsilon_n}\|_{H} +  \|v_{\varepsilon_n}\|_{L^2} + \|v_{\varepsilon_n}\|_{L^{q+1}}^{q} \|v^+_{\varepsilon_n}\|_{H}\big)\nonumber  \\
  &\leq& C(1+\|(u^+_{\varepsilon_n}, v^+_{\varepsilon_n})\|_E).
 \end{eqnarray}

Similarly, we get
 \begin{eqnarray}
  && \|u^-_{\varepsilon_n}\|_H ^2 + \|v^-_{\varepsilon_n}\|_H ^2\ =\ -\lag(L+b)u_{\varepsilon_n}, u^-_{\varepsilon_n}\rag - \lag(L+b)v_{\varepsilon_n}, v^-_{\varepsilon_n}\rag \nonumber\\
  &\leq& C(1+\|(u^-_{\varepsilon_n}, v^-_{\varepsilon_n})\|_E),\qquad\qquad {\rm for}\ (u^-_{\varepsilon_n}, v^-_{\varepsilon_n}) \in E^-_b.
 \end{eqnarray}

  Therefore, summing up $(4.4)$-$(4,6)$, we arrive at
  $$\|(u_{\varepsilon_n}, v_{\varepsilon_n})\|^2_E \leq C(1+\|(u_{\varepsilon_n}, v_{\varepsilon_n})\|_E),
  $$
which implies that
\begin{eqnarray}
\|(u_{\varepsilon_n}, v_{\varepsilon_n})\|_E \leq C_0, \ \ \ \ {\rm where}\ C_0>0 \ {\rm is\  independent\ of}\ \varepsilon_n.
 \end{eqnarray}
Then, $\{(u_{\varepsilon_n}^{\ }, v_{\varepsilon_n}^{\ })\}$ converge weakly to some $(U_0,V_0)\in E$, by passing to a subsequence of $\varepsilon_n \rightarrow 0$ as $n\rightarrow \infty$. Spilt $U_0 = U^+_0 + U^-_0 + U^0_0$, $V_0 = V^+_0 + V^-_0 + V^0_0$, where $(U^+_0,V^+_0) \in E^+_b$, $(U^-_0,V^-_0) \in E^-_b$ and $(U^0_0,V^0_0) \in E^0$. We assert that $(U^+_0,V^+_0)$, $(U^-_0,V^-_0)$ and $(U^0_0,V^0_0)$ satisfy (i), (ii), (iii) of this lemma.\par
   Since $\Phi'(u_{\varepsilon_n}, v_{\varepsilon_n})=0$, and by the orthogonality of $H_b^+$, $H_b^-$, $H^0$, it follows that
\begin{eqnarray*}
 && \|u^+_{\varepsilon_n} - U^+_0\|_H ^2 = \lag(L+b)(u^+_{\varepsilon_n}-U^+_0), u^+_{\varepsilon_n}-U^+_0\rag\nonumber \\
   &=& \lag(L+b)u_{\varepsilon_n}, u^+_{\varepsilon_n}-U^+_0\rag - \lag(L+b)U_0, u^+_{\varepsilon_n}-U^+_0\rag \nonumber \\
   &=& -\varepsilon_n \int v_{\varepsilon_n} (u^+_{\varepsilon_n}-U^+_0) - \int f(t,x,u_{\varepsilon_n})(u^+_{\varepsilon_n}-U^+_0)  - \lag(L+b)U_0, u^+_{\varepsilon_n}-U^+_0\rag.
\end{eqnarray*}
   By the weak convergence of $u^+_{\varepsilon_n} \rightharpoonup U^+_0$ as $n\rightarrow \infty$, we can proceed as in the proof of (3.10), (3.13) and (3.15) in Lemma 3.2 to show that $u^+_{\varepsilon_n} \rightarrow U^+_0$ strongly in $E^+_b$, as $n\rightarrow \infty$. Then (i) holds. We can prove (ii) in the same way.\par
   For $y^{\ }_{\varepsilon_n} \in H^0$, $z^{\ }_{\varepsilon_n} \in H^0$,
 \begin{eqnarray*}
  && -\lag \Phi'(u^{\ }_{\varepsilon_n}, v^{\ }_{\varepsilon_n}),\ (y^{\ }_{\varepsilon_n} - U_0^0, z^{\ }_{\varepsilon_n} - V_0^0)\rag\\
   &=& \lag Lu^{\ }_{\varepsilon_n},  y^{\ }_{\varepsilon_n}-U_0^0 \rag
     + b\int u^{\ }_{\varepsilon_n} (y^{\ }_{\varepsilon_n}-U_0^0) +\varepsilon_n \int v^{\ }_{\varepsilon_n} (y^{\ }_{\varepsilon_n}-U_0^0)+ \int f(t,x,u^{\ }_{\varepsilon_n})(y^{\ }_{\varepsilon_n} - U_0^0)\\
  && +\ \lag Lv^{\ }_{\varepsilon_n},  z^{\ }_{\varepsilon_n}-V_0^0 \rag   + b\int v^{\ }_{\varepsilon_n} (z^{\ }_{\varepsilon_n}-V_0^0) +\varepsilon_n \int u^{\ }_{\varepsilon_n} (z^{\ }_{\varepsilon_n}-V_0^0)+ \int g(t,x,v^{\ }_{\varepsilon_n})(z^{\ }_{\varepsilon_n} - V_0^0).
  \end{eqnarray*}
By virtue of $\Phi'(u^{\ }_{\varepsilon_n}, v^{\ }_{\varepsilon_n})=0$ and $\lag Lu^{\ }_{\varepsilon_n},  y^{\ }_{\varepsilon_n}-U_0^0 \rag = \lag Lv^{\ }_{\varepsilon_n},  z^{\ }_{\varepsilon_n}-V_0^0 \rag =0$, we have
 \begin{eqnarray*}
  && b\int u^{\ }_{\varepsilon_n} (y^{\ }_{\varepsilon_n}-U_0^0) +\varepsilon_n \int v^{\ }_{\varepsilon_n} (y^{\ }_{\varepsilon_n}-U_0^0) + b\int v^{\ }_{\varepsilon_n} (z^{\ }_{\varepsilon_n}-V_0^0) +\varepsilon_n \int u^{\ }_{\varepsilon_n} (z^{\ }_{\varepsilon_n}-V_0^0) \\
   &=& - \int f(t,x,u^{\ }_{\varepsilon_n})(y^{\ }_{\varepsilon_n} - U_0^0) - \int g(t,x,v^{\ }_{\varepsilon_n})(z^{\ }_{\varepsilon_n} - V_0^0).
  \end{eqnarray*}
 By an analogue proof of $(3.18)-(3.21)$ in Lemma 3.2, we deduce that
 $$\|y^{\ }_{\varepsilon_n}-U^0_0\|^2_{L^2} + \|z^{\ }_{\varepsilon_n}-V^0_0\|^2_{L^2} \rightarrow 0,\ \ \ \ {\rm{as}}\ n\rightarrow \infty,
 $$
and (iii) is satisfied.\par
    As a consequence of (i), (ii), (iii), we have $\{(u_{\varepsilon_n}^{\ }, v_{\varepsilon_n}^{\ })\}$ converge strongly to
$(U_0,V_0)$ in $E$. Particularly,
\begin{eqnarray}
(u_{\varepsilon_n}^{\ },\ v_{\varepsilon_n}^{\ })\rightarrow (U_0,\ V_0)\ {\rm strongly\ in}\ L^2(\Omega)\times L^2(\Omega),\ \ {\rm as} \ n\rightarrow \infty.
\end{eqnarray}

   On the other hand, (4.3) implies that there exists some $U_1 \in L^{p+1}(\Omega)$, such that $\{u_{\varepsilon_n}^{\ }\}$ possesses a subsequence which converge weakly to $U_1$ in $L^{p+1}(\Omega)$. By virtue of $L^{p+1}(\Omega)\hookrightarrow L^{2}(\Omega)$ for $p>1$, and noting the fact that the weak limit of $\{u_{\varepsilon_n}^{\ }\}$ is unique, we have $U_1=U_0$.
Thus, $u_{\varepsilon_n}^{\ } \rightharpoonup U_0$ weakly in $L^{p+1}(\Omega)$. Similarly, we get $v_{\varepsilon_n}^{\ } \rightharpoonup V_0$ weakly in $L^{q+1}(\Omega)$. Hence, (iv) holds.
\end{proof}

  Consequently, we assert \\[0.3em]
 \textit{{\bf Lemma 4.2.}} \emph{For any $\varphi \in H\cap L^{p+1}(\Omega)$, $\psi \in H\cap L^{q+1}(\Omega)$, then passing to a subsequence of $\varepsilon_n \rightarrow 0$ as $n\rightarrow \infty$, we have}
   \begin{eqnarray}
  && \lag(L+b)u_{\varepsilon_n},\ u_{\varepsilon_n} - \varphi \rag \rightarrow \lag(L+b)U_0,\ U_0 - \varphi \rag, \\
  && \lag(L+b)v_{\varepsilon_n},\ v_{\varepsilon_n} - \psi \rag \rightarrow \lag(L+b)V_0,\ V_0 - \psi \rag; \\
 && \lag f(t,x,\varphi),\ u_{\varepsilon_n} - \varphi \rag \rightarrow \lag f(t,x,\varphi),\ U_0 - \varphi \rag, \\
 && \lag g(t,x,\psi),\ v_{\varepsilon_n} - \psi \rag \rightarrow \lag g(t,x,\psi),\ V_0 - \psi \rag; \\
 && \lag\varepsilon_n  v_{\varepsilon_n},\ u_{\varepsilon_n} - \varphi \rag \rightarrow 0,\ \ {\rm and}\ \ \lag \varepsilon_n  u_{\varepsilon_n},\ v_{\varepsilon_n} - \psi \rag \rightarrow 0.
   \end{eqnarray}
 \begin{proof}
   To reach (4.9), we write
   \begin{eqnarray}
     && \lag(L+b)u_{\varepsilon_n},\ u_{\varepsilon_n} - \varphi \rag - \lag(L+b)U_0,\ U_0 - \varphi \rag \nonumber\\
   &=& \lag(L+b)u_{\varepsilon_n},\ u_{\varepsilon_n} - U_0 \rag + \lag(L+b)(u_{\varepsilon_n}-U_0),\ U_0 - \varphi \rag := A_1 + A_2.
   \end{eqnarray}
   From  the orthogonality of $H_b^+$, $H_b^-$, $H^0$, it follows that
   \begin{eqnarray*}
     A_1 = \lag(L+b)u^+_{\varepsilon_n},\ u^+_{\varepsilon_n} - U^+_0 \rag + \lag(L+b)u^-_{\varepsilon_n},\ u^-_{\varepsilon_n} - U^-_0 \rag + b \lag y_{\varepsilon_n},\ y_{\varepsilon_n} - U^0_0 \rag.
   \end{eqnarray*}
   Then, by (4.7), Lemma 4.1 and using H\"{o}lder inequality, we infer
   \begin{eqnarray*}
     |A_1| \leq \|u^+_{\varepsilon_n}\|_{H} \|u^+_{\varepsilon_n}-U^+_0\|_{H} + \|u^-_{\varepsilon_n}\|_{H} \|u^-_{\varepsilon_n}-U^-_0\|_{H} + b \|y_{\varepsilon_n}\|_{L^2} \|y_{\varepsilon_n}-U^0_0\|_{L^2} \rightarrow 0,
   \end{eqnarray*}
   as $n\rightarrow \infty$. Furthermore, since $\varphi \in H$, we deduce that $A_2 \rightarrow 0$ in the same way. Thus, (4.9) holds, and we can obtain (4.10) similarly.\par
     We come to prove (4.11). By condition (h1) of Theorem 1.1, we have
     \begin{eqnarray}
       |\lag f(t,x,\varphi),\ u_{\varepsilon_n} - U_0 \rag| \leq c_0 \int |u_{\varepsilon_n} - U_0| +c_0 \int |\varphi|^p |u_{\varepsilon_n} - U_0|.
     \end{eqnarray}
  Then (4.8), (iv) of Lemma 4.1 and $|\varphi|^p \in L^{\frac{p+1}{p}}(\Omega) = (L^{p+1}(\Omega))^*$ show the right hand side of (4.15) go to zero as $n\rightarrow \infty$, which gives (4.11). Furthermore, (4.12) also holds for any $\psi \in H\cap L^{q+1}(\Omega)$.\par
   At last, by (4.7) and H\"{o}lder inequality, it follows that
    \begin{eqnarray*}
     &&|\lag\varepsilon_n  v_{\varepsilon_n},\ u_{\varepsilon_n} - \varphi \rag| + |\lag \varepsilon_n  u_{\varepsilon_n},\ v_{\varepsilon_n} - \psi \rag|\\
     &\leq& |\varepsilon_n| \|v_{\varepsilon_n}\|_{L^2} \|u_{\varepsilon_n} - \varphi\|_{L^2} + |\varepsilon_n| \|u_{\varepsilon_n}\|_{L^2} \|v_{\varepsilon_n} - \psi\|_{L^2}\ \leq C|\varepsilon_n|\rightarrow 0,
    \end{eqnarray*}
  as $n\rightarrow \infty$. Hence, we arrive at (4.13).
 \end{proof}

   We are ready for the {\bf proof of Theorem 1.3:}
\begin{proof}
 We are suffice to prove that
\begin{eqnarray}
 \lag (L+b)U_0 + f(t,x,U_0),\ \omega\rag = 0,&&\quad \forall~ \omega \in H\cap L^{p+1}(\Omega),\\
 \lag (L+b)V_0 + g(t,x,V_0),\ \chi\rag = 0,&&\quad \forall~ \chi \in H\cap L^{q+1}(\Omega).
\end{eqnarray}

  As $(u_{\varepsilon_n}, v_{\varepsilon_n})$ solves the problem (1.1)$_{a,b,c}$ with linear coupling constant $\varepsilon = \varepsilon_n$, then
  \begin{eqnarray}
   \lag (L+b)u_{\varepsilon_n} + f(t,x,u_{\varepsilon_n}) + \varepsilon_n v_{\varepsilon_n}, \ u_{\varepsilon_n}-\varphi \rag =0,\ \ \ \ \forall~ \varphi \in H\cap L^{p+1}(\Omega).
  \end{eqnarray}
By (h4) of Theorem 1.1, we know
$$\lag f(t,x,u_{\varepsilon_n})- f(t,x,\varphi),\ u_{\varepsilon_n}-\varphi \rag \geq 0.
$$
Hence, we derive from (4.18) that
\begin{eqnarray}
   \lag (L+b)u_{\varepsilon_n} + f(t,x,\varphi) + \varepsilon_n v_{\varepsilon_n}, \ u_{\varepsilon_n}-\varphi \rag \leq 0,\ \ \ \forall~ \varphi \in H\cap L^{p+1}(\Omega).
  \end{eqnarray}

  By (4.9), (4.11) and (4.13) of Lemma 4.2, then passing to the limit in (4.19), we have
 \begin{eqnarray}
 \lag (L+b)U_0 + f(t,x,\varphi),\ U_0 -\varphi\rag \leq 0, \ \ \ \forall~ \varphi \in H\cap L^{p+1}(\Omega).
\end{eqnarray}
Choosing $\varphi= U_0 - \lambda \omega$ with $\lambda>0$ in (4.20), then dividing by $\lambda$ and letting $\lambda\rightarrow 0$, we obtain
\begin{eqnarray*}
 \lag (L+b)U_0 + f(t,x,U_0),\ \omega\rag \leq 0,
\end{eqnarray*}
and noting $\omega \in H\cap L^{p+1}(\Omega)$ is chosen arbitrarily, we infer that $U_0$ satisfies
\begin{eqnarray*}
 \lag (L+b)U_0 + f(t,x,U_0),\ \omega\rag = 0, \ \ \ \forall~ \omega \in H\cap L^{p+1}(\Omega).
\end{eqnarray*}
Thereby, (4.16) is concluded.\par
  In the same manner, we obtain (4.17) by virtue of (4.10), (4.12) and (4.13). This concludes the proof of Theorem 1.3.
\end{proof}

\section{Higher regularity of the solutions}
\setcounter{equation}{0}
\label{}\ \ \ \ In this section, we prove the solutions $(u,v)\in E$ obtained in the previous section enjoy the higher regularity, providing that $\varepsilon$ is sufficiently small.

\subsection{A representation theorem for single wave equation}
\label{}\ \ \ \ \
   To proceed, we collect some facts that are useful in improving the regularity of the weak solutions for (1.1)$_{a,b,c}$.\par
   Let $\Omega=[0,2\pi]\times [0,\pi]$, and $\ker L$ is the kernel of the d'Alembert operator $L=\partial_{t} ^2 - \partial_{x} ^2$.
Denote $R(L)$ is the range of $L$, then we have $R(L)=(\ker L)^\bot$. The following representation theorem plays an important role in the study of the regularity theory of scalar wave equations.
  \begin{prop}
  {\rm(See \cite{Bre-Cor-Nir})} Given $h\in L^1 (\Omega) \cap R(L)$ satisfying $h(t+2\pi, x)=h(t,x)$, then the solution of the wave equation
   \begin{equation*}
  \qquad\qquad\quad\qquad\qquad\left\{\begin{array}{l}
    Lw\equiv w_{tt}-w_{xx}=h(t,x),\;\  (t,x)\in \Omega,
    \\[1.2ex]
     w(t,0)=w(t,\pi)=0, \;\ w(t+2\pi,x)=w(t,x),
      \end{array}\right.   \qquad\qquad\qquad {\rm(W)}
  \end{equation*}
 can be presented in the form of $w=w_0 + w_1$, where $w_0 \in \ker L$, and $w_1 \in (\ker L)^\bot = R(L)$. More precisely, we have
  $$w_0 (t,x)= p(t+x)-p(t-x),
  $$
   for some $p\in L^1$ which is $2\pi$-periodic and satisfying $\int_0^{2\pi} p(\tau) \dif \tau =0$, and
 $$w_1 (t,x)=-\frac{1}{2} \int_{x}^{\pi}\dif \xi \int_{t+x-\xi}^{t-x+\xi} h(\tau, \xi)\dif \tau +
  \frac{\pi - x}{2\pi} \int_{0}^{\pi}\dif \xi \int_{t-\xi}^{t+\xi} h(\tau, \xi)\dif \tau.
 $$
 \end{prop}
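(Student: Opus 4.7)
The plan is to verify that the explicit $w_1$ stated is a particular solution of $Lw_1=h$ satisfying the Dirichlet and $2\pi$-time-periodicity conditions, and then to identify $w_0:=w-w_1$ as an element of $\ker L$ via the d'Alembert characterization of the homogeneous solutions.

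First I would verify $Lw_1=h$. Setting $u(t,x,\xi):=\int_{t+x-\xi}^{t-x+\xi}h(\tau,\xi)\,d\tau$, a direct Leibniz calculation gives $u_{tt}=u_{xx}$ pointwise in $\xi$, so the $L$-action on $-\tfrac12\int_x^\pi u(t,x,\xi)\,d\xi$ reduces to the boundary terms generated by differentiating the lower limit $\xi=x$. Since $u(t,x,x)=0$ and $u_x(t,x,x)=-2h(t,x)$, this produces exactly $h(t,x)$. The correcting summand $\frac{\pi-x}{2\pi}G(t)$, with $G(t):=\int_0^\pi d\xi\int_{t-\xi}^{t+\xi}h(\tau,\xi)\,d\tau$, is designed to enforce the boundary condition at $x=0$ while preserving that at $x=\pi$. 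Crucially, the hypothesis $h\in R(L)$ forces $G$ to be \emph{constant} in $t$: expanding $h(\tau,\xi)=\sum a_{jk}\sin(j\xi)\me^{\mi k\tau}$ and using $\int_0^\pi\sin(j\xi)\sin(k\xi)\,d\xi=\mathrm{sgn}(k)(\pi/2)\delta_{j,|k|}$, the $k\neq 0$ modes of $G$ select precisely the resonant coefficients $a_{jk}$ with $j=|k|$, all of which vanish because $h\perp\ker L$; the residual $k=0$ mode contributes a time-independent quantity. Hence $LG=0$ and $Lw_1=h$ follows.

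Next I would verify the boundary and periodicity conditions on $w_1$. At $x=\pi$ both summands vanish identically, and at $x=0$ they coincide up to sign and cancel, giving $w_1(t,0)=w_1(t,\pi)=0$. The $2\pi$-periodicity of $w_1$ in $t$ is immediate from the periodicity of $h$ via a shift in the inner $\tau$-integration. The orthogonality $w_1\in R(L)=(\ker L)^\perp$ follows by pairing $w_1$ against an arbitrary element of $\ker L$ (which by the characterization below has the form $p(t+x)-p(t-x)$) and applying Fubini to reduce the pairing to an inner product of $h$ with another element of $\ker L$, which vanishes by hypothesis. I expect the verification that $G$ is constant---where the abstract orthogonality $h\in R(L)$ must be translated into a concrete pointwise identity on the explicit kernel---to be the main technical obstacle.

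Finally I would characterize $\ker L$ by d'Alembert: the general homogeneous solution is $\phi(t+x)+\psi(t-x)$; the Dirichlet condition $w(t,0)=0$ forces $\psi=-\phi$, and both $w(t,\pi)=0$ and time-periodicity reduce to $\phi$ being $2\pi$-periodic. Imposing $\int_0^{2\pi}p(\tau)\,d\tau=0$ removes the additive-constant ambiguity and fixes a unique representative. Since $L(w-w_1)=0$ and $w-w_1$ inherits the Dirichlet and periodicity conditions, $w_0:=w-w_1$ lies in $\ker L$ and has the claimed form $p(t+x)-p(t-x)$, completing the decomposition.
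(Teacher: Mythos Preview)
The paper does not supply its own proof of this proposition; it is quoted verbatim from Br\'ezis--Coron--Nirenberg \cite{Bre-Cor-Nir} (and ultimately Lovicarov\'a \cite{Lo}) as a known representation theorem, and only its consequences (5.1)--(5.4) are used downstream. So there is no ``paper's proof'' to compare against, and any correct verification is acceptable.

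Your plan is the standard direct one and is correct in outline. The Leibniz computation giving $Lw_1^{(1)}=h$, the Fourier argument showing $G'(t)\equiv 0$ because the resonant coefficients $a_{|k|,k}$ vanish for $h\in R(L)$, the boundary checks at $x=0,\pi$, and the d'Alembert characterization of $\ker L$ are all fine. One place where your sketch is thinner than the rest is the claim $w_1\in(\ker L)^\perp$. The second summand $\tfrac{\pi-x}{2\pi}G$ is a function of $x$ alone, hence carries only $k=0$ Fourier modes and is automatically orthogonal to $\ker L$; but your proposed reduction of $\langle w_1^{(1)},\phi\rangle$ to an inner product of $h$ with another kernel element via Fubini is not quite how the computation closes. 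The clean route is either to pass to characteristic coordinates $\alpha=t+x$, $\beta=t-x$ (where the formula for $w_1^{(1)}$ becomes a double primitive whose $\alpha$- and $\beta$-averages are explicit), or to compute the Fourier coefficients of $w_1$ directly and observe that the explicit integral annihilates the $j=|k|$ modes. Either way this is a short extra calculation, not a gap in the strategy.
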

 \textit{Estimates of the component $w_1$ in $R(L)=(\ker L)^\bot$}: \par\medskip
  From Proposition 5.1, the $L^\infty$-norm of $w_1 \in R(L)$ can be controlled by
  \begin{eqnarray}
  \|w_1\|_{L^\infty}\leq c\|h\|_{L^1}, \ \ {\rm for}\ h\in L^1\cap R(L).
  \end{eqnarray}
Furthermore, if $h\in L^\infty (\Omega)\cap R(L)$, then we have
 \begin{eqnarray}
 \|w_1\|_{C^{0,1}}\leq c\|h\|_{L^\infty},
  \end{eqnarray}
where $\|\cdot\|_{C^{0,1}}$ is the norm of the Lipschitz space $C^{0,1}(\Omega)$.\par\medskip
 \textit{An Integral Formula concerning with the component in $\ker L$}: \par\medskip
   If $p(s)$, $q(s)$ are $L^1$ functions with period $2\pi$ and satisfy $\int_0^{2\pi} p(s) \dif s =\int_0^{2\pi} q(s) \dif s =0$, then a computation as in \cite{Ra} shows that
   \begin{eqnarray}
   \iint_{\Omega}p(t+x)q(t-x)\dif t \dif x =0.
   \end{eqnarray}
\label{}\ \ \ \    We also require the next property to characterize \textit{the range of the operator} $L$:\par
    A function $h$ belongs to $R(L)$ if and only if
    \begin{eqnarray}
    \int_{0}^\pi [h(t+x,x)-h(t-x,x)]\dif x =0.
    \end{eqnarray}
In other words, the sufficient and necessary condition for the solvability of linear wave equation (W) is that the function $h(t,x)$ satisfies (5.4).  We see \cite{Bre-Cor-Nir, Lo} for more details.\par
\subsection{Proof of Theorem 1.4}
 \label{}\ \ \ \  Let $(u,v)\in E$ be a solution of (1.1)$_{a,b,c}$ constructed by local linking method. We decompose it into
$(u,v)=(u_1 +y, v_1 +z)$, where $u_1$, $v_1\in H_{b}^+ \oplus H_{b}^- \equiv R(L)$,  and $y$, $z\in H^0 \equiv \ker L$. We apply Proposition 5.1 to represent $y,\ z$ by
$$y= p(t+x)- p(t-x)\ \ {\rm and}\ \ z= q(t+x)- q(t-x),
$$
 for some $2\pi$-periodic
functions $p,\ q\in L^1 ([0, 2\pi])$ such that $\int_0^{2\pi} p(\tau) \dif \tau =\int_0^{2\pi} q(\tau) \dif \tau =0$.\par\medskip
\textit{The $L^\infty-$regularity of the components in $H_{b}^+ \oplus H_{b}^-$}:\par\smallskip
    Since $(u,v)$ satisfy the system
      \begin{equation*}
  \left\{\begin{array}{l}
     u_{tt}- u_{xx}  + bu + \varepsilon v + f(t,x,u) = 0,
    \\[1.2ex]
     v_{tt}- v_{xx} + bv + \varepsilon u + g(t,x,v) = 0,
 \end{array} \right. \;\quad  (t,x)\in {\Omega},
  \end{equation*}
we shall derive the $L^\infty -$ estimate of $u_1$, $v_1 \in H_{b}^+ \oplus H_{b}^-$ from Proposition 5.1. We have:
\begin{lem}
  Assume that $(u,v)$ is a solution of {\rm (1.1)}$_{a,b,c}$ obtained in {\rm Theorem 1.1}, then there exists $d_1$, $d_2 >0$, such that $\|u_1\|_{L^\infty} \leq d_1$ and $\|v_1\|_{L^\infty} \leq d_2$.
\end{lem}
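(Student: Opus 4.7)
My plan is to reduce Lemma 5.2 to a direct application of Proposition 5.1, exploiting the fact that $Ly=0$ and $Lz=0$. From the decomposition $u=u_1+y$ with $u_1\in H_b^+\oplus H_b^-=R(L)$ and $y\in\ker L$, the first equation of (1.1)$_{a}$ yields $L u_1 = L u = h_1(t,x) := -bu-\ep v-f(t,x,u)$, and analogously $Lv_1 = h_2(t,x) := -bv-\ep u-g(t,x,v)$. Since $L^2(\Omega)=R(L)\oplus\ker L$ is an orthogonal splitting, $u_1$ coincides with the range-component $w_1$ produced by Proposition 5.1 applied to $Lw=h_1$, so the estimate (5.1) will give $\|u_1\|_{L^\infty}\leq c\|h_1\|_{L^1}$ as soon as I verify $h_1,h_2\in L^1\cap R(L)$.

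The membership $h_i\in R(L)$ is essentially automatic: $h_1 = L(u_1+y) = Lu_1$ already lies in the range, or, tested against any $\phi\in\ker L$, one has $\int h_1\,\phi = \lag Lu,\phi\rag = \lag u,L\phi\rag = 0$ thanks to the Dirichlet and $2\pi$-periodic boundary conditions; the same computation handles $h_2$. For the $L^1$ bound I split linear and nonlinear contributions. The linear pieces $bu$ and $\ep v$ are controlled by the $L^2$ norms of $u$ and $v$ via $L^2(\Omega)\hookrightarrow L^1(\Omega)$ on the bounded cylinder $\Omega$. For the forcing term, (h1) yields $\|f(t,x,u)\|_{L^1}\leq c_0|\Omega|+c_0\|u\|_{L^p}^{p}$, so the crucial ingredient is $u\in L^{p+1}(\Omega)$; this I would extract from the identity $\Phi(u,v)-\tfrac12\lag\Phi'(u,v),(u,v)\rag=\Phi(u,v)$ at a critical point, combined with (h3) and Remark 1.2(ii), which reproduces the a priori bound $\|u\|_{L^{p+1}}\leq C$ already recorded in (4.3). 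The treatment of $h_2$ via (h1), (h3) applied to $g$ is identical, giving $v\in L^{q+1}(\Omega)$ and hence $\|h_2\|_{L^1}<\infty$.

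Once $h_1,h_2\in L^1\cap R(L)$ has been established, applying (5.1) directly finishes the proof with $d_1:=c\|h_1\|_{L^1}$ and $d_2:=c\|h_2\|_{L^1}$. The only subtle point I foresee is the bookkeeping needed to identify the $u_1$ chosen from the variational decomposition $(u,v)=(u_1+y,v_1+z)$ with the range-component $w_1$ in Proposition 5.1; this identification is legitimate because the orthogonal $L^2$-splitting $R(L)\oplus\ker L$ is unique. Beyond that I expect no serious technical obstacle, since the Ambrosetti--Rabinowitz hypothesis (h3) is precisely what upgrades $u$ (respectively $v$) into $L^{p+1}$ (respectively $L^{q+1}$), just enough integrability to dominate the superlinear nonlinearity in $L^1$ and feed it into the sharp $L^1\!\to\! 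L^\infty$ estimate supplied by the representation theorem.
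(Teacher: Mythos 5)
Your proposal is correct and follows essentially the same route as the paper: identify $u_1$, $v_1$ with the range components of Proposition 5.1 and bound them via (5.1) by the $L^1$-norms of $bu+\varepsilon v+f(t,x,u)$ and $bv+\varepsilon u+g(t,x,v)$, which are finite by (h1) once $u\in L^{p+1}(\Omega)$ and $v\in L^{q+1}(\Omega)$. The only (minor) difference is that you extract this integrability from the critical-point identity with (h3), as in (4.1)--(4.3), whereas the paper appeals directly to (2.8); your version is, if anything, the more careful one given Remark 2.2.
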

\begin{proof}
 By (2.8), $p>1$ and (h1) of Theorem 1.1, we have
   $$\int |f(t,x,u)| \leq c_0 \int (1+|u|^p) \leq c_1 (1+\|u\|_E ^p) <\infty,
   $$
which means $f\in L^1 (\Omega\times \mathbb{R})$. Noting that (2.8) also implies $u$, $v\in L^1(\Omega)$, then we can use the estimate (5.1) to obtain a number $d_1 >0$, such that
\begin{eqnarray}
\|u_1\|_{L^\infty} \leq c\|bu + \varepsilon v + f(t,x,u)\|_{L^1}\leq d_1< \infty.
\end{eqnarray}
Similarly, we have
  \begin{eqnarray}
  \|v_1\|_{L^\infty}\leq c\|bv + \varepsilon u + g(t,x,v)\|_{L^1}\leq d_2< \infty,
  \end{eqnarray}
  for some $d_2 >0$.
\end{proof}

\textit{The $L^\infty-$regularity of the components in $H^0 \equiv \ker L$}:\par\smallskip
    For the terms of $y$, $z$ in $H^0$, the proof of $y$, $z\in L^\infty (\Omega)$ is a difficult task since the a-prior estimate
(5.1) is invalid for $y$ and $z$. To this end, we set
    \begin{eqnarray}
    N_1= \|p\|_{L^\infty (\Omega)},\ \  N_2= \|q\|_{L^\infty (\Omega)}.
    \end{eqnarray}
    Without loss of generality, we may suppose that there are $s_1$ and $s_2$, such that $p(s_1)>N_1 -1$ and $q(s_2)>N_2 -1$.  We shall derive the upper bounds of $N_1$
and $N_2$. \par
    Let $\tilde{f}(t,x) = f(t,x,u(t,x))$ and
   \begin{eqnarray}
   h_1 (t,x)= bu(t,x)+\varepsilon v (t,x) +\ \tilde{f}\left(t,x\right),
   \end{eqnarray}
   where $u(t,x)= u_1(t,x)+ p(t+x)- p(t-x)$, $v(t,x)= v_1(t,x)+ q(t+x)- q(t-x)$.\par
   We prepare the following two lemmas to prove Theorem 1.4.
   \begin{lem}
    Assume that $(u(t,x),v(t,x))$ is a solution of {\rm (1.1)}$_{a,b,c}$ obtained in {\rm Theorem 1.1}, then there exists a number $d_3 >0$, such that
     \begin{eqnarray}
       \int_{0}^\pi \tilde{f} (s_1 -x, x) \dif x \leq d_3 - b\pi p(s_1) -2\varepsilon \pi q(s_1).
     \end{eqnarray}
   \end{lem}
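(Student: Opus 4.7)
The starting point is the observation that if $(u,v)$ solves $(1.1)_{a,b,c}$, then the first equation rewrites as $Lu = -h_1$, with $h_1 = bu + \varepsilon v + \tilde{f}$ as in (5.8). Since a $2\pi$-periodic solution $u$ of this wave equation exists, the solvability criterion (5.4) forces
\[
\int_0^\pi h_1(t+x,x)\,\dif x = \int_0^\pi h_1(t-x,x)\,\dif x \qquad \text{for every } t.
\]
The plan is to specialize this identity at $t = s_1$, the point at which $p(s_1) > N_1 - 1$, and to read off (5.9) by bounding everything in sight except the $p(s_1)$, $q(s_1)$ terms.

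Expanding $h_1$ and substituting $u = u_1 + p(t+x) - p(t-x)$ and $v = v_1 + q(t+x) - q(t-x)$ at $t = s_1$, the null-space pieces produce the integrals $\int_0^\pi p(s_1 \pm 2x)\,\dif x$ and $\int_0^\pi q(s_1 \pm 2x)\,\dif x$; a linear change of variables together with $2\pi$-periodicity and the zero-mean property of $p$ and $q$ makes all four of them vanish. What survives from the null-space contributions are the constant values $p(s_1)$ and $q(s_1)$ multiplied by $\pi$, and after a careful accounting of signs from the identities $u(s_1 - x, x) = u_1(s_1 - x, x) + p(s_1) - p(s_1 - 2x)$ and $u(s_1 + x, x) = u_1(s_1 + x, x) + p(s_1 + 2x) - p(s_1)$ (and likewise for $v$), one obtains an identity of the schematic form
\[
\int_0^\pi \tilde{f}(s_1 - x, x)\,\dif x + 2b\pi\, p(s_1) + 2\varepsilon\pi\, q(s_1) = \int_0^\pi \tilde{f}(s_1 + x, x)\,\dif x + \mathcal{R}(u_1,v_1),
\]
where $\mathcal{R}$ is a difference of integrals of $u_1$ and $v_1$ along the characteristic lines.

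It remains to bound the right-hand side from above. The term $\mathcal{R}(u_1, v_1)$ is controlled directly by Lemma 5.2, since $\|u_1\|_{L^\infty} \le d_1$ and $\|v_1\|_{L^\infty} \le d_2$ yield $|\mathcal{R}| \le 2\pi b d_1 + 2\pi |\varepsilon| d_2$. The delicate point is to control $\int_0^\pi \tilde{f}(s_1 + x, x)\,\dif x$ from above, and this is where the choice of $s_1$ together with monotonicity (h4) enters: because $p(s_1) > N_1 - 1$ and $\|p\|_{L^\infty} = N_1$, we have the one-sided pointwise bound $p(s_1 + 2x) - p(s_1) < 1$, so $u(s_1 + x, x) < u_1(s_1 + x, x) + 1 \le d_1 + 1$. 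Since $f(t,x,\xi)$ is nondecreasing in $\xi$, this gives $\tilde{f}(s_1 + x, x) \le f(s_1 + x, x, d_1 + 1)$, which is uniformly bounded by continuity of $f$ on the compact set $\Omega \times [-(d_1+1),\, d_1+1]$. Absorbing all the $\varepsilon$-independent constants and the $\mathcal{R}$ estimate into a single constant $d_3$ yields (5.9).

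The conceptual obstacle—and the reason (5.4) is indispensable—is the asymmetry in controlling the null component $p$: the choice of $s_1$ near the supremum of $p$ gives an upper bound on $u(s_1 + x, x)$ but only a lower bound on $u(s_1 - x, x)$, so one cannot directly estimate $\int \tilde{f}(s_1 - x, x)\,\dif x$. The identity coming from $h_1 \in R(L)$ trades the hard integral (over $s_1 - x$) for the easy one (over $s_1 + x$), at the explicit cost of the linear-in-$p(s_1)$ and linear-in-$q(s_1)$ correction terms. The genuinely new feature relative to the scalar setting of \cite{Bre-Nir, Ra1} is the appearance of the $q(s_1)$ term coming from the linear coupling $\varepsilon v$; this term must be carried along to the symmetric estimate for $q$ in Lemma 5.4, where smallness of $\varepsilon$ will be needed to close the loop.
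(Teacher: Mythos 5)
Your argument is correct and follows essentially the same route as the paper: apply the range condition (5.4) to $h_1$ at $t=s_1$, use the zero-mean property of $p,q$ to extract the $p(s_1),q(s_1)$ terms, bound $\tilde f(s_1+x,x)$ via the near-maximality $p(s_1+2x)-p(s_1)\le 1$ together with (h4) and Lemma 5.2, and absorb the remaining characteristic-line integrals of $u_1,v_1$ into a constant. The only cosmetic difference is that your exact bookkeeping yields the coefficient $-2b\pi p(s_1)$ rather than the paper's $-b\pi p(s_1)$; since $p(s_1)>N_1-1\ge -1$, the surplus $-b\pi p(s_1)$ is bounded by $b\pi$ and folds into $d_3$, so (5.9) follows.
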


 \begin{proof}
  Noting that $Lu + h_1(t,x)=0$, then putting $t=s_1$ in (5.4), we have
  \begin{eqnarray}
  \int_{0}^\pi h_1(s_1 +x,x)\dif x =  \int_{0}^\pi h_1(s_1 -x,x)\dif x .
    \end{eqnarray}

   Since $p(s_1 +2x)-p(s_1)\leq 1$ and $u_1(s_1+x,x)\leq d_1$, it follows from (h4), (5.5) and  $f\in C (\Omega \times \mathbb{R},~\mathbb{R})$ that
   $$\tilde{f}\left(s_1+x,x\right)\leq f\left(s_1+x,x,d_1 +1\right) \leq d_4,
   $$
   for some $d_4>0$. Therefore, by virtue of $\int_0 ^\pi q(s_1 +2x) \dif x =0$, we deduce from (5.5), (5.6) and the above inequality that
  \begin{eqnarray}
    \int_{0}^\pi h_1(s_1 +x,x)\dif x  &\leq&  \int_{0}^\pi \big[bd_1 + |\varepsilon| d_2 +\varepsilon q(s_1+2x) -\varepsilon q(s_1) +d_4\big] \dif x \nonumber\\[0.4em]
    &\leq& d_5 - \varepsilon \pi q(s_1).
  \end{eqnarray}
\indent On the other hand, it follows from (5.5), (5.6) that
$$u_1(s_1 -x,x)\geq -d_1\ \  {\rm and}\ \ v_1(s_1 -x,x)\geq -d_2.
 $$
 Furthermore, by the fact of $\int_0 ^\pi p(s_1 -2x) \dif x= \int_0 ^\pi q(s_1 -2x) \dif x =0$, we have
   \begin{eqnarray}
     \int_{0}^\pi h_1(s_1 -x,x)\dif x
    \geq b\pi p(s_1)+\varepsilon \pi q(s_1)+\int_0 ^\pi \tilde{f}\left(s_1-x,x\right) \dif x -b\pi d_1.
   \end{eqnarray}
\label{}\ \ \ \ Hence, combining with (5.10)-(5.12), we arrive at (5.9).
\end{proof}

To proceed further, we denote by
 $$\Lambda_1 = \left\{x\in [0,\pi]: p(s_1 -2x)\leq \frac{N_1}{2}\right\}\ \ {\rm and}\ \ \Lambda_2 = \left\{x\in [0,\pi]: q(s_2 -2x)\leq \frac{N_2}{2}\right\}.
 $$
 We have
 \begin{lem}
     Assume that $(u(t,x),v(t,x))$ is a solution of {\rm (1.1)}$_{a,b,c}$ obtained in {\rm Theorem 1.1}, then {\rm meas}$\Lambda_1 \geq \pi/3$, {\rm meas}$\Lambda_2 \geq \pi/3$ and there exists a number $d>0$, such that
      \begin{eqnarray}
    \int_{\Lambda_1} f\left(s_1-x,x, \frac{N_1}{2}-d_1 -1\right) \dif x + \int_{\Lambda_2} g\left(s_2-x,x, \frac{N_2}{2}-d_2 -1\right) \dif x \leq d.
  \end{eqnarray}
 \end{lem}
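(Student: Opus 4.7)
The plan is to split the lemma into two independent pieces: first the measure estimates $\mathrm{meas}\,\Lambda_i \geq \pi/3$, which follow from the zero-mean property of $p$ and $q$, and then the integral bound (5.13), which I would obtain by coupling Lemma 5.3 (and its obvious analogue at $s_2$ for the $v$-equation) with a pointwise lower bound on $\tilde f(s_1-x,x)$ and $\tilde g(s_2-x,x)$ that is sharpened on $\Lambda_i$ via the monotonicity hypothesis (h4).

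For $\mathrm{meas}\,\Lambda_1 \geq \pi/3$, the substitution $y = s_1 - 2x$ together with $2\pi$-periodicity and $\int_0^{2\pi} p(\tau)\,d\tau = 0$ give $\int_0^\pi p(s_1 - 2x)\,dx = 0$. Splitting this integral across $\Lambda_1$ and its complement, and using $p \geq -N_1$ on $\Lambda_1$ and $p > N_1/2$ on the complement, one obtains $N_1 \,\mathrm{meas}\,\Lambda_1 \geq \tfrac{N_1}{2}(\pi - \mathrm{meas}\,\Lambda_1)$, so $\mathrm{meas}\,\Lambda_1 \geq \pi/3$. The case of $\Lambda_2$ is identical.

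For the integral bound, I would proceed as follows. Using $u(s_1 - x, x) = u_1(s_1 - x, x) + p(s_1) - p(s_1 - 2x)$, $\|u_1\|_{L^\infty} \leq d_1$, and $p(s_1) > N_1 - 1$: for $x \in \Lambda_1$ one has $u(s_1 - x, x) > N_1/2 - d_1 - 1$, so by (h4) $\tilde f(s_1 - x, x) \geq f\bigl(s_1 - x, x, N_1/2 - d_1 - 1\bigr)$ on $\Lambda_1$; for $x \notin \Lambda_1$ the weaker bound $u(s_1 - x, x) > -d_1 - 1$ together with continuity of $f$ on the compact set $\Omega \times \{-d_1-1\}$ gives a uniform lower bound $\tilde f(s_1 - x, x) \geq -C_1$. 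Inserting into Lemma 5.3 and using $p(s_1) > N_1 - 1$ and $|q(s_1)| \leq N_2$, I get
\[
\int_{\Lambda_1} f\!\left(s_1 - x, x, \tfrac{N_1}{2} - d_1 - 1\right) dx \leq K_1 - b\pi N_1 + 2|\varepsilon|\pi N_2.
\]
The symmetric argument at $s_2$ applied to the $v$-equation yields
\[
\int_{\Lambda_2} g\!\left(s_2 - x, x, \tfrac{N_2}{2} - d_2 - 1\right) dx \leq K_2 - b\pi N_2 + 2|\varepsilon|\pi N_1.
\]

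Adding these two inequalities produces a bound of the form $\tilde K - (b - 2|\varepsilon|)\pi(N_1 + N_2)$. The main obstacle is exactly this cross-coupling through $\varepsilon$: the bound on the $f$-integral involves $q(s_1)$, while the bound on the $g$-integral involves $p(s_2)$, and in principle these can grow linearly in $N_2$ and $N_1$ respectively. The saving is that both hostile terms have coefficient $2|\varepsilon|\pi$, whereas the favorable terms $-b\pi p(s_1)$ and $-b\pi q(s_2)$ each contribute the larger factor $b\pi$; so as long as $|\varepsilon| < b/2$, the combined coefficient of $N_1 + N_2$ is nonpositive, and since $N_1, N_2 \geq 0$ this term can simply be dropped, giving (5.13) with $d = \tilde K$.
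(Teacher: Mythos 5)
Your proposal is correct and follows essentially the same route as the paper: the same zero-mean splitting of $\int_0^\pi p(s_1-2x)\,dx$ for the measure bounds, the same decomposition of $\int_0^\pi \tilde f(s_1-x,x)\,dx$ over $\Lambda_1$ and $\Lambda_1^c$ using (h4) and the $L^\infty$ bounds on $u_1,v_1$, and the same insertion into Lemma 5.3 and its $s_2$-analogue followed by the $|\varepsilon|<b/2$ cancellation of the coupling terms. The only cosmetic difference is that you absorb $p(s_1),q(s_2),p(s_2),q(s_1)$ into $N_1,N_2$ before cancelling, whereas the paper cancels the paired terms $b\pi p(s_1)+2\varepsilon\pi p(s_2)$ and $b\pi q(s_2)+2\varepsilon\pi q(s_1)$ directly.
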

 \begin{proof}
 (1) At first, we prove that {\rm meas}$\Lambda_1 \geq \pi/3$.
 In view of
     \begin{equation*}
  p(s_1-2x)\geq\left\{\begin{array}{l}
     -N_1,\ \ {\rm for}\ x\in \Lambda_1,
    \\[1.2ex]
     N_1/2,\ \ {\rm for}\ x\in \Lambda_1^c,
 \end{array} \right.
  \end{equation*}
  we get
    \begin{eqnarray*}
      0 &=& \int_0 ^\pi p(s_1 -2x)\dif x = \int_{\Lambda_1}p(s_1 -2x)\dif x + \int_{\Lambda_1 ^c}p(s_1 -2x)\dif x \\
       &\geq&  -N_1 {\rm meas}\Lambda_1 + \frac{N_1}{2}\left(\pi- {\rm meas}\Lambda_1\right),
    \end{eqnarray*}
 which indicates ${\rm meas}\Lambda_1 \geq \pi/3$. \par
 Similarly, we also have ${\rm meas}\Lambda_2 \geq \pi/3$.\par
(2) Now we turn to the proof of (5.13). To this end, we represent the left hand side of (5.9) in the form of
 $$\left(\int_{\Lambda_1}+\int_{\Lambda_1 ^c}\right) \tilde{f}\left(s_1-x,x\right) \dif x := \widetilde{Q}_1 + \widetilde{Q}_2 .
 $$
\label{}\ \ \ \ It follows from (5.7) that $p(s_1)-p(s_1 -2x)\geq N_1/2 -1$, for $x\in \Lambda_1$. Then by (h4) and (5.5), we know
  $$\widetilde{Q}_1 \geq \int_{\Lambda_1} f\left(s_1-x,x, \frac{N_1}{2}-d_1 -1\right) \dif x.
  $$
  On the other hand, recording $u_1(s_1-x,x)+p(s_1)-p(s_1 -2x) \geq -d_1 -1$, then we deduce from (h1) and (h4) that
   $$\tilde{f}\left(s_1-x,x\right) \geq f\left(s_1-x,x,-d_1-1\right)\geq -c_0-c_0 d_1^p,
   $$
   which gives
  $$\widetilde{Q}_2 \geq -\int_{\Lambda_1 ^c} \left(c_0 +c_0 d_1 ^p\right) \dif x \geq -d_6.
  $$
  Thus, coming back to (5.9), we get
  \begin{eqnarray}
    \int_{\Lambda_1} f\left(s_1-x,x, \frac{N_1}{2}-d_1 -1\right) \dif x
     &\leq& \int_0^\pi \tilde{f}\left(s_1-x,x\right) \dif x - \widetilde{Q}_2 \nonumber\\[0.5em]
    &\leq& d_7 - b\pi p(s_1)- 2\varepsilon \pi q(s_1).\quad
  \end{eqnarray}
 \label{}\ \ \ \ Applying a similar procedure, we also have
    \begin{eqnarray}
    \int_{\Lambda_2} g\left(s_2-x,x, \frac{N_2}{2}-d_2 -1\right) \dif x \leq d_7 ' - b\pi q(s_2)- 2\varepsilon \pi p(s_2).
  \end{eqnarray}

 Choosing $|\varepsilon| <b/2$, we infer from (5.7) that $|p(s_2)|\leq p(s_1) +1$, $|q(s_1)|\leq q(s_2) +1$, which assure $ b\pi p(s_1) + 2\varepsilon \pi p(s_2) \geq -2\varepsilon \pi$ and $ b\pi q(s_2) + 2\varepsilon \pi q(s_1) \geq -2\varepsilon \pi$. Then, adding up (5.14) and (5.15), we obtain (5.13).
  \end{proof}

 Now we are ready to prove Theorem 1.4.\\[0.4em]
 {\bf Proof of Theorem 1.4.}
 Without loss of generality, we may assume that $N_1 >2d_1 +2$ and $N_2 >2d_2 +2$. Thus, we follow from (h2) and (h4) to derive
 $$\int_{\Lambda_1} f\left(s_1-x,x, \frac{N_1}{2}-d_1 -1\right) \dif x \geq 0,\ \ \int_{\Lambda_2} g\left(s_2-x,x, \frac{N_2}{2}-d_2 -1\right) \dif x \geq 0.
 $$

    Now, we claim that $N_1 <+\infty$. If not, from the fact of $f(s_1-x,x,\xi)\rightarrow +\infty$ as $\xi\rightarrow +\infty$, for $x\in \Lambda_1$, we
 deduce that for any $\gamma >0$, there are $\beta>0$ and a subset $D$ of $\Lambda_1$ with meas$D <\pi/6$, such that
    $$f\left(s_1-x,x, \beta\right) \geq \gamma,\ \ \ {\rm for}\ x\in D^c.
    $$
Hence, for $N_1 >2(\beta+d_1)$, we infer from the above inequality and (h4) that
 \begin{eqnarray}
    \int_{\Lambda_1 \cap D^c} f\left(s_1-x,x, \frac{N_1}{2}-d_1 -1\right) \dif x \geq \gamma\; {\rm meas}(\Lambda_1 \cap D^c) \geq \frac{1}{6}\gamma\pi.
 \end{eqnarray}

  On the other hand, by (h2), (h4) and $N_1 >2d_1 +2$, we have
  \begin{eqnarray}
    \int_{\Lambda_1 \cap D} f\left(s_1-x,x, \frac{N_1}{2}-d_1 -1\right) \dif x \geq 0.
 \end{eqnarray}
 Then, with the aid of (5.13), (5.16) and (5.17), we show that
 \begin{eqnarray*}
   \frac{1}{6}\gamma\pi \leq \int_{\Lambda_1} f\left(s_1-x,x, \frac{N_1}{2}-d_1 -1\right) \dif x \leq d,
  \end{eqnarray*}
  which is impossible when we fix $\gamma >6d/\pi$.\par
   Therefore, we conclude that $y$ is essentially bounded. With a similar argument, we have $z\in L^\infty$. The proof of Theorem 1.4 is thereby completed.\hfill $\Box$

\section{Proof of Theorem 1.5}
\setcounter{equation}{0}
 \label{}\ \ \ \  This section is devoted to the proof of the continuity of the solutions for (1.1)$_{a,b,c}$.\par\medskip

 Let $(u, v)\in L^\infty(\Omega)\times L^\infty(\Omega)$ be a solution of (1.1)$_{a,b,c}$ provided by Theorem 1.4. Splitting it into  $(u,v)=(u_1 +y, v_1 +z)$, with $u_1$, $v_1\in H_{b}^+ \oplus H_{b}^- \equiv (\ker L)^\bot$, and $y$, $z\in H^0\equiv \ker L$.
 \subsection{Continuity of the regular terms}
 \label{}\ \ \ \   It follows from (h1) and the fact of $u, v\in L^\infty$ that $bu+\varepsilon v+f(t,x,u) \in L^\infty$ and $bv+\varepsilon
 u+g(t,x,v) \in L^\infty$. Hence, noting that $(u,v)=(u_1 +y,\; v_1 +z)$ solves the system of (1.1)$_{a,b,c}$, we infer from (5.2) that $u_1, v_1 \in C^{0,1}(\Omega)$, which means
  \begin{eqnarray}
   |u_1(t,x)-u_1(\tau,\zeta)| + |v_1(t,x)-v_1(\tau,\zeta)|\leq C\left(|t-\tau|+|x-\zeta|\right),
  \end{eqnarray}
 for all $(t,x)$, $(\tau,\zeta)\in \Omega$. Consequently, we achieve that $u_1(t,x)$ and $v_1(t,x)$ are continuous on $\Omega$.
\subsection{Continuity of the null terms}
 \label{}\ \ \ \     We turn to prove that $y$, $z\in C(\Omega)$. Regarding that $y,\ z\in \ker L$ can be represented in the form of
 $y= p(t+x)- p(t-x)$ and $z= q(t+x)- q(t-x)$, where $p,\ q\in L^1 ([0, 2\pi])$ are $2\pi$-periodic and satisfy $\int_0^{2\pi} p(\tau) \dif \tau =\int_0^{2\pi} q(\tau) \dif \tau =0$, then it suffices to show that $p$ and $q$ are continuous, that means
   $$p(t+h)-p(t)\rightarrow 0,\ \ q(t+h)-q(t)\rightarrow 0,\ \ {\rm as}\ h\rightarrow 0,\ {\rm for\ a.e.}\ t \in [0.2\pi].
   $$

   To reach our goal, we denote by $\hat {p}_{h}(t)=p(t +h)-p(t)$, $\hat {q}_{h}(t)=q(t +h)-q(t)$, for fixed $|h|<1/4$, and let
     $$M_1 = \|\hat {p}_{h}\|_{L^\infty (\Omega)},\ \ M_2 = \|\hat {q}_{h}\|_{L^\infty (\Omega)}.
     $$
Then $M_1\leq 2N_1<\infty$ and $M_2\leq 2N_2<\infty$, where $N_1=\|p\|_{L^\infty (\Omega)}$, and $N_2=\|q\|_{L^\infty (\Omega)}$. Moreover, without loss of generality, we may assume that there exist  $s_1$ and $s_2$, such that
     \begin{eqnarray}
       \hat {p}_{h}(s_1)> M_1(1-|h|),\ \ \hat {q}_{h}(s_2)> M_2(1-|h|).
     \end{eqnarray}
      We intend to prove $M_1\rightarrow0$, $M_2\rightarrow0$, as $|h|\rightarrow 0$.\par
      For simplicity, we denote by $\hat{p}(t)=\hat{p}_{h}(t)$ and $\hat{q}(t)=\hat{q}_{h}(t)$.
       The following notations and estimates are in order. Define
\begin{eqnarray*}
     \phi(\tau)=\min_{\substack{(t,x)\in \Omega\\|s|\leq 2N_1}} \big[f(t,x,u_1(t,x)+s+\tau)-f(t,x,u_1(t,x)+s)\big],
   \end{eqnarray*}
  \begin{eqnarray*}
     \psi(\tau)=\min_{\substack{(t,x)\in \Omega\\|s|\leq 2N_2}} \big[g(t,x,v_1(t,x)+s+\tau)-g(t,x,v_1(t,x)+s)\big],
   \end{eqnarray*}
We shall prove that
   \begin{eqnarray}
    \int_0^\pi \big[\phi(\hat{p}(s_1)-\hat{p}(s_1-2x))+\psi(\hat{q}(s_2)-\hat{q}(s_2-2x))\big]\dif x\leq C|h|,
  \end{eqnarray}
and
\begin{eqnarray}
    \int_{\Sigma_1} \phi(\hat{p}(s_1)-\hat{p}(s_1-2x))\dif x +  \int_{\Sigma_2}\psi(\hat{q}(s_2)-\hat{q}(s_2-2x))\dif x \leq C|h|,
  \end{eqnarray}
where $C>0$ is a number independent of $h$, and
 $$\Sigma_1 = \left\{x\in [0,\pi]:\hat{p}(s_1)-\hat{p}(s_1 -2x)\geq \frac{M_1}{2}\right\},\
  \Sigma_2 = \left\{x\in [0,\pi]:\hat{q}(s_2)-\hat{q}(s_2 -2x)\geq \frac{M_2}{2}\right\}.
 $$
 \subsubsection{Proof of (6.3)}
 \label{}\ \ \ \ We carry out the argument by three steps.\par
 {\textbf{S{\scriptsize{TEP}} 1:} We establish the following lemma to study the behavior of the integral
 \begin{eqnarray*}
 J &=& \int_0^\pi \big[ b\hat{p}(s_1)-b\hat{p}(s_1-2x))+ \varepsilon \hat{q}(s_1)- \varepsilon \hat{q}(s_1-2x))- \tilde{f}(s_1 -x, x)\\
 &&\qquad +\ f\big(s_1-x,x,u_1(s_1-x,x)+p(s_1+h)-p(s_1+h-2x)\big) \big]\dif x.
\end{eqnarray*}
 \begin{lem}
     Assume that $(u(t,x),v(t,x))$ is a solution of {\rm (1.1)}$_{a,b,c}$ obtained in {\rm Theorem 1.1}, then there exists a number $C>0$ independent of $h$, such that
      \begin{eqnarray}
    J \leq  C|h|-\varepsilon \pi \hat{q}(s_1).
  \end{eqnarray}
 \end{lem}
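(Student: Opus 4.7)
The plan is to exploit the range condition (5.4) for $L$: since $(u,v)$ solves (1.1)$_{a,b,c}$ one has $Lu+h_1\equiv 0$, so $h_1\in R(L)$, and
$$\int_0^\pi h_1(s+x, x)\,\dif x=\int_0^\pi h_1(s-x, x)\,\dif x$$
for every $s\in\mathbb{R}$. Applying this identity at $s=s_1$ and at $s=s_1+h$ and subtracting yields
$$\int_0^\pi\!\big[h_1(s_1+h-x,x)-h_1(s_1-x,x)\big]\,\dif x=\int_0^\pi\!\big[h_1(s_1+h+x,x)-h_1(s_1+x,x)\big]\,\dif x.$$

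The first step is to recognize that the left side of this identity coincides with $J$ up to an $O(|h|)$ error. Using the decompositions $u=u_1+p(t+x)-p(t-x)$ and $v=v_1+q(t+x)-q(t-x)$, the pointwise difference $h_1(s_1+h-x,x)-h_1(s_1-x,x)$ reproduces exactly the $b\hat{p}$-, $\varepsilon\hat{q}$- and $f$-increment terms appearing in the integrand of $J$, except that $u_1$, $v_1$ and the $t$-slot of $f$ are evaluated at $s_1+h-x$ rather than at $s_1-x$. The Lipschitz bound (6.1) on $u_1, v_1$, the $C^1$ regularity of $f$ imposed by Theorem 1.5, and the $L^\infty$ bound on $u$ from Theorem 1.4, then show that each of these discrepancies is pointwise at most $C|h|$.

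The second step is to bound the right side from above. The identity
$$u(s_1+h+x,x)-u(s_1+x,x)=\big[u_1(s_1+h+x,x)-u_1(s_1+x,x)\big]+\hat{p}(s_1+2x)-\hat{p}(s_1),$$
the Lipschitz estimate for $u_1$, and the mean-zero periodic fact $\int_0^\pi \hat{p}(s_1+2x)\,\dif x=0$ show that the $b$-contribution to the right side integrates to $O(|h|)-b\pi\hat{p}(s_1)$, and analogously the $\varepsilon v$-contribution integrates to $O(|h|)-\varepsilon\pi\hat{q}(s_1)$. For the $\tilde{f}$-contribution, the near-extremality (6.2) of $s_1$ gives the pointwise bound $\hat{p}(s_1+2x)-\hat{p}(s_1)\leq M_1|h|$, and since $M_1\leq 2N_1$ is a priori bounded, $u(s_1+h+x,x)-u(s_1+x,x)\leq C|h|$; combining this with (h4), $f\in C^1$, and $u\in L^\infty$, a two-step mean-value argument yields $\tilde{f}(s_1+h+x,x)-\tilde{f}(s_1+x,x)\leq C|h|$ pointwise.

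Assembling these ingredients produces the stronger estimate
$$J\leq C|h|-b\pi\hat{p}(s_1)-\varepsilon\pi\hat{q}(s_1),$$
and since $\hat{p}(s_1)\geq M_1(1-|h|)\geq 0$ by (6.2), the nonpositive term $-b\pi\hat{p}(s_1)$ may be dropped to conclude (6.5). The principal obstacle is the pointwise control of $\tilde{f}(s_1+h+x,x)-\tilde{f}(s_1+x,x)$: a naive Lipschitz estimate in the $\xi$-slot fails because the $u$-increment is not automatically small, and closing the bound requires the combined use of the near-extremality of $s_1$, the monotonicity assumption (h4), and the full $C^1$ regularity of $f$ demanded in Theorem 1.5.
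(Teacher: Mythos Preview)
Your proof is correct and follows essentially the same strategy as the paper's. The paper also uses the range condition (5.4) at $t=s_1$ and $t=s_1+h$, but organizes the bookkeeping via two auxiliary integrals $J_1,J_2$ and a three-term splitting $J=\tilde J_1+\tilde J_2+\tilde J_3$, where $\tilde J_1,\tilde J_2$ absorb exactly the $O(|h|)$ discrepancies you identify (coming from the $C^{0,1}$ bound (6.1) on $u_1,v_1$ and the $C^1$ regularity of $f$), and $\tilde J_3$ carries the $-\varepsilon\pi\hat q(s_1)$ term via (6.9)--(6.11). Your argument simply merges these pieces, and along the way you obtain the sharper intermediate bound $J\le C|h|-b\pi\hat p(s_1)-\varepsilon\pi\hat q(s_1)$ (the paper instead absorbs $-b\pi\hat p(s_1)$ into $C|h|$ via the crude pointwise estimate $\hat p(s_1+2x)-\hat p(s_1)\le M_1|h|$); dropping the nonpositive term then gives (6.5).
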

 \begin{proof}
 We denote by
 \begin{eqnarray*}
      J_1&=& \int_0^\pi \big[bu_1(s_1-x,x)+bp(s_1+h)-bp(s_1+h-2x)+\varepsilon v_1(s_1-x,x)+\varepsilon q(s_1+h)\\
      &&\  -\ \varepsilon q(s_1+h-2x)+f\left(s_1-x,x,u_1(s_1-x,x)+p(s_1+h)-p(s_1+h-2x)\right)\big]\dif x,
    \end{eqnarray*}
and
 \begin{eqnarray*}
      J_2&=& \int_0^\pi \big[bu_1(s_1+x,x)+bp(s_1+h+2x)-bp(s_1+h)+\varepsilon v_1(s_1+x,x)+\varepsilon q(s_1+h+2x)\\
      &&\  -\ \varepsilon q(s_1+h)+f\left(s_1+x,x,u_1(s_1+x,x)+p(s_1+h+2x)-p(s_1+h)\right)\big]\dif x.
    \end{eqnarray*}

Since $(u,v)$ is a solution of $Lu + h_1(t,x)=0$, and recalling the notation of the function $h_1(t,x)$ occurs in (5.8), then we note that
    \begin{eqnarray*}
     \int_0^\pi h_1(s_1+h-x,x)\dif x &=& \int_0^\pi h_1(s_1+h+x,x)\dif x\ \ {\rm and}\\
     \int_0^\pi h_1(s_1-x,x)\dif x &=& \int_0^\pi h_1(s_1+x,x)\dif x,
    \end{eqnarray*}
by taking into account (5.4) with $t=s_1+h$ and $t=s_1$ respectively. Then we have
   \begin{eqnarray}
     J &=&\left( J_1 - \int_0^\pi h_1(s_1+h-x,x)\dif x \right) + \left( \int_0^\pi h_1(s_1+h+x,x)\dif x - J_2 \right)\nonumber\\
      &&\ +\ \left( J_2 - \int_0^\pi h_1(s_1+x,x)\dif x \right)\\[0.5em]
      &:=& \tilde{J_1} + \tilde{J_2}+ \tilde{J_3}.\nonumber
   \end{eqnarray}

 \textit{Behavior of $\tilde{J_1}$ and} $\tilde{J_2}$:\par
  We begin with the study of the asymptotic behavior of the first term in the right hand side of (6.6). It is plain to check that
  \begin{eqnarray*}
     \tilde{J_1}&=& \int_0^\pi \big[bu_1(s_1-x,x)-bu_1(s_1+h-x,x)+\varepsilon v_1(s_1-x,x)-\varepsilon v_1(s_1+h-x,x)\\
      &&\  -\ f\left(s_1+h-x,x,u_1(s_1+h-x,x)+p(s_1+h)-p(s_1+h-2x)\right)\\
      &&\  +\ f\left(s_1-x,x,u_1(s_1-x,x)+p(s_1+h)-p(s_1+h-2x)\right)\big]\dif x.
    \end{eqnarray*}

   By virtue of (6.1) and $f\in C^1(\Omega\times {\mathbb R})$, we get
  \begin{eqnarray}
  |\tilde{J_1}|\leq \int_0^\pi \big[C|h|+C|u_1(s_1-x,x)-u_1(s_1+h-x,x)|\big]\dif x\leq C|h|,
   \end{eqnarray}
where $C>0$ is independent of $h$. Moreover, a similar calculation shows that
\begin{eqnarray}
      |\tilde{J_2}|&\leq& \int_0^\pi \Huge|bu_1(s_1+h+x,x)-bu_1(s_1+x,x)+\varepsilon v_1(s_1+h+x,x)-\varepsilon v_1(s_1+x,x)\nonumber\\
      &&\  +\ f\left(s_1+h+x,x,u_1(s_1+h+x,x)+p(s_1+h+2x)-p(s_1+h)\right)\nonumber\\
      &&\  -\ f\left(s_1+x,x,u_1(s_1+x,x)+p(s_1+h+2x)-p(s_1+h)\right)\Huge|\dif x\nonumber\\
      &\leq& C|h|.
    \end{eqnarray}

 \textit{Behavior of} $\tilde{J_3}$:\par
  Using the notations of $\hat {p}(t)=p(t +h)-p(t)$ and $\hat {q}(t)=q(t +h)-q(t)$, we can represent
  \begin{eqnarray*}
   \tilde{J_3} &=& \int_0^\pi \big[b\hat{p}(s_1+2x)-b\hat{p}(s_1)+\varepsilon \hat{q}(s_1+2x)-\varepsilon \hat{q}(s_1)\\
      &&\  +\ f\left(s_1+x,x,u_1(s_1+x,x)+p(s_1+h+2x)-p(s_1+h)\right)\\
      &&\  -\ f\left(s_1+x,x,u_1(s_1+x,x)+p(s_1+2x)-p(s_1)\right)\big]\dif x.
  \end{eqnarray*}

   By (6.2) and the definition of $M_1$, we obtain
   $$\hat{p}(s_1+2x) \leq M_1 < \hat{p}(s_1)+M_1|h|,
   $$
 which indicates that
   \begin{eqnarray}
     p(s_1+h+2x)-p(s_1+h)<p(s_1+2x)-p(s_1)+M_1|h|.
   \end{eqnarray}

   In addition, from the facts of $q$ is $2\pi$-periodic and $\int_0^{2\pi} q(\tau) \dif \tau =0$, we deduce that
  $$\int_0^{\pi} \hat{q}(s_1+2x) \dif x = \int_0^{\pi}\big[ q(s_1+h+2x)-q(s_1+2x)\big] \dif x =0.
  $$
Thus, the above estimates give
\begin{eqnarray}
  \int_0^\pi \big[b\hat{p}(s_1+2x)-b\hat{p}(s_1)+\varepsilon \hat{q}(s_1+2x)-\varepsilon \hat{q}(s_1)\big]\dif x \leq
  \pi bM_1|h|-\varepsilon \pi \hat{q}(s_1).
\end{eqnarray}
  Moreover, we infer from (6.9), (h4) and  $f\in C^1(\Omega\times {\mathbb R})$ that
  \begin{eqnarray}
   &&\int_0^\pi \big[\ f\left(s_1+x,x,u_1(s_1+x,x)+p(s_1+h+2x)-p(s_1+h)\right) \nonumber\\
      &&\ \ \ \ \ -\ f\left(s_1+x,x,u_1(s_1+x,x)+p(s_1+2x)-p(s_1)\right)\big]\dif x \nonumber\\
   &\leq& CM_1 \pi |h|.
  \end{eqnarray}
Therefore, making use of (6.10) and (6.11), we have
   \begin{eqnarray}
  \tilde{J_3} \leq  \pi bM_1|h|-\varepsilon \pi \hat{q}(s_1)+CM_1\pi |h|.
\end{eqnarray}

   Now, inserting the estimates (6.7), (6.8) and (6.12) into (6.6), we obtain (6.5).
\end{proof}
     \par\medskip
 {\textbf{S{\scriptsize{TEP}} 2:}} We require the next lemma concerning with the properties of the integrals
  \begin{eqnarray*}
    R_1&=&\int_0^\pi \big[f\left(s_1-x,x,u_1(s_1-x,x)+p(s_1+h)-p(s_1+h-2x)\right)\\
        &&\ \ -\ f\left(s_1-x,x,u_1(s_1-x,x)+p(s_1)-p(s_1-2x)\right)\big]\dif x,
  \end{eqnarray*}
  \begin{eqnarray*}
    R_2&=&\int_0^\pi \big[g\left(s_2-x,x,v_1(s_2-x,x)+q(s_2+h)-q(s_2+h-2x)\right)\\
        &&\ \ -\ g\left(s_2-x,x,v_1(s_2-x,x)+q(s_2)-q(s_2-2x)\right)\big]\dif x.
  \end{eqnarray*}
  \begin{lem}
  Under the assumptions of {\rm Theorem 1.5}, we have $R_1+R_2\leq C|h|$ for $\varepsilon$ is sufficiently small, where $C>0$ is independent of $h$.
  \end{lem}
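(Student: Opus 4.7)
The plan is to read off $R_1$ directly from the integral $J$ in Lemma 6.1 and then combine the resulting inequality with its analogue for the $g$-equation, exploiting the strict positivity of $\hat p(s_1)$ and $\hat q(s_2)$ guaranteed by (6.2) to swallow the coupling.

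First I would isolate the ``simple'' part of the integrand of $J$. Since $p$ and $q$ are $2\pi$-periodic with zero mean, we have
\begin{eqnarray*}
\int_0^\pi \hat p(s_1-2x)\,\dif x = \int_0^\pi \hat q(s_1-2x)\,\dif x = 0,
\end{eqnarray*}
so that $\int_0^\pi\bigl[b\hat p(s_1)-b\hat p(s_1-2x)+\varepsilon\hat q(s_1)-\varepsilon\hat q(s_1-2x)\bigr]\dif x = \pi b\hat p(s_1)+\pi\varepsilon\hat q(s_1)$. Recalling that $\tilde f(s_1-x,x)=f(s_1-x,x,u_1(s_1-x,x)+p(s_1)-p(s_1-2x))$, the remaining part of the integrand in $J$ is exactly the integrand of $R_1$. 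Therefore $J = \pi b\hat p(s_1)+\pi\varepsilon\hat q(s_1)+R_1$, and the bound (6.5) from Lemma 6.1 rearranges to
\begin{eqnarray}
R_1 \leq C|h| - \pi b\,\hat p(s_1) - 2\pi\varepsilon\,\hat q(s_1). \label{R1bound}
\end{eqnarray}

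Next I would repeat the argument leading to Lemma 6.1 verbatim, but with the roles of $(u,f,s_1,p,q)$ replaced by $(v,g,s_2,q,p)$; the structure of the system is symmetric under this exchange, the $L^\infty$-bounds on $u_1,v_1$ and the $C^1$-hypothesis on $g$ yield exactly the analogous estimate
\begin{eqnarray}
R_2 \leq C|h| - \pi b\,\hat q(s_2) - 2\pi\varepsilon\,\hat p(s_2). \label{R2bound}
\end{eqnarray}
(The coupling term in the wave equation for $v$ is $\varepsilon u$, which is why the cross variable $\hat p(s_2)$ appears in place of $\hat q(s_2)$ in the $\varepsilon$-correction.)

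Adding \eqref{R1bound} and \eqref{R2bound} gives
\begin{eqnarray*}
R_1+R_2 \leq 2C|h| - \pi b\bigl[\hat p(s_1)+\hat q(s_2)\bigr] - 2\pi\varepsilon\bigl[\hat q(s_1)+\hat p(s_2)\bigr].
\end{eqnarray*}
By (6.2), $\hat p(s_1)\geq M_1(1-|h|)\geq 0$ and $\hat q(s_2)\geq M_2(1-|h|)\geq 0$ for $|h|\leq 1/4$, so the $b$-term is nonpositive and bounded above by $-\tfrac{\pi b}{2}(M_1+M_2)$. On the other hand, by the very definitions of $M_1,M_2$ we have $|\hat p(s_2)|\leq M_1$ and $|\hat q(s_1)|\leq M_2$, so the $\varepsilon$-cross term satisfies $\bigl|2\pi\varepsilon[\hat q(s_1)+\hat p(s_2)]\bigr|\leq 2\pi|\varepsilon|(M_1+M_2)$. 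Choosing $|\varepsilon|\leq b/8$ (compatibly with the earlier smallness requirements on $\varepsilon$) forces $2\pi|\varepsilon|(M_1+M_2)\leq \tfrac{\pi b}{4}(M_1+M_2)$, so the two quantities combine into a nonpositive contribution and drop out, leaving $R_1+R_2\leq 2C|h|$, which is the desired estimate after relabelling $C$.

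The main obstacle is the coupling between the two null components: the clean scalar-equation derivation of \eqref{R1bound} produces a ``wrong-species'' residue $-2\pi\varepsilon\hat q(s_1)$ whose sign we cannot control. The point of summing with $R_2$ and invoking the sign information from (6.2) is precisely that the diagonal terms $\hat p(s_1),\hat q(s_2)$ come with a good sign and dominate, for small $\varepsilon$, over the off-diagonal terms $\hat q(s_1),\hat p(s_2)$. This is the same small-$\varepsilon$ absorption mechanism already used in Lemma 3.2 and at the end of the proof of Lemma 5.3, and it is where the hypothesis that $\varepsilon$ be sufficiently small enters decisively.
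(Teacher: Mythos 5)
Your proposal is correct and follows essentially the same route as the paper: the identity $R_1=J-b\pi\hat p(s_1)-\varepsilon\pi\hat q(s_1)$ combined with Lemma 6.1 gives the paper's (6.13), the symmetric argument gives (6.14), and adding them with the sign information from (6.2) absorbs the coupled null terms for small $\varepsilon$. The only (harmless) difference is the last step: the paper keeps $|\varepsilon|<b/2$ and bounds the leftover by $\pi b(M_1+M_2)|h|\leq C|h|$ using $M_i\leq 2N_i$, whereas you take $|\varepsilon|\leq b/8$ so that the diagonal terms dominate and the whole correction becomes nonpositive.
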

\begin{proof}
  Noting that $\int_0^{\pi} \hat{p}(s_1-2x) \dif x = \int_0^{\pi} \hat{q}(s_1-2x) \dif x =0$, then by a direct computation we observe that
  \begin{eqnarray*}
  R_1 &=& J - \int_0^\pi \big[b\hat{p}(s_1) -b\hat{p}(s_1-2x) +\varepsilon \hat{q}(s_1) -\varepsilon \hat{q}(s_1-2x)\big]\dif x\\
     &=& J - b\pi \hat{p}(s_1)- \varepsilon \pi \hat{q}(s_1).
  \end{eqnarray*}
 Hence, with the help of Lemma 6.1 we have
  \begin{eqnarray}
    R_1\leq -b\pi \hat{p}(s_1) - 2\varepsilon \pi \hat{q}(s_1) + C|h|.
  \end{eqnarray}
  Similar to the derivation of the inequalities (6.5) and (6.13), we are able to get
  \begin{eqnarray}
    R_2\leq -b\pi \hat{q}(s_2) - 2\varepsilon \pi \hat{p}(s_2) + C|h|.
  \end{eqnarray}

  Selecting $|\varepsilon| <b/2$, we deduce from (6.2) and the definitions of $M_1$, $M_2$ that
  $$b \hat{p}(s_1)+2\varepsilon \hat{p}(s_2)\geq bM_1(1-|h|)-2|\varepsilon| M_1 >-bM_1|h|,
  $$
  $$b \hat{q}(s_2)+2\varepsilon \hat{q}(s_1)\geq bM_2(1-|h|)-2|\varepsilon| M_2 >-bM_2|h|.
  $$

  By virtue of the above two inequalities, then adding up (6.13) and (6.14) will yield that $R_1+R_2\leq C|h|$ for $|\varepsilon|<b/2$. That is what we desire.
  \end{proof}

 {\textbf{S{\scriptsize{TEP}} 3:  Now, we are in a position to prove (6.3).}}\par
  Under the assumptions of Theorem 1.5, we know $\phi(\tau)$, $\psi(\tau)$ are strictly increasing in $\tau$ and
$\phi(0)=0$, $\psi(0)=0$. Moreover, it follows from $f$, $g\in C^1$ that $\phi(\tau)$, $\psi(\tau)$ are Lipschitz continuous on any bounded intervals. \par
  By the definitions of $R_1$, $R_2$, $\phi$ and $\psi$, we conclude that
  \begin{eqnarray*}
   \phi(\hat{p}(s_1)-\hat{p}(s_1-2x)) &\leq& f(s_1-x, x, u_1(s_1-x, x)+p(s_1+h)-p(s_1-2x+h))\\
            &&-\ f(s_1-x, x, u_1(s_1-x, x)+p(s_1)-p(s_1-2x)).
  \end{eqnarray*}
Then integrating the above in $x$ leads to
  $$\int_0^\pi \phi(\hat{p}(s_1)-\hat{p}(s_1-2x))\dif x\leq R_1.
  $$
And a similar computation shows that $\int_0^\pi \psi(\hat{q}(s_2)-\hat{q}(s_2-2x))\dif x\leq R_2$.\par

   Therefore, we derive (6.3) from Lemma 6.2.\hfill $\Box$
    \subsubsection{Proof of (6.4)}

 \label{}\ \ \ \ We denote by  $\Sigma_1^c$, $\Sigma_2^c$ the complement spaces of $\Sigma_1$, $\Sigma_2$ respectively.\par
  For any $x\in [0,\pi]$, it follows from (6.2) and the choice of $M_1$ that
  $$\hat{p}(s_1)-\hat{p}(s_1 -2x)\geq M_1(1-|h|)-M_1= -M_1|h|.
  $$
Combining the above inequality with the facts of $\phi(0)=0$, $\phi$ is strictly increasing and Lipschitz continuous, we conclude that
  \begin{eqnarray*}
    \phi(\hat{p}(s_1)-\hat{p}(s_1-2x)) \geq \phi(-M_1|h|)-\phi(0)\geq -C|h|,\ \ {\rm for}\ x\in \Sigma_1^c.
  \end{eqnarray*}
Integrating the above in $x$ on $\Sigma_1^c$, we have
  \begin{eqnarray}
    -\int_{\Sigma_1^c} \phi(\hat{p}(s_1)-\hat{p}(s_1-2x))\dif x \leq C|h|.
  \end{eqnarray}

  On the other hand, concerning with the behavior of $\psi(\hat{q}(s_2)-\hat{q}(s_2-2x))$ restricted on $\Sigma_2^c$, we also get
  \begin{eqnarray}
    -\int_{\Sigma_2^c} \psi(\hat{q}(s_2)-\hat{q}(s_2-2x))\dif x \leq C|h|.
  \end{eqnarray}

  Hence, the inequalities of (6.3), (6.15) and (6.16) ensure that
  \begin{eqnarray*}
    &&\int_{\Sigma_1} \phi(\hat{p}(s_1)-\hat{p}(s_1-2x))\dif x +  \int_{\Sigma_2}\psi(\hat{q}(s_2)-\hat{q}(s_2-2x))\dif x\nonumber\\
    &=& \left(\int_0^\pi - \int_{\Sigma_1^c}\right)\phi(\hat{p}(s_1)-\hat{p}(s_1-2x))\dif x
     + \left(\int_0^\pi -\int_{\Sigma_2^c}\right)\psi(\hat{q}(s_2)-\hat{q}(s_2-2x))\dif x\nonumber\\[0.3em]
    &\leq& C|h|,
  \end{eqnarray*}
where $C>0$ is independent of $h$. Thus, we arrive at (6.4).\hfill $\Box$
\subsubsection{Complete the proof of Theorem 1.5}
 \label{}\ \ \ \ \  Finally, we examine the continuity of $p$ and $q$ by controlling the values of $\phi(M_1/2)$ and $\psi(M_2/2)$.\par

  Recalling the definitions of $\Sigma_1$, $\Sigma_2$ and the facts that $\phi(\tau)$, $\psi(\tau)$ are strictly increasing in
$\tau$, we deduce that $\phi(M_1/2)\leq \phi(\hat{p}(s_1)-\hat{p}(s_1-2x))$ for $x\in \Sigma_1$, and $\psi(M_2/2)\leq \phi(\hat{q}(s_2)-\hat{q}(s_2-2x))$ for $x\in \Sigma_2$. Thus, we infer from (6.4) that
\begin{eqnarray}
  \int_{\Sigma_1} \phi(\frac{M_1}{2})\dif x +  \int_{\Sigma_2}\psi(\frac{M_2}{2})\dif x \leq C|h|.
\end{eqnarray}

  We next claim that meas$\Sigma_1>0$ and meas$\Sigma_2>0$.\par
  Indeed, we follow from $\int_{0}^{2\pi}\hat{p}(\tau) \dif \tau=0$ that
   \begin{eqnarray}
     0=\int_{0}^{\pi}\hat{p}(s_1-2x) \dif x=\int_{\Sigma_1}\hat{p}(s_1-2x) \dif x + \int_{\Sigma_1^c}\hat{p}(s_1-2x) \dif x.
   \end{eqnarray}
 On the other hand, according to the definitions of $M_1$, $s_1$ and $\Sigma_1^c$, we deduce
  \begin{eqnarray}
    \hat{p}(s_1-2x) \geq \left\{\begin{array}{l}
      -M_1,\ \ {\rm for}\ x\in \Sigma_1,
    \\[1.2ex]
     \hat{p}(s_1)-\frac{M_1}{2} > \frac{M_1}{2}-M_1|h|,\ \ {\rm for}\ x\in \Sigma_1^c.
 \end{array} \right.
  \end{eqnarray}
Then, comparing with (6.18) and (6.19), we arrive at
   \begin{eqnarray*}
     0\geq -M_1 {\rm meas}\Sigma_1 +(\frac{1}{2}-|h|)M_1(\pi-{\rm meas}\Sigma_1)
     = -(\frac{3}{2}-|h|)M_1{\rm meas}\Sigma_1 + (\frac{1}{2}-|h|)M_1\pi,
   \end{eqnarray*}
 which implies that
    \begin{eqnarray*}
      {\rm meas}\Sigma_1>\frac{(1-2|h|)\pi}{3-2|h|}>\frac{\pi}{5}, \ \ {\rm as}\ |h|<\frac{1}{4}.
    \end{eqnarray*}

    A similar argument allows us to obtain ${\rm meas}\Sigma_2>\pi/5$, for $|h|<1/4$.\par
     Therefore, we make use of (6.17) and the preceding estimates for meas$\Sigma_1$, meas$\Sigma_2$ to get
    $$\phi(\frac{M_1}{2})+\psi(\frac{M_2}{2}) \leq C|h|.
    $$

  Furthermore, noting that $\phi(M_1/2)>0$, $\psi(M_2/2)>0$ and $\phi(\tau)$, $\psi(\tau)$ are strictly increasing in
$\tau$, we conclude that $M_1\rightarrow 0$ and $M_2\rightarrow 0$.
  Hence, it follows that $y= p(t+x)- p(t-x)\in C(\Omega)$ and $z= q(t+x)- q(t-x)\in C(\Omega)$. We finish the proof of Theorem 1.5.\hfill $\Box$


\end{document}